\def\Bbb{\mathbb}
\def\eea{\end{eqnarray*}}
\newtheorem{main}{Theorem}
\newtheorem{defn}{Definition}
\newtheorem{thm}{Theorem}
\newtheorem{prop}[thm]{Proposition}
\newtheorem{cor}[thm]{Corollary}
\newtheorem{lem}[thm]{Lemma}
\newtheorem{conj}[thm]{Conjecture}
\newtheorem{pro}[thm]{Problem}
\newenvironment{proof}{\medskip \noindent
{\bf Proof.}}{\hfill \rule{.5em}{1em}
\\}
\newenvironment{rmk}{\mbox{ }\\{\bf  Remark}\mbox{ }}{
\hfill $\Box$\mbox{}\bigskip}
\def\CP{{\mathbb C \mathbb P}}
\def \x {\times}
\def \eu{{\text{e}}}
\def \sign{{\text{sign}}}
\newcommand{\CPb}{\overline{\mathbb{CP}}{}^{2}}
\newcommand{\R}{\mathbb{R}}
\newcommand{\Z}{\mathbb{Z}}
\newcommand{\Q}{\mathbb{Q}}
\begin{document}

\title{Families of $4$-Manifolds with Nontrivial Stable Cohomotopy Seiberg-Witten Invariants, and Normalized Ricci Flow}

\author{R. \.Inan\c{c} Baykur and Masashi Ishida} 

\date{}

\maketitle

\begin{abstract}
In this article, we produce infinite families of $4$-manifolds with positive first betti numbers and meeting certain conditions on their homotopy and smooth types so as to conclude the non-vanishing of the stable cohomotopy Seiberg-Witten invariants of their connected sums. Elementary building blocks used in \cite{ishi-1} are shown to be included in our general construction scheme as well. We then use these families to construct the first examples of families of closed smooth $4$-manifolds for which Gromov's simplicial volume is nontrivial, Perelman's $\bar{\lambda}$ invariant is negative, and the relevant Gromov-Hitchin-Thorpe type inequality is satisfied, yet no non-singular solution to the normalized Ricci flow for any initial metric can be obtained. In \cite{fz-1}, Fang, Zhang and Zhang conjectured that the existence of any non-singular solution to the normalized Ricci flow on smooth $4$-manifolds with non-trivial Gromov's simplicial volume and negative Perelman's $\bar{\lambda}$ invariant implies the Gromov-Hitchin-Thorpe type inequality. Our results in particular imply that the converse of this fails to be true for vast families of $4$-manifolds.
\end{abstract}

\section{Introduction }\label{sec-0}

Let $X$ be a closed smooth Riemannian $4$-manifold $X$ with $b^{+}(X)>1$, where ${b}^+(X)$ denotes the dimension of the maximal positive definite linear subspace in the second cohomology of $X$. In what follows, $\eu(X)$ and $\sign(X)$ denote respectively the Euler characteristic and signature of $X$. Recall that a spin${}^{c}$-structure $\Gamma_{X}$ on $X$ induces a pair of spinor bundles ${S}^{\pm}_{\Gamma_{X}}$ which are Hermitian vector bundles of rank 2 over $X$. A Riemannian metric on $X$ and a unitary connection $A$ on the determinant line bundle ${\cal L}_{\Gamma_{X}} := det({S}^{+}_{\Gamma_{X}})$ induce the twisted Dirac operator ${\cal D}_{{A}} : \Gamma({S}^{+}_{\Gamma_{X}}) \longrightarrow \Gamma({S}^{-}_{\Gamma_{X}})$. The Seiberg-Witten monopole equations \cite{w} over $X$ are the following non-linear partial differential equations for a unitary connection $A$ of the complex line bundle ${\cal L}_{\Gamma_{X}}$ and a spinor $\phi \in \Gamma({S}^{+}_{\Gamma_{X}})$:
\begin{eqnarray*}
{\cal D}_{{A}}{\phi} = 0, \ {F}^{+}_{{A}} = iq(\phi),  
\end{eqnarray*}
here ${F}^{+}_{{A}}$ is the self-dual part of the curvature of $A$ and $q : {S}^{+}_{\Gamma_{X}} \rightarrow {\wedge}^{+}$ is a certain natural real-quadratic map, where ${\wedge}^{+}$ is the bundle of self-dual 2-forms. The quotient space of the set of solutions to the Seiberg-Witten monopole equations by gauge group is called the Seiberg-Witten moduli space. In his celebrated article \cite{w}, Witten introduced an invariant of smooth $4$-manifolds by using the fundamental homology class of the Seiberg-Witten moduli space, which is now called the \textit{Seiberg-Witten invariant}, and is well-defined for any closed $4$-manifold $X$ with $b^{+}(X)>1$. The Seiberg-Witten invariant defines an integer valued function $SW_{X}$ over the set of all isomorphism classes of spin${}^c$ structures of $X$ with $b^{+}(X)>1$. \par 

More recently, Bauer and Furuta \cite{b-f, b-1} adopted a remarkable approach to introduce a refinement of $SW_{X}$ without using the Seiberg-Witten moduli space. They introduced a new invariant, which takes values in a certain stable cohomotopy group ${\pi}^{{b}^+}_{S^1, \mathcal{B}}({\rm Pic}^0(X), {\rm ind} D)$, where ${b}^+:={b}^+(X)$  and ${\rm ind} D$ is the virtual index bundle for the Dirac operators parametrized by the $b_{1}(X)$-dimensional Picard torus ${\rm Pic}^0(X)$. This invariant is called the \textit{stable cohomotopy Seiberg--Wittten invariant}, and herein will be denoted as: 
\begin{eqnarray*}\label{b-f-inv}
BF_{X}(\mathfrak{s}) \in {\pi}^{{b}^+}_{S^1, \mathcal{B}}({\rm Pic}^0(X), {\rm ind} D).
\end{eqnarray*}
Moreover, in \cite{b-1} Bauer proved a non-vanishing theorem of $BF_{*}$ for a connected sum of $4$-manifolds with $b^{+} > 1$ and $b_{1} =0$ \cite{b-1} subject to a couple of conditions (See the paragraph following Theorem \ref{new-BF-non-vanishing} below for the precise conditions), and used this theorem to show that there are  $4$-manifolds that appear as such connected sums, for which $SW_{*}$ is trivial but $BF_{*}$ is not. In particular, $BF_{*}$ is a strictly stronger invariant than $SW_{*}$. \par

In \cite{ishi-1}, H. Sasahira and the second author of the current article generalized Bauer's non-vanishing theorem by removing the condition $b_{1}=0$ for the summands. For this new non-vanishing theorem, which is now formulated for $4$-manifolds with arbitrary $b_1$, the following definition was needed to constrain the cohomology group of the $4$-manifold:

\begin{defn}[\cite{ishi-1}]\label{BF-third-condition}
Let $X$ be any closed oriented smooth $4$-manifold with ${b}^{+}(X)>1$. Let $\Gamma_{X}$ be a spin${}^{c}$ structure on $X$. Let ${c}_{1}({\cal L}_{{\Gamma}_{X}})$ be the first Chern class of the complex line bundle ${\cal L}_{{\Gamma}_{X}}$ associated with $\Gamma_{X}$. Finally, let ${\frak e}_{1}, {\frak e}_{2}, \cdots, {\frak e}_{s}$ be a set of generators of ${H}^{1}(X, {\mathbb Z})$, where $s={b}_{1}(X)$. Then, define 
\begin{eqnarray*}\label{spin-01}
\frak{S}^{ij}(\Gamma_X):=\frac{1}{2}< c_1(L_{\Gamma_X}) \cup \frak{e}_i \cup \frak{e}_j, [X] >, 
\end{eqnarray*}
where $[X]$ is the fundamental class of $X_i$ and $<\cdot, \cdot >$ is the pairing between cohomology and homology. 
\end{defn}

\noindent We can now introduce the notion of \textit{BF-admissibility} for a $4$-manifold, as discussed in \cite{ishi-1}:
\begin{defn}\label{BF-adm-def}
A closed oriented smooth $4$-manifold $X$ with $b^{+}(X) > 1$ is called BF-admissible if the following three conditions are satisfied.
\begin{enumerate}
\item There exists a spin${}^{c}$-structure $\Gamma_{X}$ with $SW_{X}(\Gamma_{X}) \equiv 1 \ (\ \bmod \ 2)$ and \linebreak $c^{2}_{1}({\cal L}_{\Gamma_{X}}) = 2 \eu(X) + 3 \sign(X)$, where $c_{1}({\cal L}_{\Gamma_{X}})$ is the first Chern class of ${\cal L}_{\Gamma_{X}}$. 
\item ${b}^{+}(X)-{b}_{1}(X) \equiv 3 \ (\bmod \ 4)$. 
\item $\frak{S}^{ij}(\Gamma_{X}) \equiv 0 \ (\bmod \ 2) \ \text{for all i, j}$.
\end{enumerate}  
\end{defn}
Notice that, under the first condition in Definition \ref{BF-adm-def}, any 4-manifold $X$ possesses an almost complex structure $J$ with $c_{1}(X, J) = c_{1}({\cal L}_{\Gamma_{X}})$ by a result of Wu \cite{wu}. Hence, any BF-admissible 4-manifold must be almost complex. \par
The new non-vanishing theorem for the stable cohomotopy Seiberg-Witten invariant can be stated as follows: 
\begin{thm}[\cite{ishi-1}]\label{new-BF-non-vanishing}
For $i=1,2,3$, let $X_i$ be a BF-admissible, closed oriented smooth $4$-manifold. Then the connected sum $\#^{j}_{i=1}{X}_{i}$ has a non-trivial stable cohomotopy Seiberg-Witten invariant, where $j=2,3$. 
\end{thm}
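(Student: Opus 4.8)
The plan is to deduce Theorem~\ref{new-BF-non-vanishing} from Bauer's connected-sum non-vanishing theorem \cite{b-1} by upgrading the argument there to a family statement over the Picard tori, the latter being forced on us because the summands may now have $b_1>0$. The first step is the gluing formula for the Bauer--Furuta invariant: writing $X=\#_{i=1}^{j}X_i$, one has $\mathrm{Pic}^0(X)\cong\prod_i\mathrm{Pic}^0(X_i)$, the virtual index bundle $\mathrm{ind}\,D$ over $\mathrm{Pic}^0(X)$ is the external sum of the $\mathrm{ind}\,D_i$, and, after the canonical identification of the ambient $S^1$-representation spaces and a suspension, a finite-dimensional approximation of $BF_X(\#_i\Gamma_{X_i})$ is the fibrewise smash product, over $\prod_i\mathrm{Pic}^0(X_i)$, of finite-dimensional approximations of the $BF_{X_i}(\Gamma_{X_i})$; this is the family version of Bauer's gluing construction, the new content being bookkeeping for the torus factors. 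Condition (1) of Definition~\ref{BF-adm-def}, $c_1^2({\cal L}_{\Gamma_{X_i}})=2\eu(X_i)+3\sign(X_i)$, is used exactly as in the $b_1=0$ case: it makes the Seiberg--Witten moduli space zero-dimensional, so that a mod $2$ relative degree associated with the free part of $BF_{X_i}$ is defined and equals $SW_{X_i}(\Gamma_{X_i})\equiv 1$.

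Next I would reduce the non-vanishing of this smash product to a computation in the target group away from the $S^1$-fixed set. On $S^1$-fixed points the smash product becomes, once $j\ge 2$, undetectable (the fixed-point restrictions of the $BF_{X_i}$ multiply to a vanishing power of the relevant generator of the stable stems), so the real content is in the free part; passing to the Borel construction converts the question into whether a certain map of Thom spaces over $BS^1\times T^{b_1}$, with $T^{b_1}=\prod_i\mathrm{Pic}^0(X_i)$, is detected, say in $\mathbb{Z}/2$-cohomology or in equivariant $K$-theory. Here condition (3), that $\mathfrak{S}^{ab}(\Gamma_{X_i})\equiv 0\pmod 2$ for all $a,b$, is precisely what keeps the Picard tori from interfering: it says that the mod $2$ characteristic classes in $H^{*}(T^{b_1};\mathbb{Z}/2)$ assembled from the pairings $\langle c_1({\cal L}_{\Gamma_{X_i}})\cup\mathfrak{e}_a\cup\mathfrak{e}_b,[X_i]\rangle$ vanish, so that each $\mathrm{ind}\,D_i$ is, modulo $2$ and after suspension, pulled back from a point and the fibrewise degree above is unobstructed; the $j$-fold smash product then has fibrewise mod $2$ degree $\prod_i SW_{X_i}(\Gamma_{X_i})\equiv 1\pmod 2$.

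It remains to see that this nonzero mod $2$ degree survives into the stable cohomotopy group, and this is where condition (2) and Bauer's algebraic input enter. The hypothesis $b^{+}(X_i)-b_{1}(X_i)\equiv 3\pmod 4$ plays the role that ``$b^{+}\equiv 3\pmod 4$'' plays in \cite{b-1}: it fixes the parity of the number of real versus complex summands in the ambient representation, so that the mod $2$ class $\kappa$ through which the smash product factors carries the $\mathrm{Pin}(2)$-type symmetry that makes its powers detectable. One then invokes, as in \cite{b-1}, the algebraic fact that $\kappa^{2}\ne 0$ and $\kappa^{3}\ne 0$ in the graded ring in question, so the $j$-fold smash product, whose mod $2$ shadow is $\kappa^{j}$ times the odd integer $\prod_i SW_{X_i}(\Gamma_{X_i})$, is nonzero for $j=2,3$, whence $BF_{\#_{i=1}^{j}X_i}\ne 0$; the analogous argument fails for larger $j$ because of a nilpotence relation satisfied by $\kappa$, paralleling the relation $\eta^{4}=0$ in the stable stems, which is why the statement is confined to $j\le 3$. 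I expect the principal obstacle to be this last step carried out in the presence of the Picard-torus parameters: one must verify that smashing in the extra torus directions---controlled, as above, by condition (3)---does not annihilate $\kappa^{2}$ or $\kappa^{3}$, and that the family refinement of Bauer's detection homomorphism stays multiplicative under the connected-sum smash product.
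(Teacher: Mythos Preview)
This paper does not contain a proof of Theorem~\ref{new-BF-non-vanishing}; the result is quoted verbatim from \cite{ishi-1} and used thereafter as a black box. So there is no ``paper's own proof'' to compare your proposal against here.

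That said, your outline is broadly in the spirit of how \cite{ishi-1} proceeds: a connected-sum formula expressing $BF_X$ as a fibrewise smash product over $\prod_i\mathrm{Pic}^0(X_i)$, with condition~(1) providing an odd mod~$2$ degree on the free part, condition~(2) fixing the suspension bidegree so that the relevant nilpotence bound (the analogue of $\eta^4=0$) is not reached for $j\le 3$, and condition~(3) trivializing, mod~$2$, the twisting of $\mathrm{ind}\,D_i$ over the Picard torus so that the detection argument from \cite{b-1} goes through fibrewise. The place where your sketch is thinnest is exactly where you flag it: the assertion that condition~(3) makes $\mathrm{ind}\,D_i$ ``pulled back from a point mod~$2$'' and that this suffices for the family detection map to remain multiplicative is the genuine technical content of \cite{ishi-1}, and your paragraph does not yet supply the mechanism (one has to identify the mod~$2$ Chern classes of the index bundle over $\mathrm{Pic}^0(X_i)$ with the $\mathfrak{S}^{ab}$, and then check that their vanishing propagates correctly through the equivariant Thom-space computation). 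If you intend to reconstruct the proof rather than cite it, that step needs to be made precise; otherwise, simply citing \cite{ishi-1} as the present paper does is appropriate.
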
 

Observe that, when ${b}_{1}(X_{i}) = 0$, the second condition for BF-admissibility just reads as ${b}^{+}(X) \equiv 3 \ (\bmod \ 4)$ and the third one holds trivially. That is, Theorem \ref{new-BF-non-vanishing} when ${b}_{1}(X_{i}) = 0$ for all summands is nothing but Bauer's non-vanishing theorem from \cite{b-1}, and therefore can be regarded as a natural generalization of the latter. \par

In order to apply this new non-vanishing theorem of stable cohomotopy Seiberg-Witten invariant to geometry and topology of smooth $4$-manifolds, it is essential to find BF-admissible $4$-manifolds. Of particular interest was to find BF-admissible $4$-manifolds with $b_1 \neq 0$, so as to get new applications that does not follow from Bauer's original non-vanishing theorem stated for $b_1=0$. In \cite{ishi-1}, two types of $4$-manifolds were seen to be BF-admissible: Products $\Sigma_{g} \times \Sigma_{h}$ of two Riemann surfaces of odd genera, and primary Kodaira surfaces. Failing to get other examples of $4$-manifolds with $b_{1} > 0$ satisfying the BF-axioms, the authors raised the following problem in the same work \cite{ishi-1}:

\begin{pro}[Problem 75 in \cite{ishi-1}]\label{pro-75}
Find BF-admissible, closed oriented $4$-manifolds with $b_{1}> 0$, which are not primary Kodaira surfaces or products $\Sigma_{g} \times \Sigma_{h}$ of Riemann surfaces with odd genera.   
\end{pro}

In the first part of our article, we will answer this problem by showing the existence of vast families of BF-admissible $4$-manifolds with $b_{1} > 0$. Moreover, we will see that these families naturally include products $\Sigma_{g} \times \Sigma_{h}$ and primary Kodaira surfaces. The main surgical operation involved in these constructions is the \textit{Luttinger surgery} along Lagrangian tori \cite{Lut}, defined and discussed in detail in Subsection \ref{luttinger} below. \par

In Subsection \ref{sec-2}, we will introduce the notion of \textit{surgered product manifolds} which are obtained from products $\Sigma_{g} \times \Sigma_{h}$ via Luttinger surgeries along certain homologically essential Lagrangian tori. Note that $\Sigma_{g} \times \Sigma_{h}$ are the trivial examples of surgered product manifolds. We will prove that: 

\begin{main}\label{main-A}
Let $\Sigma_{g} \times \Sigma_{h}$ be the product of two Riemann surfaces of odd genera $g, h$, equipped with the product symplectic form. Then any surgered product manifold obtained from $\Sigma_{g} \times \Sigma_{h}$ with $b_{1} > 0$ is BF-admissible. Moreover, and primary Kodaira surface is a surgered product manifold obtained from $T^2 \x T^2$, and is BF-admissible.
\end{main}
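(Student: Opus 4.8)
The plan is to verify the three conditions of Definition~\ref{BF-adm-def} for an arbitrary surgered product manifold $X'$ built from $\Sigma_{g}\times\Sigma_{h}$ with $g,h$ odd and $b_{1}(X')>0$, and then to realize the primary Kodaira surface as such a manifold. Two structural facts about Luttinger surgery \cite{Lut} will carry most of the weight: it produces a symplectic $4$-manifold, and --- being a torus surgery, i.e.\ the excision of $T^{2}\times D^{2}$ followed by a regluing --- it changes neither the Euler characteristic nor the signature. Hence $\eu(X')=\eu(\Sigma_{g}\times\Sigma_{h})=4(g-1)(h-1)$ and $\sign(X')=0$, and throughout we restrict to those surgered product manifolds with $b^{+}(X')>1$ (this is required for BF-admissibility in any case, and holds automatically once $g,h\ge3$).

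With this in hand, conditions (1) and (2) are essentially formal. For (1): since $X'$ is symplectic with $b^{+}(X')>1$, Taubes' theorem gives $SW_{X'}(\Gamma_{X'})=\pm1\equiv1\pmod2$ for the canonical spin${}^{c}$ structure $\Gamma_{X'}$, and since every almost complex $4$-manifold satisfies $c_{1}^{2}=2\eu+3\sign$ we obtain $c_{1}^{2}({\cal L}_{\Gamma_{X'}})=c_{1}(X')^{2}=2\eu(X')+3\sign(X')$ for free. For (2): for any closed oriented $4$-manifold one has $b^{+}-b_{1}=\tfrac12(\eu+\sign)-1$, so here $b^{+}(X')-b_{1}(X')=2(g-1)(h-1)-1$; since $g$ and $h$ are odd, $(g-1)(h-1)$ is divisible by $4$, whence $b^{+}(X')-b_{1}(X')\equiv-1\equiv3\pmod4$. (Taking $X'=\Sigma_{g}\times\Sigma_{h}$, these two steps already reprove the BF-admissibility recorded in \cite{ishi-1}.)

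The real work is condition (3): one must show $\langle c_{1}({\cal L}_{\Gamma_{X'}})\cup\frak{e}_{i}\cup\frak{e}_{j},[X']\rangle\equiv0\pmod4$ for a generating set $\frak{e}_{1},\dots,\frak{e}_{s}$ of $H^{1}(X';\Z)$. The plan rests on two points. First, the Lagrangian tori used to build a surgered product manifold may be taken to be products $\gamma_{1}\times\gamma_{2}$ of simple closed curves in the two factors (these are automatically Lagrangian for the product form); since there are only finitely many of them, one can choose base points $p_{0}\in\Sigma_{g}$ and $q_{0}\in\Sigma_{h}$ avoiding all of them, so that the two factor surfaces $\Sigma_{g}\times\{q_{0}\}$ and $\{p_{0}\}\times\Sigma_{h}$ lie in the complement of the surgery region and descend to symplectic surfaces in $X'$ of genera $g$ and $h$, with self-intersection $0$ and mutual intersection $1$. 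Second, Luttinger surgery is supported near a Lagrangian torus $T$ over which $c_{1}$ restricts trivially, so --- via a Mayer--Vietoris analysis carried through the successive surgeries, together with the adjunction equality on these two surviving surfaces --- the cup products of $c_{1}(X')$ with classes in $H^{1}(X';\Z)$ should be forced to agree with the corresponding cup products computed on the unsurgered product $\Sigma_{g}\times\Sigma_{h}$. On $\Sigma_{g}\times\Sigma_{h}$, $c_{1}$ is an integral combination of the Poincar\'e duals of the two factor surfaces with coefficients $2-2g$ and $2-2h$, both divisible by $4$ because $g$ and $h$ are odd; hence every triple product $\langle c_{1}({\cal L}_{\Gamma_{X'}})\cup\frak{e}_{i}\cup\frak{e}_{j},[X']\rangle$ is divisible by $4$ and $\frak{S}^{ij}(\Gamma_{X'})\equiv0\pmod2$. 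I expect this to be the crux of the whole argument: keeping simultaneous control of the canonical class and of a generating set for $H^{1}(X';\Z)$ through a composition of Luttinger surgeries --- each of which perturbs $H_{1}$ and, dually, a piece of $H^{2}$ --- while the odd-genus hypothesis is precisely what supplies the $4$-divisibility needed to close the congruence.

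For the primary Kodaira surface, the plan is to invoke (or re-derive via an explicit picture) the fact that it --- a $T^{2}$-bundle over $T^{2}$ --- arises from $T^{4}=T^{2}\times T^{2}$ by a single Luttinger surgery along a Lagrangian torus. It is then a surgered product manifold built from $T^{2}\times T^{2}$ with $b_{1}=3>0$, so its BF-admissibility follows from the first part. As a consistency check one can also verify the three conditions directly: it is symplectic with $b^{+}=2>1$; it has $b^{+}-b_{1}=-1\equiv3\pmod4$; and its canonical bundle is trivial, so $c_{1}=0$ and condition (3) holds vacuously --- in agreement with \cite{ishi-1}.
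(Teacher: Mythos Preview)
Your handling of conditions (1) and (2) is correct; in fact your verification of (2) via the identity $b^{+}-b_{1}=\tfrac12(\eu+\sign)-1$, together with the invariance of $\eu$ and $\sign$ under torus surgery, is cleaner than the paper's argument, which tracks $b^{+}$ and $b_{1}$ separately through each surgery. Your treatment of the primary Kodaira surface is likewise essentially the same as the paper's.

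The gap is in condition (3). Your proposed route --- push $c_{1}(X')$ back to $c_{1}(\Sigma_{g}\times\Sigma_{h})$ through a Mayer--Vietoris argument and then invoke the $4$-divisibility of $2-2g$ and $2-2h$ --- is not actually carried out, and you flag this yourself as the ``crux''. The difficulty is genuine: after each surgery both $H^{1}$ and $H^{2}$ change, and matching the triple cup products on $X'$ with those on the product requires knowing the pairing of $c_{1}(X')$ not just with the two surviving factor surfaces (which adjunction gives you) but with \emph{all} of $H_{2}(X')$, in particular with the surviving product tori. Adjunction on $\Sigma_{g}\times\{q_{0}\}$ and $\{p_{0}\}\times\Sigma_{h}$ alone does not pin down $c_{1}(X')$ as a cohomology class, so the phrase ``should be forced to agree'' hides exactly the computation you need.

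The paper sidesteps this entirely with a direct geometric observation. The surviving generators of $H^{1}(X';\Z)$ are duals of a subset of the curves $a_{i},b_{i},c_{j},d_{j}$, and the Poincar\'e dual of $\frak{e}_{i}\cup\frak{e}_{j}$ is then either homologically trivial (when both classes come from the same factor) or represented by one of the product tori $a_{i}\times c_{j}$, $a_{i}\times d_{j}$, $b_{i}\times c_{j}$, $b_{i}\times d_{j}$ that survives in $X'$. These tori remain \emph{Lagrangian} in $(X',\omega_{X'})$: the canonical class of the product can be supported away from them \cite{ADK}, and the Luttinger surgeries are performed away from any fixed such torus. Since the canonical bundle restricts trivially over a Lagrangian torus, $\langle c_{1}(X'),[T]\rangle=0$, and hence $\frak{S}^{ij}(\Gamma_{X'})=0$ outright --- no congruence, no Mayer--Vietoris bookkeeping, and no need to identify $c_{1}(X')$ globally. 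This is the key idea you are missing.
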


In \cite{ABBKP}, Akhmedov, Baldridge, Kirk, D. Park, and the first author of the current article, showed that a very large portion of the symplectic geography plane could be populated with minimal symplectic $4$-manifolds. In Subsection \ref{sec-3}, we will make use of these examples, while paying attention to preserving BF-admissibility during the employed surgical operations, to prove the following:

\begin{main}\label{main-B}
Let $a$ and $b$ are integers satisfying $2 a + 3 b \geq 0$, $a + b \equiv 0 \ (\bmod \ 8)$, and $b < -1$ is satisfied. Set as $\alpha = {(a + b)}/{2}$ and $\beta = {(a - b)}/{2}$. Then, there exists a BF-admissible, irreducible symplectic $4$-manifold with fundamental group $\mathbb Z$ which is homeomorphic to 
\begin{eqnarray}\label{irr-home-1}
\alpha {\mathbb C}{P}^{2} \# \beta \overline{{\mathbb C}{P}^{2}} \# ({S}^{1} \times {S}^{3})
\end{eqnarray}
and a BF-admissible, irreducible symplectic $4$-manifold with fundamental group ${\mathbb Z}_{p}$, $p$ odd, which is homeomorphic to 
\begin{eqnarray}\label{irr-home-2}
(\alpha -1) {\mathbb C}{P}^{2} \# (\beta -1) \overline{{\mathbb C}{P}^{2}} \# Y_{p}, 
\end{eqnarray}
where $Y_{p}$ is the $4$-manifold with fundamental group ${\mathbb Z}_{p}$, obtained from the product $L(p, 1) \times S^{1}$ of Lens space $L(p, 1)$ and $S^{1}$ after a $0$ surgery along $\{ pt \} \times S^{1}$. 
\end{main}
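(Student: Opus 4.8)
The plan is to build the claimed $4$-manifolds starting from the large pool of minimal symplectic $4$-manifolds provided by \cite{ABBKP} and then to verify, step by step, that each of the three conditions in Definition \ref{BF-adm-def} survives. First I would recall that for every pair of integers $(\alpha,\beta)$ in the stated geography region, \cite{ABBKP} produces an irreducible symplectic $4$-manifold $Z$ with $b^+>1$, prescribed $(c_1^2,\chi_h)$ corresponding to a homeomorphism type built from $\alpha\,\C P^2\#\beta\,\CPb$, and moreover with $\pi_1(Z)$ either trivial or any prescribed finitely presented group, realized through controlled symplectic sums and Luttinger surgeries. The two target fundamental groups $\Z$ and $\Z_p$ ($p$ odd) are obtained by taking the fiber sum / Luttinger surgery constructions so that $\pi_1$ becomes exactly $\Z$ (adding an $S^1\times S^3$ summand in homology, explaining \eqref{irr-home-1}) or exactly $\Z_p$ (introducing the $Y_p$ piece coming from $L(p,1)\times S^1$ with a $0$-surgery, explaining \eqref{irr-home-2}); the translation of the numerical hypotheses $2a+3b\ge 0$, $a+b\equiv 0\ (\bmod\ 8)$, $b<-1$ into the allowed $(\alpha,\beta)$ is exactly the bookkeeping $\alpha=(a+b)/2$, $\beta=(a-b)/2$, and the congruence $a+b\equiv 0\ (\bmod\ 8)$ is precisely what is needed to make $b^+-b_1\equiv 3\ (\bmod\ 4)$ hold.

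Next I would handle the Seiberg–Witten input, i.e.\ condition (1). Since $Z$ is a minimal symplectic $4$-manifold with $b^+>1$, Taubes' theorem gives $SW_Z(\Gamma_{can})=\pm 1$ for the canonical spin${}^c$ structure, whose determinant line has $c_1^2(\mathcal L_{\Gamma_{can}}) = c_1^2(Z) = 2\eu(Z)+3\sign(Z)$; reducing mod $2$ gives $SW_Z(\Gamma_{can})\equiv 1$. The subtlety is that the Luttinger surgeries used to cut the fundamental group down to $\Z$ or $\Z_p$ must be performed along Lagrangian tori, so that the result is again symplectic and minimal, and Taubes applies again; this is exactly the point emphasized in the passage before Theorem \ref{main-B} about "paying attention to preserving BF-admissibility during the employed surgical operations." So condition (1) is essentially automatic once one stays inside the symplectic category.

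Condition (3) — the vanishing mod $2$ of $\frak S^{ij}(\Gamma)=\tfrac12\langle c_1(\mathcal L_\Gamma)\cup\frak e_i\cup\frak e_j,[Z]\rangle$ — is where the real work lies, and I expect this to be the main obstacle. For the $\pi_1=\Z$ case, $b_1=1$, so there is a single generator $\frak e_1$ of $H^1(Z;\Z)\cong\Z$, and $\frak e_1\cup\frak e_1=0$ since it is a degree-$1$ class (cup-square of a $1$-class is $2$-torsion, and here $H^2$ is torsion-free so it is $0$), forcing $\frak S^{11}=0$ trivially. For the $\pi_1=\Z_p$ case with $p$ odd, $b_1=0$ again (since $\Z_p$ is finite), so $H^1(Z;\Z)=0$ and condition (3) is vacuous. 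Thus the genuine content is to ensure that the surgery constructions really do yield $b_1\le 1$ with the torsion part of $H^1$ (with $\Z$ coefficients) not interfering: concretely one must track the abelianization through each Luttinger surgery and symplectic sum, using that Luttinger surgery kills a specified $\pi_1$-generator while changing $b_1$ by at most one, and that $H_1(Y_p)=\Z_p$ is pure torsion. Condition (2) then follows from the congruence arithmetic above: $b^+(Z) = 2\chi_h(Z)-1$ with $\chi_h = (\alpha+1)$ in the first case and appropriate shifts in the second, and $a+b\equiv 0\ (\bmod\ 8)$ is calibrated to push $b^+ - b_1$ into the residue class $3\ (\bmod\ 4)$.

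To finish, I would assemble these pieces: pick the \cite{ABBKP} manifold realizing the desired homeomorphism type with $\pi_1=\Z$ (resp.\ $\Z_p$), verify it is minimal and symplectic so (1) holds by Taubes, verify (2) by the mod-$8$ hypothesis on $a+b$, and verify (3) using $b_1\le 1$ and the $1$-dimensional (or $0$-dimensional) cohomology computation; irreducibility follows from minimality plus $b^+>1$ via the Hamilton–Kotschick–type argument, and the homeomorphism type is pinned down by Freedman's theorem once one knows $\pi_1$, the intersection form, and the Kirby–Siebenmann invariant, all of which are inherited from the symplectic sum description. The only place where one might need to be careful is to arrange simultaneously (a) $b_1=1$ exactly (not $0$) in the $\Z$ case, so that $S^1\times S^3$ genuinely appears, while (b) keeping the manifold minimal; this is done by choosing the final Luttinger surgery to kill all of $\pi_1$ except one $\Z$-factor, which is the standard trick in this circle of constructions.
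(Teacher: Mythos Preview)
Your overall strategy matches the paper's: start from the simply-connected minimal symplectic ABBKP manifolds, undo (or replace) one final Luttinger surgery to arrange $\pi_1=\Z$ or $\Z_p$, and then check BF-admissibility. The paper makes the construction precise via the \emph{telescoping triple} structure built into \cite{ABBKP}: each ABBKP manifold $X'$ arises from a telescoping triple $(X,T_1,T_2)$ by a $\pm 1$ Luttinger surgery along $T_2$, and undoing that surgery (resp.\ replacing it by a $1/p$ surgery) yields $Z$ with $\pi_1=\Z$ (resp.\ $Z_p$ with $\pi_1=\Z_p$). Your description of this step is vaguer but points in the same direction.

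Your verification of condition (3) is genuinely different from, and slicker than, the paper's. The paper appeals to its Theorem~\ref{undoingABBKP}, which tracks the Lagrangian-tori structure through the whole construction so that every $\frak e_i\cup\frak e_j$ is Poincar\'e dual to a Lagrangian class and hence pairs trivially with $c_1$. Your observation that $b_1\le 1$ already forces $\frak e_1\cup\frak e_1=0$ (graded commutativity gives $2$-torsion, and $H^2$ is torsion-free since $H_1=\Z$ or $\Z_p$) dispatches condition (3) in one line for both families, without touching the symplectic geometry. That is a real simplification.

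Two points in your outline are not yet proofs, though. First, the homeomorphism identification cannot be done by ``Freedman's theorem'': Freedman classifies simply-connected topological $4$-manifolds only. For $\pi_1=\Z$ one needs the Hambleton--Teichner classification (Theorem~\ref{HTthm}), and for finite cyclic $\pi_1$ the Hambleton--Kreck classification (Theorem~\ref{HKthm}); the paper invokes both explicitly, together with the check $b_2-|\sign|\ge 6$. Second, minimality of $Z$ and $Z_p$ does not follow merely from ``Luttinger surgery stays in the symplectic category.'' The paper argues that undoing one Luttinger surgery in an ABBKP manifold amounts to replacing one surgered-product building block by another surface bundle over a surface (with trivial $\pi_2$), so the Usher minimality argument used for the original ABBKP manifolds still applies verbatim; only then does minimality plus residually finite $\pi_1$ give irreducibility via \cite{h-ko}. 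You should fill in these two steps.
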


Note that these symplectic $4$-manifolds are not brand new; they are produced using the families of \cite{ABBKP}, and were studied in \cite{RT}. The new key observation is that, under the mild condition $a + b \equiv 0 \ (\bmod \ 8)$, they are all BF-admissible. \par

Combining Theorems \ref{new-BF-non-vanishing}, \ref{main-A}, and \ref{main-B}, we conclude that vast families that consist of connected sums of $4$-manifolds with $b_1 > 0$ have non-trivial stable cohomotopy Seiberg-Witten invariants. The existence of such families of connected sums enables us to give several new application regarding the geometry and topology of smooth $4$-manifolds, which we present in the second part of our article. \par

It is known that connected sums of manifolds equipped with positive scalar curvature metrics admit such metrics as well \cite{G-L, S-Y}. Also known is that positive scalar curvature metric is stable under codimension $q \geq 3$ surgeries \cite{G-L, S-Y}. These results imply that the connected sums (\ref{irr-home-1}) and (\ref{irr-home-2}) admit positive scalar curvature metrics with respect to their standard smooth structures. Importantly, it means that stable cohomotopy Seiberg-Witten invariants of the connected sums of $4$-manifolds given in (\ref{irr-home-1}) and (\ref{irr-home-2}) above, equipped with standard smooth structures, vanish. This fact, together with Theorem \ref{new-BF-non-vanishing} and Theorem \ref{main-B}, allows us to prove the existence of pairwise homeomorphic but not diffeomorphic $4$-manifolds \textit{with trivial Seiberg-Witten invariants}. Namely, we get \textit{exotic} copies of standard $4$-manifolds which are connected sums of $\CP^2, \CPb, S^1 \x S^3, Y_p$, with trivial Seiberg-Witten invariants but non-trivial stable cohomotopy Seiberg-Witten invariants. 

\begin{cor}
For $i=1,2,3$, let $X_{i}$ be any one of the $4$-manifolds given in Theorem \ref{main-B}. Then any connected sum $\#^{j}_{i=1}{X}_{i}$ admits an exotic smooth structure, for $j=2,3$.   
\end{cor}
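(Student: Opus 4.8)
The plan is to realize each $X_i$ as one member of a homeomorphism class that also contains a standard connected sum carrying a positive scalar curvature metric, and then play off the two non-vanishing/vanishing facts already at our disposal. Concretely, for fixed $j \in \{2,3\}$ and $4$-manifolds $X_1,\dots,X_j$ chosen from those in Theorem \ref{main-B}, I would first note that each $X_i$ is BF-admissible by Theorem \ref{main-B}, so by Theorem \ref{new-BF-non-vanishing} the connected sum $\#^{j}_{i=1}X_i$ has non-trivial stable cohomotopy Seiberg-Witten invariant; in particular it cannot admit a metric of positive scalar curvature, since $BF_{*}\neq 0$ obstructs such a metric exactly as $SW_{*}\neq 0$ does in the classical case.

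Next I would identify the homeomorphism type. Each $X_i$ is, by Theorem \ref{main-B}, homeomorphic to a standard connected sum of copies of $\CP^2$, $\CPb$, and either $S^1\times S^3$ or $Y_p$. Since connected sum is compatible with homeomorphism, $\#^{j}_{i=1}X_i$ is homeomorphic to the corresponding standard connected sum $Z := \#^{j}_{i=1}Z_i$, where each $Z_i$ is the model connected sum from \eqref{irr-home-1} or \eqref{irr-home-2}. Here I would invoke the discussion in the paragraph preceding the corollary: the manifolds $\CP^2$, $\CPb$, $S^1\times S^3$, and $Y_p$ all admit positive scalar curvature metrics — $Y_p$ because it is obtained from $L(p,1)\times S^1$, which carries a product PSC metric, by a codimension-$3$ (equivalently, $0$-framed circle) surgery, under which PSC is preserved by \cite{G-L, S-Y} — and since PSC is inherited under connected sums by \cite{G-L, S-Y}, the standard model $Z$ carries a PSC metric. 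Hence $BF_{Z}(\mathfrak{s})=0$ for all $\mathfrak{s}$.

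Finally I would combine the two halves: $\#^{j}_{i=1}X_i$ and $Z$ are homeomorphic, but the former has non-trivial and the latter trivial stable cohomotopy Seiberg-Witten invariants, so they are not diffeomorphic. Since $Z$ carries the standard smooth structure on this topological manifold, $\#^{j}_{i=1}X_i$ is an exotic copy of it; equivalently, $\#^{j}_{i=1}X_i$ admits \emph{at least two} smooth structures (its own and the standard one), so it admits an exotic smooth structure. I would remark in passing that this also furnishes the promised examples of homeomorphic but non-diffeomorphic $4$-manifolds with \emph{trivial} ordinary Seiberg-Witten invariants, since $SW_{*}$ vanishes on any connected sum of pieces each with $b^+\ge 1$ — but that sharper statement is not needed for the corollary itself.

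The only genuine subtlety — and the step I would treat most carefully — is the invariance claim: that a non-trivial $BF$-invariant really does obstruct positive scalar curvature, and that $Y_p$ is PSC. The first is standard (the Bauer–Furuta refinement retains the curvature estimate that kills the moduli space under PSC), and the second reduces to checking that the relevant surgery on $\{pt\}\times S^1 \subset L(p,1)\times S^1$ has codimension three, so that the Gromov–Lawson–Schoen–Yau surgery theorem applies; both are quotable from the cited literature. Everything else is bookkeeping with connected sums and the already-established Theorems \ref{new-BF-non-vanishing} and \ref{main-B}.
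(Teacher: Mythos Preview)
Your proposal is correct and follows essentially the same approach as the paper: the argument is precisely the one sketched in the paragraph preceding the corollary, combining Theorem~\ref{new-BF-non-vanishing} and Theorem~\ref{main-B} with the Gromov--Lawson/Schoen--Yau PSC stability under connected sums and codimension-$3$ surgery to contrast nonvanishing $BF$ on $\#_{i=1}^{j}X_i$ with vanishing $BF$ on the standard PSC model. Your additional remarks on the codimension-$3$ surgery for $Y_p$ and the PSC obstruction from $BF$ just make explicit what the paper leaves implicit.
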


Moreover, by combining Theorem D in \cite{ishi-1} with Theorems \ref{main-A} and \ref{main-B} of our paper, we also obtain
\begin{cor}
Let $X$ be any closed, simply connected, non-spin, symplectic $4$-manifold with $b^+ \equiv 3 \ (\bmod \ 4)$. For $i=1, 2$, let $X_i$ be any one of the $4$-manifolds given in Theorem \ref{main-A} or Theorem \ref{main-B}. Then any connected sum $X \# \Big(\#_{i=1}^j X_i \Big)$ admits an exotic smooth structure, for $j=1, 2$.  
\end{cor}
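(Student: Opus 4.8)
The plan is to derive both corollaries from a single mechanism: the non-vanishing of the stable cohomotopy Seiberg--Witten invariant for appropriate connected sums, combined with the vanishing of the ordinary Seiberg--Witten invariant — and hence of $BF_*$ — on the standard smooth structures of such connected sums because they carry positive scalar curvature metrics. For the second corollary specifically, I would begin by invoking Theorem~D of \cite{ishi-1}, whose hypothesis matches exactly the class of building blocks under consideration: a closed, simply connected, non-spin, symplectic $4$-manifold $X$ with $b^+(X)\equiv 3\ (\mathrm{bmod}\ 4)$ is BF-admissible. Indeed, for such an $X$ we have $b_1=0$, so conditions (2) and (3) of Definition~\ref{BF-adm-def} reduce to $b^+\equiv 3\ (\mathrm{bmod}\ 4)$ and a vacuous statement respectively, while condition (1) is supplied by the symplectic structure via Taubes' theorem together with the non-spin hypothesis (which guarantees the canonical spin$^c$ structure has $c_1^2 = 2\eu+3\sign$ realized with odd Seiberg--Witten value).

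Next I would assemble the connected sum $X \# \big(\#_{i=1}^j X_i\big)$, where each $X_i$ is one of the $4$-manifolds produced in Theorem~\ref{main-A} or Theorem~\ref{main-B}. By those theorems each $X_i$ is BF-admissible, and by the previous paragraph so is $X$. Thus for $j=1$ we have a connected sum of two BF-admissible manifolds, and for $j=2$ a connected sum of three; in either case Theorem~\ref{new-BF-non-vanishing} applies verbatim and yields that the stable cohomotopy Seiberg--Witten invariant of $X \# \big(\#_{i=1}^j X_i\big)$ is non-trivial. Since a non-trivial $BF_*$ obstructs the existence of a positive scalar curvature metric (this is the Bauer--Furuta refinement of the classical argument), the smooth $4$-manifold $X \# \big(\#_{i=1}^j X_i\big)$ with this smooth structure admits no positive scalar curvature metric.

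On the other hand, I would identify the underlying homeomorphism type. Each $X_i$ from Theorem~\ref{main-B} is homeomorphic to a connected sum of copies of $\CP^2$, $\CPb$, $S^1\times S^3$, and the pieces $Y_p$; each $X_i$ from Theorem~\ref{main-A} is a surgered product manifold, but for the purpose of this corollary one uses the ones homeomorphic to standard connected-sum-type manifolds as recorded there. Since $X$ is simply connected and non-spin, by Freedman's theorem it is homeomorphic to a connected sum of copies of $\CP^2$ and $\CPb$. Hence $X\#\big(\#_{i=1}^j X_i\big)$ is homeomorphic to a standard connected sum built from $\CP^2$, $\CPb$, $S^1\times S^3$, and the $Y_p$ factors. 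Such a standard connected sum admits a positive scalar curvature metric: each summand $\CP^2$, $\CPb$, $S^1\times S^3$ does, $Y_p$ does since it is obtained from $L(p,1)\times S^1$ by a codimension-$3$ surgery and PSC is preserved under such surgeries \cite{G-L, S-Y}, and PSC is preserved under connected sum \cite{G-L, S-Y}. Consequently the standard smooth structure on this homeomorphism type has a PSC metric, so its $BF_*$ vanishes — in particular it is \emph{not} diffeomorphic to $X\#\big(\#_{i=1}^j X_i\big)$. This exhibits the desired exotic smooth structure.

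The main obstacle I anticipate is bookkeeping rather than conceptual: one must verify that BF-admissibility is genuinely preserved under taking connected sums with a single simply connected non-spin symplectic piece, i.e.\ that the hypotheses of Theorem~\ref{new-BF-non-vanishing} are met when one of the (at most three) summands comes from Theorem~D of \cite{ishi-1} rather than from Theorems~\ref{main-A}--\ref{main-B}; this is immediate once one checks that Theorem~D's output manifolds satisfy all three conditions of Definition~\ref{BF-adm-def}. A secondary subtlety is ensuring that the relevant $X_i$ chosen from Theorem~\ref{main-A} or \ref{main-B} have the homeomorphism type needed for the PSC comparison manifold to actually carry a PSC metric — one must pick the $X_i$ whose homeomorphism type is a connected sum of $\CP^2$, $\CPb$, $S^1\times S^3$, and $Y_p$ (as in \eqref{irr-home-1} and \eqref{irr-home-2}), and note that $S^1\times S^3$ and $Y_p$ do not obstruct PSC. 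Everything else is a direct citation of Freedman's classification, the surgery-stability of PSC, and the non-vanishing theorems already established.
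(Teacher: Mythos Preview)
Your argument has a genuine gap when the $X_i$ come from Theorem~\ref{main-A}. You try to exhibit the exotic pair by putting a positive scalar curvature metric on the \emph{entire} comparison manifold $M'$ homeomorphic to $X\#\big(\#_{i=1}^j X_i\big)$. This works when every $X_i$ is taken from Theorem~\ref{main-B}, since those are homeomorphic to connected sums of $\CP^2$, $\CPb$, $S^1\times S^3$, $Y_p$. But the corollary allows $X_i$ to be \emph{any} surgered product manifold from Theorem~\ref{main-A}, for instance $X_1=\Sigma_g\times\Sigma_h$ with $g,h\geq 3$ odd. In that case the comparison manifold has fundamental group containing $\pi_1(\Sigma_g)\times\pi_1(\Sigma_h)$, and no smooth manifold in that homeomorphism class admits a PSC metric (this is enlargeability/Gromov--Lawson). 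Your own ``secondary subtlety'' paragraph effectively concedes this by restricting to $X_i$ with PSC-type homeomorphism models, but that restriction is not permitted by the statement.

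The paper does not reprove the mechanism; it simply cites Theorem~D of \cite{ishi-1}, feeding in the BF-admissible $X_i$ supplied by Theorems~\ref{main-A} and~\ref{main-B}. The point of Theorem~D (and the correct fix for your argument) is to replace \emph{only the simply connected piece} $X$ by its Freedman model $a\CP^2\# b\CPb$, which carries PSC and hence has trivial $BF_*$; Bauer's connected-sum formula then forces $BF_*\big((a\CP^2\# b\CPb)\#(\#_i X_i)\big)=0$ regardless of what the $X_i$ are. Comparing this with the nontrivial $BF_*$ on $X\#(\#_i X_i)$ from Theorem~\ref{new-BF-non-vanishing} gives the exotic pair, with no PSC hypothesis needed on the $X_i$ side.
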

Examples of closed non-spin and simply-connected $4$-manifolds (which necessarily satisfy $b^+ \equiv 3 \ (\bmod \ 4)$) can be pulled out from the large collections of \cite{ABBKP}, or from earlier works of various authors in this direction. (See for instance Gompf's pioneer work \cite{gom}.) \par 

Another main application we will give regards the Ricci flow solutions on smooth $4$-manifolds, and is discussed in Section $3$. This is tightly related to Conjecture 1.8 of Fang, Zhang and Zhang in \cite{fz-1}, as we will explain below. Let $X$ be a closed oriented Riemannian manifold of dimension $n \geq 3$. {The Ricci flow} on $X$ is the following evolution equation:
\begin{eqnarray*}
 \frac{\partial }{\partial t}{g}=-2{Ric}_{g}, 
\end{eqnarray*}
where ${Ric}_{g}$ is the Ricci curvature of the evolving Riemannian metric $g$. The Ricci flow was introduced in the celebrated work \cite{ha-0} of Hamilton in order to produce constant positive sectional curvature metrics on $3$-manifolds. Since the above equation does not preserve the volume in general, one often considers the \textit{normalized Ricci flow} on $X$:
\begin{eqnarray*}\label{Ricci}
 \frac{\partial }{\partial t}{g}=-2{Ric}_{g} + \frac{2}{n}\overline{s}_{g} {g}, 
\end{eqnarray*}
where $\overline{s}_{g}:={{\int}_{X} {s}_{g} d{\mu}_{g}}/{vol_{{g}}}$ and ${s}_{g}$ denotes the scalar curvature of the evolving Riemannian metric $g$, $vol_{g}:={\int}_{X}d{\mu}_{g}$ and $d{\mu}_{g}$ is the volume measure with respect to $g$. A one-parameter family of metric $\{g(t)\}$, where $t \in [0, T)$ for some $0<T\leq \infty$, is called a solution to the normalized Ricci flow if this satisfies the above equation at all $x \in X$ and $t \in [0, T)$. It is known that the normalized flow is equivalent to the unnormalized flow by reparametrizing in time $t$ and scaling the metric in space by a function of $t$. The volume of the solution metric to the normalized Ricci flow is constant in time. \par
Recall that a solution $\{g(t)\}$ to the normalized Ricci flow on a time interval $[0, T)$ is said to be maximal if it cannot be extended past time $T$. Let us also recall the following definition introduced by Hamilton \cite{ha-1, c-c}:
\begin{defn}\label{non-sin}
A maximal solution $\{g(t)\}$, $t \in [0, T)$ of the normalized Ricci flow on $X$ is called non-singular if $T=\infty$ and if the Riemannian curvature tensor $Rm_{g(t)}$ of $g(t)$ satisfies 
$$
\sup_{X \times [0, T)}|Rm_{g(t)}| < \infty. 
$$
\end{defn}
In his pioneer work, Hamilton \cite{ha-0} proved that, in dimension $3$, there exists a unique non-singular solution to the normalized Ricci flow if the initial metric is positive Ricci curvature. Moreover, in \cite{ha-1}, Hamilton classified non-singular solutions to the normalized Ricci flow on $3$-manifolds. This work played an important role in understanding long-time behavior of solutions of the Ricci flow on $3$-manifolds. On the other hand, Hamilton also proved that on any closed oriented Riemannian $4$-manifold with constant positive curvature operator, there is a unique non-singular solution to the normalized flow which converges to a smooth Riemannian metric of positive sectional curvature \cite{ha-2}. In \cite{fz-1}, Fang, Zhang and Zhang also studied the properties of non-singular solutions to the normalized Ricci flow in higher dimensions. Inspired by their work, the second author \cite{ishi} of the current article introduced the following definition:
\begin{defn}[\cite{ishi}]\label{bs}
A maximal solution $\{g(t)\}$, $t \in [0, T)$, to the normalized Ricci flow  on $X$ is called quasi-non-singular if $T=\infty$ and if the scalar curvature $s_{g(t)}$ of $g(t)$ satisfies 
$$
\sup_{X \times [0, T)}|{s}_{g(t)}| < \infty. 
$$
\end{defn}
Any non-singular solution is quasi-non-singular. In dimension $4$, it was observed in \cite{fz-1} that the existence of the non-singular solution of the normalized Ricci flow brings constraints on the topology of the $4$-manifold, and in particular on its Euler characteristic and signature. Based on this fact, the authors proposed a conjecture. To state their conjecture precisely, we need to recall the definition of Perelman's $\bar{\lambda}$ invariant \cite{p-1,p-2}. Let $g$ be any Riemannian metric on a closed oriented smooth manifold $X$ with dimension $n \geq 3$. Consider the least eigenvalue $\lambda_g$ of the elliptic operator $4 \Delta_g+s_g$, where $s_g$ denotes the scalar curvature of $g$, and $\Delta = d^*d= - \nabla\cdot\nabla $ is the positive-spectrum  Laplace-Beltrami operator associated with  $g$. $\lambda_g$ can be expressed in terms of Raleigh quotients as 
\begin{eqnarray*}
\lambda_g = \inf_{u} \frac{\int_X \left[ s_gu^2 + 4 |\nabla u|^2 \right] d\mu}{\int_M u^2d\mu}, 
\end{eqnarray*}
where the infimum is taken over all smooth, real-valued functions $u$ on $X$. Consider the the scale-invariant quantity $\lambda_g(vol_g)^{2/n}$, where $vol_g= \int_Md\mu_g$ denotes the total volume of $(X,g)$. By taking the supremum of this quantity over the space of all Riemannian metrics, we can define Perelman's $\bar{\lambda}$ invariant associated to $X$:
\begin{eqnarray}\label{pre}
\bar{\lambda}(X)= \sup_g \lambda_g(vol_g)^{2/n}.
\end{eqnarray}
The Fang-Zhang-Zhang conjecture can be stated as follows: 
\begin{conj}[Conjecture 1.8 in \cite{fz-1}]\label{conj-1}
Let $X$ be a closed oriented smooth Riemannian $4$-manifold with $||X|| \not=0$ and $\bar{\lambda}(X) < 0$, where $||X||$ denotes Gromov's simplicial volume. Suppose that there is a quasi-non-singular solution to the normalized Ricci flow on $X$. Then the following holds:
\begin{eqnarray}\label{gh-T}
2\eu(X) - 3|\sign(X)| \geq \frac{1}{1295{\pi}^2}||X||. 
\end{eqnarray}
 \end{conj}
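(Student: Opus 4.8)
The plan is to sandwich $2\eu(X) - 3|\sign(X)|$ between an $L^2$ curvature integral of the flow metrics and a universal multiple of $||X||$, using three inputs: the Chern--Gauss--Bonnet and Hirzebruch signature formulas, a long-time analysis of the quasi-non-singular solution $\{g(t)\}$, and a Gromov-type bound for the simplicial volume in terms of curvature. First I would record the identity, valid for any metric $g$ on $X$,
\[
2\eu(X) - 3|\sign(X)| \;=\; \frac{1}{4\pi^{2}}\int_X\Big(2|W^{\mp}_{g}|^{2} + \frac{s_{g}^{2}}{24} - \frac{|\ro_{g}|^{2}}{2}\Big)\,d\mu_{g},
\]
where $\ro_g$ is the trace-free Ricci tensor and $W^{\mp}_g$ is $W^-_g$ or $W^+_g$ according to the sign of $\sign(X)$; discarding the nonnegative Weyl term gives, for every $g$,
\[
2\eu(X) - 3|\sign(X)| \;\ge\; \frac{1}{96\pi^{2}}\int_X s_g^{2}\,d\mu_g \;-\; \frac{1}{8\pi^{2}}\int_X |\ro_g|^{2}\,d\mu_g.
\]

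Next I would feed in good metrics produced by the flow. Since $\{g(t)\}_{t\in[0,\infty)}$ is quasi-non-singular, $\sup_{X\times[0,\infty)}|s_{g(t)}|<\infty$ and $\mv_{g(t)}$ is constant in $t$. Transferring Perelman's monotonicity of the $\lambda$-functional from the unnormalized flow to the reparametrized normalized flow, the scale-invariant quantity $\lambda(g(t))\,\mv_{g(t)}^{1/2}$ is non-decreasing in $t$ and, by the definition of Perelman's invariant, bounded above by $\bar\lambda(X)$; hence it converges, and the standard consequence is a sequence $t_i\to\infty$ along which $\int_X|\ro_{g(t_i)}|^{2}\,d\mu\to 0$. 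Putting $g=g(t_i)$ in the inequality above and letting $i\to\infty$ yields $2\eu(X)-3|\sign(X)|\ge \frac{1}{96\pi^{2}}\,\limsup_{i}\int_X s_{g(t_i)}^{2}\,d\mu$. Finally I would bound this $\limsup$ below by $||X||$: by Perelman's identification of $\bar\lambda(X)$ with the Yamabe invariant in the nonpositive range, together with the standard estimate $\int_X s_g^{2}\,d\mu_g\ge\bar\lambda(X)^{2}$ for every metric, the $\limsup$ is at least $\bar\lambda(X)^{2}$; and by Gromov's inequality relating $||X||$ to the $L^2$-norm of the Ricci curvature---applied to $g(t_i)$, where $\int|\ro_{g(t_i)}|^{2}\to 0$ leaves only the scalar part in the limit---one has $||X||\le c\,\bar\lambda(X)^{2}$ for a universal constant $c$. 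Composing these estimates, and tracking the numerical constants through the Gauss--Bonnet coefficient, Perelman's identification, and the sharp Gromov constant, is designed to produce exactly the factor $1/(1295\pi^{2})$.

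The hard part will be the long-time analysis. Quasi-non-singularity controls only the scalar curvature, not the full Riemann tensor, so one cannot invoke Cheeger--Gromov--Hamilton compactness to extract a smooth limit metric (as one would in the non-singular case, where the limit is a gradient shrinking Ricci soliton, hence Einstein in dimension four); the whole argument must be carried out at the level of integral curvature quantities. Concretely, passing from the vanishing of the time-derivative of Perelman's monotone quantity---an integral of a squared Hessian-type expression weighted by the minimizer $w=e^{-f/2}$ of the $\lambda$-functional---to the genuine $L^2$-smallness of $\ro_{g(t_i)}$ requires enough uniform control on $f$, where $w$ solves $(-4\Delta_{g(t)}+s_{g(t)})w=\lambda(g(t))\,w$; obtaining that control from $\sup|s|<\infty$ alone is the essential obstacle, and is precisely the gap between the cases already known and the conjecture as stated. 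A secondary, bookkeeping difficulty is that recovering the precise constant $1295\pi^{2}$ requires the sharp form of Gromov's simplicial-volume inequality rather than the crude bound obtained from Bishop--Gromov volume comparison after rescaling.
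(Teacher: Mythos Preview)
This statement is a \emph{conjecture}, not a theorem: the paper explicitly says that the FZZ conjecture ``remains open,'' and provides no proof. So there is nothing in the paper to compare your proposal to; the correct answer here is that the paper does not attempt to prove this.

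That said, your strategy is worth commenting on. You identify the ``hard part'' as extracting $L^{2}$-smallness of $\stackrel{\circ}{r}_{g(t_i)}$ from Perelman monotonicity under only a scalar-curvature bound. But that step is \emph{not} the obstacle: it is exactly Lemma~\ref{FZZ-prop} of the paper (due to Fang--Zhang--Zhang), which uses only $|s_{g(t)}|\leq C$ (quasi-non-singularity) together with $\hat s_{g(t)}\leq -c<0$, and the latter bound is an immediate consequence of the hypothesis $\bar\lambda(X)<0$ via Lemma~\ref{mini-scal-lem}. So the first two-thirds of your outline are already in the literature and do go through.

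The genuine gap is your final step. You invoke ``Gromov's inequality relating $||X||$ to the $L^{2}$-norm of the Ricci curvature,'' but no such inequality exists. Gromov's main inequality bounds $||X||$ by a constant times the volume of a metric with a \emph{pointwise} Ricci lower bound; in the Einstein case this translates, after rescaling, into $||X||\leq c\int s^{2}\,d\mu$ because $s$ is constant and ${\rm Ric}=\tfrac{s}{4}g$ pointwise. Along the flow you only have $\int_{X}|\stackrel{\circ}{r}_{g(t_i)}|^{2}\,d\mu\to 0$, an integral condition that gives no pointwise Ricci control, so Bishop--Gromov comparison and hence Gromov's simplicial-volume estimate are unavailable. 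Bridging this---passing from an integral almost-Einstein condition to a simplicial-volume bound---is precisely why Conjecture~\ref{conj-1} is open, and your proposal does not supply a mechanism for it.
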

In this article, we refer to this conjecture as the \textit{FZZ conjecture} in short. To the best of our knowledge, the FZZ conjecture remains open. In connection with this conjecture, the following problem arises naturally:  
\begin{pro}\label{prob-1}
Let $X$ be a closed oriented smooth $4$-manifold with $||X|| \not=0$, $\bar{\lambda}(X) < 0$ and satisfying the inequality (\ref{gh-T}). Then, is there always a quasi-non-singular solution to the normalized Ricci flow on $X$?
\end{pro}
This is nothing but the converse of Conjecture \ref{conj-1}. \par
We would like to introduce: 
\begin{defn}\label{def-property}
Let $X$ be a closed oriented topological $4$-manifold. We say $X$ has property $\mathcal R$ if $X$ satisfies the following properties. 
\begin{enumerate}
\item $X$ has  $||X|| \not=0$ and satisfies the strict case of the inequality (\ref{gh-T}):
\begin{eqnarray*}
2\eu(X) - 3|\sign(X)| > \frac{1}{1295{\pi}^2}||X||. 
\end{eqnarray*}
\item $X$ admits at least one smooth structure for which Perelman's $\bar{\lambda}$ invariant is negative and there is no quasi-non-singular solution to the normalized Ricci flow for any initial metric. 
\end{enumerate}
Similarly, we say $X$ has \textit{$\mathcal{R}-\infty$-property} if $X$ satisfies the above condition 1 and moreover admits infinitely many smooth structures for which Perelman's $\bar{\lambda}$ invariants are negative and there is no quasi-non-singular solution to the normalized Ricci flow for any initial metric. 
\end{defn}

We shall prove the following existence theorem of $4$-manifolds with property $\mathcal R$ in the sense of Definition \ref{def-property}. 
\begin{main}\label{main-AcC}
Let $X_{m}$ be a BF-admissible closed oriented smooth $4$-manifold and consider the following connected sum:
\begin{eqnarray*}
M^{{\ell}_{1}, {\ell}_{2}}_{g, h, j}:=(\#^{j}_{m=1}X_{m}) \# (\Sigma_{h} \times \Sigma_{g}) \# \ell_{1}({S}^{1} \times {S}^{3}) \# \ell_{2} \overline{{\mathbb C}{P}^{2}}, 
\end{eqnarray*}
where $j = 1, 2$, ${\ell}_{1}, {\ell}_{2} \geq 1$ and $g, h \geq 3$ are odd integers. Then, there are infinitely many sufficiently large integers $g, h, {\ell}_{1}, \ell_{2}$ for which $M^{{\ell}_{1}, {\ell}_{2}}_{g, h, j}$ has property $\mathcal R$. 
\end{main}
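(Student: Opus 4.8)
The plan is to verify the two defining conditions of property $\mathcal{R}$ for $M := M^{\ell_1,\ell_2}_{g,h,j}$ separately, using the non-vanishing of the stable cohomotopy Seiberg--Witten invariant as the bridge between them. First I would establish the non-vanishing of $BF_{*}$ for $M$. By Theorem \ref{main-A}, the product $\Sigma_h \times \Sigma_g$ with odd genera is BF-admissible, and by hypothesis each $X_m$ is BF-admissible; hence $(\#_{m=1}^j X_m)\#(\Sigma_h\times\Sigma_g)$ is a connected sum of $j+1 \le 3$ BF-admissible pieces, so Theorem \ref{new-BF-non-vanishing} gives that this connected sum has nontrivial $BF_*$. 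The extra summands $\ell_1(S^1\times S^3)$ and $\ell_2\CPb$ are homeomorphic (in fact diffeomorphic) to connected sums that do not kill $BF_*$ — this is exactly the stability of the Bauer--Furuta invariant under connected summing with $S^1\times S^3$'s (which only shift $b_1$) and with $\CPb$'s, which is built into the structure of Bauer's non-vanishing machinery and was used in \cite{ishi-1}; so $BF_M \neq 0$. Since a nontrivial $BF$-invariant obstructs positive scalar curvature, $M$ carries no PSC metric, and moreover by the Bauer--Furuta / LeBrun-type curvature estimates one gets a genuine negative upper bound on $\bar\lambda(M)$: concretely $\bar\lambda(M)^2 \le -32\pi^2\, c_1^2$ for the relevant (sum of) monopole class(es), which is strictly negative.

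Next I would handle condition 2 of Definition \ref{def-property}, the obstruction to a quasi-non-singular solution of the normalized Ricci flow. This is where the bulk of the argument lies and is the main obstacle. The strategy, following the Ishida/LeBrun circle of ideas and the obstruction results quoted from \cite{ishi}, is: if a quasi-non-singular solution $\{g(t)\}$ existed, then along the flow one extracts limiting curvature integral bounds that force
\begin{eqnarray*}
2\eu(M) - 3|\sign(M)| \;\ge\; \frac{1}{3}\!\left(\frac{1}{2\pi}\right)^{\!2}\!\big(\text{sum of squares of monopole classes}\big) \;>\;0,
\end{eqnarray*}
together with a strengthened version that, for connected sums carrying several independent Bauer--Furuta classes coming from the $X_m$'s and from $\Sigma_h\times\Sigma_g$, actually beats the naive Gromov--Hitchin--Thorpe bound. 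I would compute $\eu$ and $\sign$ of $M$ additively: $\eu(M) = \sum \eu(X_m) + \eu(\Sigma_h\times\Sigma_g) + \ell_2 - (j + 1 + \ell_1)\cdot 2 + \dots$ (the precise connected-sum corrections), $\sign(M) = \sum\sign(X_m) + 0 - \ell_2$, and similarly tabulate $\|M\|$ using Gromov's estimate $\|\Sigma_h\times\Sigma_g\| \ge c\,(g-1)(h-1)$ and additivity of simplicial volume under connected sum. Then I would show the curvature obstruction inequality above is violated once $g,h$ are large (so the $\Sigma_h\times\Sigma_g$ contributes a large monopole-class-square while contributing a comparatively controlled amount to $2\eu - 3|\sign|$), which rules out any quasi-non-singular solution — this uses the fact that a nontrivial $BF_*$ supplies monopole classes whose squares are, on a connected sum, at least the sum of the $c_1^2$'s of the pieces, and in particular $\asymp (g-1)(h-1)$.

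Finally, condition 1 of Definition \ref{def-property} — the \emph{strict} Gromov--Hitchin--Thorpe inequality $2\eu(M) - 3|\sign(M)| > \frac{1}{1295\pi^2}\|M\|$ — I would arrange by a counting/asymptotics argument: all three quantities $\eu(M)$, $|\sign(M)|$, $\|M\|$ grow linearly in the parameter $(g-1)(h-1)$ coming from $\Sigma_h\times\Sigma_g$ (with $\|M\|$ governed by Gromov's upper bound $\|\Sigma_h\times\Sigma_g\| \le C(g-1)(h-1)$ for an explicit $C$), while the extra $\ell_1(S^1\times S^3)$ factors contribute $0$ to all three and the $\ell_2\CPb$ factors contribute only to $\eu$ and $\sign$ (positively to $2\eu - 3|\sign|$, since each $\CPb$ adds $+1$ to $\eu$ and $-1$ to $\sign$, net $2\cdot 1 - 3\cdot 1 = -1$; so in fact I must be careful and let $\ell_2$ be controlled relative to $\ell_1$, or absorb it — I would instead exploit that the $S^1\times S^3$ summands freely boost $\eu$... they do not, $\eu(S^1\times S^3)=0$, so the genuine slack must come from choosing $g,h$ so that $2\eu(\Sigma_h\times\Sigma_g) - 3|\sign| = 4(g-1)(h-1) > \frac{1}{1295\pi^2}\|\Sigma_h\times\Sigma_g\|$, which holds because the constant $1/(1295\pi^2)$ is tiny and Gromov's bound on $\|\Sigma_h\times\Sigma_g\|$ is of the form $C(g-1)(h-1)$ with $C$ vastly smaller than $4\cdot 1295\pi^2$). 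Since both conditions 1 and 2 hold for infinitely many $(g,h,\ell_1,\ell_2)$ with $g,h$ odd and large, $M^{\ell_1,\ell_2}_{g,h,j}$ has property $\mathcal{R}$, completing the proof; the $\mathcal{R}$-$\infty$ strengthening would then follow by the same argument combined with the infinitely many distinct smooth structures on $M$ produced by the constructions of Theorems \ref{main-A} and \ref{main-B}.
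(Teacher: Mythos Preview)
Your proposal has the overall architecture right (verify $\bar\lambda<0$ via $BF_*\neq 0$, then check condition 1 and condition 2 of property $\mathcal R$), but the central mechanism is misidentified, and this is a genuine gap.

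You write that the Ricci-flow obstruction is ``violated once $g,h$ are large (so the $\Sigma_h\times\Sigma_g$ contributes a large monopole-class-square while contributing a comparatively controlled amount to $2\eu - 3|\sign|$)''. This is not how the obstruction works. For $\Sigma_h\times\Sigma_g$ one has $c_1^2=4(g-1)(h-1)$ \emph{and} $2\eu - 3|\sign|=4(g-1)(h-1)$, so these grow at exactly the same rate; enlarging $g,h$ alone cannot produce the obstruction. The actual obstruction (Theorem~\ref{Ricci-non-sin} / Corollary~\ref{speical-ein}) says that a quasi-non-singular solution forces $2\eu(M)+3\sign(M)\ge \tfrac{2}{3}\sum c_1^2(X_m)$, and it is the summands $\ell_1(S^1\times S^3)\#\ell_2\CPb$ that \emph{decrease} $2\eu(M)+3\sign(M)$ (by $4\ell_1+\ell_2$, since each $S^1\times S^3$ costs $4$ and each $\CPb$ costs $1$ in $2\eu+3\sign$) without touching $\sum c_1^2$. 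So it is large $\ell_1,\ell_2$, not large $g,h$, that kills the flow. Your side computations reflect this confusion: $S^1\times S^3$ does \emph{not} contribute $0$ to $\eu$ under connected sum (it subtracts $2$), and a $\CPb$ summand changes $2\eu-3\sign$ by $+5$, not $-1$.

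What is missing from your argument, and is the real content of the paper's proof, is the simultaneous balancing: one needs $4(j+\ell_1)+\ell_2$ to lie in the window
\[
\tfrac{1}{3}\Bigl(\textstyle\sum c_1^2(X_m)+4(g-1)(h-1)\Bigr)\;<\;4(j+\ell_1)+\ell_2\;<\;\textstyle\sum c_1^2(X_m)+\kappa(g,h)-\tfrac{\|X\|}{1295\pi^2},
\]
where the left inequality is the Ricci-flow obstruction and the right inequality is exactly $2\eu(M)+3\sign(M)>\tfrac{1}{1295\pi^2}\|M\|$ (using the exact value $\|\Sigma_h\times\Sigma_g\|=24(g-1)(h-1)$, not just Gromov bounds). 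The point of taking $g,h$ large is that the \emph{width} of this window is roughly $\tfrac{8}{3}(g-1)(h-1)$, so for large $g,h$ there are infinitely many admissible $(\ell_1,\ell_2)$. This is Lemma~\ref{cosimplicial-lem-2} and Proposition~\ref{prop-FZZ-HTin}; without it your two conditions pull in opposite directions and you have not shown they can be met at once. (Also, $\bar\lambda(M)^2\le -32\pi^2 c_1^2$ cannot be what you mean; and the final remark about $\mathcal R$-$\infty$ belongs to Theorem~\ref{main-AcCC}, not here.)
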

These integers $g, h, {\ell}_{1}, \ell_{2}$ depend on the topology of the connected sum $\#^{j}_{m=1}X_{m}$ in general. Since there are infinitely many BF-admissible closed $4$-manifolds by Theorems \ref{main-A} and \ref{main-B}, as a corollary to Theorem \ref{main-AcC}, we see that:
\begin{cor}
The converse of the FZZ conjecture fails to hold for vast families of $4$-manifolds. 
\end{cor}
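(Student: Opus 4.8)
The plan is to verify the two clauses of Definition \ref{def-property} for $M^{\ell_1,\ell_2}_{g,h,j}$ once the parameters are taken large. First I would compute the simplicial volume: since $\|X\|$ is additive under connected sum in dimension four and vanishes on summands like $S^1\times S^3$, $\overline{\mathbb{CP}}{}^2$, and on the BF-admissible summands $X_m$ we will use (they are symplectic with vanishing simplicial volume in the relevant examples, or one simply needs $\|\Sigma_h\times\Sigma_g\|\neq 0$ to dominate), one gets $\|M^{\ell_1,\ell_2}_{g,h,j}\| = \|\Sigma_h\times\Sigma_g\| = C\,(g-1)(h-1)$ for the known constant $C$ (Gromov's proportionality, using that $\Sigma_h\times\Sigma_g$ is a product of hyperbolic surfaces). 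In particular $\|M\|\neq 0$. Next I would compute $\eu$ and $\sign$ additively: $\eu(M) = j\cdot\eu(X_m)\text{-type terms} + 4(g-1)(h-1) + 0 + \ell_2 - 2(j+\ell_1) $ and $\sign(M) = \sum\sign(X_m) + 0 + 0 - \ell_2$, so that $2\eu(M)-3|\sign(M)|$ grows \emph{linearly} in $\ell_1,\ell_2$ (with the right signs) once $\ell_2$ dominates $\sum\sign(X_m)$ so the absolute value opens up favorably, while $\tfrac{1}{1295\pi^2}\|M\|$ is fixed once $g,h$ are fixed. Hence for each fixed large $(g,h)$ the strict Gromov--Hitchin--Thorpe inequality in clause 1 holds for all large $\ell_1,\ell_2$. (One has to be slightly careful about the order of quantifiers: I would first fix $g,h$ large enough that $\Sigma_g\times\Sigma_h$ is itself BF-admissible by Theorem \ref{main-A}, then choose $\ell_1,\ell_2$ large.)

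For clause 2, the mechanism is the standard one from \cite{ishi,fz-1}: a BF-admissible manifold has non-trivial stable cohomotopy Seiberg--Witten invariant, and by Theorem \ref{new-BF-non-vanishing} so does the connected sum of up to three BF-admissible pieces. Here $\Sigma_g\times\Sigma_h$ (large odd genera) is BF-admissible by Theorem \ref{main-A}, and $\#^j_{m=1}X_m$ contributes $j\le 2$ more BF-admissible summands, so $(\#^j_{m=1}X_m)\#(\Sigma_h\times\Sigma_g)$ is, after regrouping, a connected sum of at most three BF-admissible manifolds and thus has $BF_*\neq 0$. The extra summands $\ell_1(S^1\times S^3)$ and $\ell_2\overline{\mathbb{CP}}{}^2$ do not kill $BF_*$: connect-summing with $S^1\times S^3$ and with $\overline{\mathbb{CP}}{}^2$ preserves non-vanishing of the stable cohomotopy invariant (this is exactly the stability property that makes $BF_*$ useful, cf. \cite{b-1,ishi-1}). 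Non-vanishing of $BF_*$ then forces, via the Bauer--Furuta/LeBrun curvature estimates adapted to this setting in \cite{ishi}, a strictly negative upper bound for a scale-invariant scalar-curvature functional, which gives both $\bar\lambda(M)<0$ and — since a quasi-non-singular solution to the normalized Ricci flow would by the Fang--Zhang--Zhang monotonicity arguments of \cite{fz-1} produce an Einstein-type Gromov--Hitchin--Thorpe inequality with the \emph{correct} constant, \emph{contradicting} the strict reverse inequality of clause 1 — the non-existence of any quasi-non-singular solution. I should emphasize that it is precisely clause 1 (strict reverse Gromov--Hitchin--Thorpe) together with clause 2's obstruction from $BF_*\neq 0$ that are simultaneously arranged: the same large $\ell_1,\ell_2$ that open up the strict inequality also leave $BF_*$ non-trivial, and these two facts together are logically inconsistent with the existence of a quasi-non-singular Ricci flow solution.

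The corollary is then immediate: by Theorems \ref{main-A} and \ref{main-B} there are infinitely many distinct BF-admissible closed $4$-manifolds $X_m$ (indeed with varying fundamental group $\mathbb Z$ or $\mathbb Z_p$, hence pairwise non-homeomorphic), so the family $\{M^{\ell_1,\ell_2}_{g,h,j}\}$ contains infinitely many pairwise non-homeomorphic members each having property $\mathcal R$; each is a manifold on which $\|M\|\neq 0$, $\bar\lambda(M)<0$, the Gromov--Hitchin--Thorpe inequality \eqref{gh-T} holds (in fact strictly), yet no quasi-non-singular solution to the normalized Ricci flow exists for any initial metric. This is exactly the assertion that the converse of the FZZ conjecture fails, and fails for vast families.

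\textbf{Main obstacle.} The genuinely delicate point is not the simplicial-volume or Euler/signature bookkeeping — those are routine additivity computations — but rather controlling the \emph{order of quantifiers} and the curvature estimate. One must show that there is a \emph{single} regime of parameters $(g,h,\ell_1,\ell_2)$, infinitely often, in which (i) $\Sigma_g\times\Sigma_h$ is BF-admissible and the regrouped connected sum has $BF_*\neq 0$, (ii) the strict reverse inequality $2\eu-3|\sign|>\tfrac{1}{1295\pi^2}\|M\|$ holds, and (iii) the $BF_*\neq 0$ curvature obstruction from \cite{ishi} yields $\bar\lambda<0$ and rules out quasi-non-singular flow. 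The tension is that enlarging $g,h$ increases $\|M\|$ (hurting (ii)) while the $\overline{\mathbb{CP}}{}^2$ and $S^1\times S^3$ summands help (ii) and are harmless for (i) and (iii); making this balance explicit — i.e. exhibiting the asymptotic inequalities between the linear-in-$\ell_2$ growth of $2\eu-3|\sign|$ and the fixed-once-$g,h$-fixed quantity $\tfrac{1}{1295\pi^2}\|M\|$ — is the crux, and is where I would spend the most care.
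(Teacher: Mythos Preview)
There is a genuine gap in your mechanism for ruling out quasi-non-singular solutions. You argue that a quasi-non-singular solution would, via Fang--Zhang--Zhang monotonicity, force a Gromov--Hitchin--Thorpe inequality, \emph{contradicting} clause~1 of Definition~\ref{def-property}. But clause~1 is not a reverse inequality: it asserts that $2\eu(M)-3|\sign(M)|>\tfrac{1}{1295\pi^2}\|M\|$ \emph{holds}. A quasi-non-singular solution producing the same inequality (non-strictly) would be entirely consistent with clause~1, not contradictory. The whole point of property~$\mathcal R$ is that $M$ \emph{does} satisfy Gromov--Hitchin--Thorpe and \emph{still} admits no quasi-non-singular flow; the converse of FZZ is the implication ``GHT $\Rightarrow$ quasi-non-singular exists'', so you must exhibit an obstruction to quasi-non-singular solutions that is logically independent of the GHT inequality.

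The paper's obstruction (Theorem~\ref{Ricci-non-sin}) is of a different nature. From the Seiberg--Witten curvature bound~(\ref{weyl-leb-sca-2}) and the asymptotic vanishing~(\ref{fzz-ricci-011}) of the trace-free Ricci under quasi-non-singular flow, one deduces that existence of such a flow on $M=(\#_{m=1}^n X_m)\# N$ forces
\[
2\eu(M)+3\sign(M)\ \ge\ \tfrac{2}{3}\sum_m c_1^2(X_m),
\]
equivalently $4n-(2\eu(N)+3\sign(N))\le\tfrac{1}{3}\sum_m c_1^2(X_m)$. The contrapositive gives the obstruction inequality~(\ref{Hit-Tho-2}), which is \emph{separate} from the GHT inequality~(\ref{Hit-Tho-1}). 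The delicate step --- and the actual content of Lemma~\ref{cosimplicial-lem-2} and Proposition~\ref{prop-FZZ-HTin} --- is to arrange~(\ref{Hit-Tho-1}) and~(\ref{Hit-Tho-2}) \emph{simultaneously}: one needs $4(j+\ell_1)+\ell_2$ large enough to violate the SW-flow inequality but small enough that GHT still holds. Your quantifier analysis is on the right track in spirit, but you are juggling the wrong pair of inequalities, and the claim that ``large $\ell_1,\ell_2$'' suffices is false without this two-sided control (indeed, taking $\ell_1,\ell_2$ too large destroys~(\ref{Hit-Tho-1}) since $2\eu(M)+3\sign(M)$ decreases). A minor side remark: the simplicial volumes $\|X_m\|$ need not vanish in general, and the paper keeps them as $\sum_m\|X_m\|$ throughout.
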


We are also able to prove a similar existence theorem of $4$-manifolds with $\mathcal{R}-\infty$-property in the sense of Definition \ref{def-property} as follows:
\begin{main}\label{main-AcCC}
Let $X$ be a BF-admissible closed oriented smooth $4$-manifold and consider the following connected sum:
\begin{eqnarray*}
M^{{\ell}_{1}, {\ell}_{2}}_{g, h}:=X \# K3 \# (\Sigma_{h} \times \Sigma_{g}) \# \ell_{1}({S}^{1} \times {S}^{3}) \# \ell_{2} \overline{{\mathbb C}{P}^{2}}, 
\end{eqnarray*}
where $j = 1, 2$, ${\ell}_{1}, {\ell}_{2} \geq 1$ and $g, h \geq 3$ are odd integers. Then, there are infinitely many sufficiently large integers $g, h, {\ell}_{1}, \ell_{2}$ for which $M^{{\ell}_{1}, {\ell}_{2}}_{g, h}$ has $\mathcal{ R}-\infty$-property. 
\end{main}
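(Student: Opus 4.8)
The plan is to assemble the conclusion from three ingredients: (i) a computation of the three relevant topological quantities — Gromov's simplicial volume $\|M\|$, Perelman's $\bar\lambda$, and the Gromov--Hitchin--Thorpe combination $2\eu - 3|\sign|$ — for the connected sum $M=M^{\ell_1,\ell_2}_{g,h}$; (ii) the non-vanishing of the stable cohomotopy Seiberg--Witten invariant, which rules out non-singular (hence quasi-non-singular) solutions to the normalized Ricci flow; and (iii) a counting argument producing \emph{infinitely many} mutually non-diffeomorphic smooth structures, which is what upgrades property $\mathcal R$ to $\mathcal R$-$\infty$. First I would recall that $\|\Sigma_h\times\Sigma_g\|$ is a positive multiple of $(g-1)(h-1)$ by Gromov, that $\|\cdot\|$ is additive under connected sums in dimension $\geq 3$, and that $S^1\times S^3$, $\overline{\mathbb{CP}}{}^2$, $K3$, and any BF-admissible $X$ contribute zero (they admit metrics, or are built from pieces that do, with vanishing simplicial volume — indeed $K3$ and the relevant $X_i$ are, up to homeomorphism, irrelevant here since only the surface factor carries nonzero $\|\cdot\|$). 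So $\|M\| = c\,(g-1)(h-1)$ for a universal constant $c>0$. Meanwhile $2\eu(M)-3|\sign(M)|$ grows \emph{quadratically} in $g,h$ as well — from the $\eu(\Sigma_h\times\Sigma_g)=4(g-1)(h-1)$ term — and the linear-in-$\ell_1,\ell_2$ and constant-in-$X$ corrections are lower order; one then checks that for $g,h$ large (and $\ell_1,\ell_2$ controlled appropriately) the strict inequality $2\eu(M)-3|\sign(M)| > \frac{1}{1295\pi^2}\|M\|$ of condition (1) in Definition~\ref{def-property} holds, since $4 > c/(1295\pi^2)$. This is the first thing to pin down, and it is essentially the same estimate used in the proof of Theorem~E (Theorem~\ref{main-AcC}).

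Next I would address condition (2): negativity of $\bar\lambda$ and the absence of a quasi-non-singular Ricci flow solution, \emph{for infinitely many smooth structures}. The mechanism is standard in this circle of ideas (LeBrun, Ishida--Sasahira): if $BF_M \neq 0$ then $M$ has a non-trivial stable cohomotopy Seiberg--Witten basic class, which forces a Seiberg--Witten-type curvature estimate $\bar\lambda(M) \leq -4\pi\sqrt{2}\sqrt{c_1^2}< 0$ and, crucially, obstructs the existence of any non-singular — and in fact quasi-non-singular — solution to the normalized Ricci flow, via the Gromov--Hitchin--Thorpe-type inequality that such a solution would force together with the monotonicity of $\bar\lambda$ along the flow (this is exactly Theorem~D of \cite{ishi-1}, which I am allowed to cite). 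To invoke $BF_M\neq 0$ I need $M$ (with the relevant smooth structure) to be a connected sum of BF-admissible pieces as in Theorem~\ref{new-BF-non-vanishing}: here $X$ is BF-admissible by hypothesis, $K3$ is BF-admissible (it is a simply-connected spin symplectic manifold with $b^+=3\equiv 3\pmod 4$, $SW\equiv 1$, and $c_1^2=0=2\eu+3\sign$), and $\Sigma_h\times\Sigma_g$ with odd genera is BF-admissible by \cite{ishi-1}; that is exactly two or three BF-admissible summands (with $j=2$ or $3$), so Theorem~\ref{new-BF-non-vanishing} applies to $X\#K3\#(\Sigma_h\times\Sigma_g)$, and then the extra $\ell_1(S^1\times S^3)\#\ell_2\overline{\mathbb{CP}}{}^2$ summands are handled by the blow-up/connect-sum-with-$S^1\times S^3$ stability of $BF\neq 0$ (again from \cite{b-1,ishi-1}).

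To get \emph{infinitely many} smooth structures — the point that distinguishes Theorem~F from Theorem~E — I would exploit the $K3$ factor: performing logarithmic transforms (or knot surgery) on $K3$ produces an infinite family $K3 = E(2)_1, E(2)_2, E(2)_3,\dots$ of homeomorphic but pairwise non-diffeomorphic simply-connected symplectic (hence, in particular, BF-admissible after checking the mod-$4$ and $c_1^2$ conditions, which are homeomorphism invariants here) $4$-manifolds, distinguished by their Seiberg--Witten invariants; substituting each into $M$ yields homeomorphic manifolds $M_n$ that are pairwise non-diffeomorphic, each still satisfying $BF\neq 0$ and hence conditions (1)–(2). Distinguishing them as smooth manifolds is cleanest via the stable cohomotopy SW invariant itself (or, where SW suffices, via the ordinary SW basic classes), using a connected-sum/gluing formula to read off the $K3$-summand's invariant; alternatively one cites that these substitutions are known to produce infinitely many smooth structures on connected sums of this type. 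The main obstacle I anticipate is precisely this last step: verifying rigorously that the infinitely many candidates remain \emph{pairwise} non-diffeomorphic after taking connected sums with $S^1\times S^3$ and $\overline{\mathbb{CP}}{}^2$ factors — connected sums tend to destroy ordinary Seiberg--Witten invariants, so one must argue at the level of the stable cohomotopy refinement (or find the right relative invariant), and make sure the homeomorphism type is genuinely fixed across the family (Freedman's theorem plus matching $\eu$, $\sign$, type, and $\pi_1$). Everything else — the numerical inequality, the $\bar\lambda<0$ estimate, the Ricci-flow obstruction — is a direct application of Theorems~\ref{new-BF-non-vanishing} and Theorem~D of \cite{ishi-1} together with the additivity of $\|\cdot\|$.
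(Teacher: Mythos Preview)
Your overall architecture matches the paper's: reduce property~$\mathcal R$ to the proof of Theorem~\ref{main-AcC} (applied with $X_1=X$, $X_2=Y_m$ a homotopy~$K3$), then upgrade to $\mathcal R$-$\infty$ by varying the $K3$ summand through an infinite family of homeomorphic, BF-admissible homotopy~$K3$'s. The place where your plan and the paper diverge is exactly the step you flag as the main obstacle --- distinguishing the smooth structures \emph{after} taking the connected sum with $X$, $\Sigma_h\times\Sigma_g$, $S^1\times S^3$'s, and $\overline{\mathbb{CP}}{}^2$'s.

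You propose to do this by a gluing formula for the stable cohomotopy invariant, reading off the $K3$-summand's invariant. The paper avoids this entirely with a much softer argument: it uses the \emph{finiteness of the set of monopole classes} on any fixed smooth $4$-manifold with $b^+\geq 2$ (Proposition~\ref{mono-bounded}). Concretely, take $Y_m$ to be the log transform of order $2m+1$ on a Kummer~$K3$; these are K\"ahler, so $\pm c_1(Y_m)$ are monopole classes, and $c_1(Y_m)$ grows unboundedly in $m$. By Theorem~\ref{new-BF-non-vanishing}, each $Z_m:=X\#Y_m\#(\Sigma_h\times\Sigma_g)\#N$ carries monopole classes of the form $\pm c_1(X)\pm c_1(Y_m)+\sum\pm E_i$. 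If the family $\{Z_m\}$ contained only finitely many diffeomorphism types, some fixed $Z_{m_0}$ would be diffeomorphic to infinitely many $Z_m$'s and would therefore have an unbounded set of monopole classes --- contradicting finiteness. No gluing formula is needed; only the existence of these monopole classes and Proposition~\ref{mono-bounded}.

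Two minor corrections. First, a BF-admissible $X$ need not have $\|X\|=0$ --- $\Sigma_g\times\Sigma_h$ itself is BF-admissible with positive simplicial volume. The paper carries $\|X\|$ as a fixed nonnegative constant in Lemma~\ref{simplicial-lem} and Lemma~\ref{cosimplicial-lem-2}; since the $(g-1)(h-1)$ terms dominate, this does not affect the asymptotics. Second, $BF\neq 0$ by itself does \emph{not} obstruct quasi-non-singular solutions; the obstruction (Theorem~\ref{Ricci-non-sin}) requires the numerical inequality~(\ref{asm}), and the work in Lemma~\ref{cosimplicial-lem-2} is precisely to show that this inequality and the strict Gromov--Hitchin--Thorpe inequality can be satisfied \emph{simultaneously} for infinitely many $(g,h,\ell_1,\ell_2)$. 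Your sketch conflates these two conditions.
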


In Subsection \ref{sub-54}, we will propose a stronger version of the the Conjecture \ref{conj-1}; see Conjecture \ref{conj-2} stated there. We shall moreover derive results analagous to Theorem \ref{main-AcC} and Theorem \ref{main-AcCC}; see Theorems \ref{main-EE-mu} and \ref{main-EEE-mu}.

\vspace{0.4in}
\noindent \textit{Acknowledgments.} The first author was partially supported by the NSF grant DMS-0906912. The second author is partially supported by the Grant-in-Aid for Scientific Research (C), Japan Society for the Promotion of Science, No. 20540090.

\newpage
\section{Families of $4$-manifolds satisfying BF-axioms}

In this section, we will be proving Theorems \ref{main-A} and \ref{main-B}, which were stated in the Introduction. 

\subsection{Logarithmic transforms and Luttinger surgeries} \label{luttinger}

Let $L$ be an embedded self-intersection zero $2$-torus in a $4$-manifold $X$ with oriented tubular neighborhood $N(L)$. A \emph{framing} of $N(L)$ is a choice of an orientation-preserving diffeomorphism $\xi :N(L) \to D^2 \x T^2$, giving an identification 
\begin{equation} \label{bettilog}
H_1(\partial (X \setminus N(L))) \cong H_1(L)\oplus \Z,
\end{equation}
where the last summand is generated by a positively oriented meridian $\mu_L$ of $L$. We can construct a new $4$-manifold $X' = X \setminus N(T) \cup_{\phi} D^2 \x T^2$ using a diffeomorphism $\phi: \partial (T^2 \x D^2) \to \partial N(L)$. This diffeomorphism is uniquely determined up to isotopy by the homology class 
\begin{equation*} 
\phi_*[\partial D^2] = p [\mu_L] + q [S^1_{\lambda}] \, ,
\end{equation*} 
where $S^1_{\lambda}$ is a \emph{push-off} of a primitive curve $\lambda$ in $L$ by the chosen framing $\xi$. To sum up, the result of the surgery is determined by the torus $L$, the framing $\xi$, the \emph{surgery curve} $\lambda$ and \emph{the surgery coefficient} $p/q \in \Q \cup \{\infty \}$. This data is encoded in the notation $X(L, \lambda, p/q)$ whenever the framing is clear from the context. The operation producing $X'= X(L, \lambda, p/q)$ is called the \emph{(generalized) logarithmic $p/q$ transform of $X$ along $L$} ---with surgery curve $\lambda$ and framing $\xi$, which we will denote by $(L, \lambda, p/q)$.

If $(X, \omega_X)$ is a symplectic manifold and $L$ is a Lagrangian torus in $X$, then $L$ admits a \emph{Weinstein neighborhood} $N(L)$, which is a tubular neighborhood of $L$ equipped with a canonical framing. This framing, called the \emph{Lagrangian framing} here, is characterized by the unique property that $x \x T^2 $, for any $x \in D^2$, corresponds to a Lagrangian submanifold of $X$ under it. Let $\xi$ be the Lagrangian framing and $S^1_{\lambda}$ be the \emph{Lagrangian push-off} of $\lambda$, i.e the push-off of $\lambda$ in this framing. The $(L, \lambda, 1/q)$ surgery with these choices can be performed \emph{symplectically}, providing us with ---a deformation class of--- a symplectic form $\omega_{X'}$ on $X'= X(L, \lambda, p/q)$ that agrees with $\omega_X$ on the complement of $N(L)$ \cite{ADK}. This special logarithmic transform is referred as \emph{Luttinger surgery}.

The classical topological invariants of $4$-manifolds we are interested in this article change under logarithmic transforms (and in particular under Luttinger surgeries) as follows: Euler characteristic and signature of $X'$ and $X$ are the same, yet their spin types may differ depending on the choice of $L$ and the surgery. It follows that when $\mu_L$ is nullhomologous and $S^1_{\lambda}$ is homologically essential in $X \setminus N(L)$, we have $b_1(X') = b_1(X) -1$ and $b_2(X')=b_2(X)-2$. On the other hand, when both $S^{1}_{\lambda}$ and $L$ are nullhomologous in $X \setminus N(L)$, 
\begin{equation*}
H_1(X(L,\lambda,p/q );\Z )= H_1(X;\Z ) \oplus \Z / p\Z \, . 
\end{equation*}
Lastly, applying the Seifert-Van Kampen theorem, we get: 
\begin{equation} \label{LogSVK}
\pi_1(X(L,\lambda,p/q)) = \pi_1(X \setminus N(T))/
\langle [\mu_{L}]^p [S^1_{\lambda}]^q =1 \rangle .
\end{equation}

It follows from the very definition that a logarithmic transform operation can be reversed, by performing a logarithmic transform along the \textit{core torus} of the surgery that now lies in $X'= X(L, \lambda, p/q)$ by an appropriate choice of the surgery curve and the surgery coefficient. It is an easy exercise to see that, the same holds true in the symplectic setting; i.e. a Luttinger surgery can be reversed to obtain back the original symplectic $4$-manifold. In this case, we will call it \textit{undoing} the corresponding logarithmic transform or Luttinger surgery.

In what follows, we will mainly be interested in Luttinger surgeries so as to conclude that the resulting $4$-manifolds we obtain satisfy the first assumption of Definition \ref{BF-adm-def}. Namely, we will be using the canonical class $\Gamma_{X}$ associated to the resulting symplectic form, so that 
\[SW_{X}(\Gamma_{X}) \equiv 1 \ (\bmod \ 2) \, , \text{and} \ c^{2}_{1}({\cal L}_{\Gamma_{X}}) = 2 \eu(X) + 3 \sign(X) .\]
The rest of the assumptions will be seen to be satisfied merely by looking at the topological effect of the underlying logarithmic transforms.

\subsection{Surgered product manifolds}\label{sec-2}

Let $X_0$ be the product of two Riemann surfaces $\Sigma_g$ and $\Sigma_h$, equipped with the product symplectic form. The second homology group $H_2(X_0)$ is generated by the homology classes of $\Sigma_g$, $\Sigma_h$ and the Lagrangian tori $a_i \x c_j$, $a_i \x d_j$, $b_i \x c_j$, $b_i \x d_j$, $i=1, \ldots, g$ and $j=1, \ldots, h$, where $a_i, b_i$ and $c_j, d_j$ are the symplectic pairs of homology generators of the surfaces $\Sigma_g$ and $\Sigma_h$, respectively. Assume that $X_1$ is obtained from $X_0$ via Luttinger surgeries along some of these homologically essential Lagrangian tori in $X_0$, such that: Each surgery is performed with surgery curve equal to one of $a_i, b_i, c_j, d_j$ carried on the torus and with surgery coefficient equal to $1/n$ with respect to the Lagrangian framing, for some $n \in \Z$. In the present article, we shall call these new symplectic manifolds \emph{surgered product manifolds}. Then we have 

\begin{lem}\label{surgered-prop-1}
All surgered product manifolds obtained from $\Sigma_{g} \times \Sigma_{h}$ with $b_{1} > 0$, are BF-admissible, for $g, h$ are positive odd integers.   
\end{lem}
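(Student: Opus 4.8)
The plan is to verify the three conditions of Definition \ref{BF-adm-def} one at a time for an arbitrary surgered product manifold $X_1$ obtained from $X_0 = \Sigma_g \times \Sigma_h$ with $g, h$ odd and $b_1(X_1) > 0$. The overarching principle, as flagged at the end of Subsection \ref{luttinger}, is that condition (1) is controlled symplectically while conditions (2) and (3) are purely bookkeeping about the homological effect of $1/n$-Luttinger surgeries along the tori $a_i \times c_j$, etc.

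First I would handle condition (1). Since each surgery is a $1/n$-Luttinger surgery performed with respect to the Lagrangian framing, the result $X_1$ carries a symplectic form $\omega_{X_1}$ by the Auroux--Donaldson--Katzarkov construction cited above. Take $\Gamma_{X_1}$ to be the canonical $\mathrm{spin}^c$ structure of $(X_1, \omega_{X_1})$. Then $c_1^2(\mathcal{L}_{\Gamma_{X_1}}) = c_1^2(X_1,\omega_{X_1}) = 2\eu(X_1) + 3\sign(X_1)$ automatically, and by Taubes' theorem $SW_{X_1}(\Gamma_{X_1}) = \pm 1$, hence $\equiv 1 \pmod 2$; here one needs $b^+(X_1) > 1$, which holds because $\eu$ and $\sign$ are unchanged under Luttinger surgery so $b^+(X_1) = b^+(X_0) = 2gh - g - h + \ldots$ is large (and in any case $\geq (1+\sign)/2 \cdots$) — I would just note $b^+(X_1) = b^+(\Sigma_g \times \Sigma_h) > 1$ for $g,h \geq 1$ with $(g,h)\neq(0,0)$, which is immediate. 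This disposes of condition (1) uniformly, independent of which tori were surgered.

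Next, conditions (2) and (3), which are where the real content lies. For condition (2): $\sign$ is a Luttinger invariant, and $\eu$ too, so $b^+(X_1) - b^-(X_1) = b^+(X_0) - b^-(X_0)$; combined with $b_2(X_1) = b_2(X_0) - 2k$ and $b_1(X_1) = b_1(X_0) - k$ (where $k$ is the number of surgeries performed along homologically essential tori with nullhomologous meridian, as recorded after \eqref{bettilog}), one computes $b^+(X_1) - b_1(X_1)$ in terms of $b^+(X_0) - b_1(X_0)$ and $k$. For $X_0 = \Sigma_g \times \Sigma_h$ one has $b_1(X_0) = 2g + 2h$, $b^+(X_0) = 2gh + 1$, so $b^+(X_0) - b_1(X_0) = 2gh - 2g - 2h + 1 = 2(g-1)(h-1) - 1 \equiv 3 \pmod 4$ exactly when $(g-1)(h-1)$ is even, i.e. always when at least one of $g,h$ is odd — and with $g,h$ both odd, $(g-1)(h-1) \equiv 0 \pmod 4$, giving $\equiv 3 \pmod 4$ on the nose. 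Then I must check the congruence is preserved under each surgery: each homologically essential Luttinger surgery drops $b^+$ by $1$ and $b_1$ by $1$ (the class $[S^1_\lambda]$ being essential kills one generator of $H_1$ and the hyperbolic pair it sits in from $H_2$), so $b^+ - b_1$ changes by... one must check it changes by a multiple of $4$, or rather is simply unchanged — indeed dropping both $b^+$ and $b_1$ by $1$ leaves $b^+ - b_1$ fixed. So condition (2) is inherited from $X_0$. For condition (3): I would pick the generators $\mathfrak{e}_1, \ldots, \mathfrak{e}_s$ of $H^1(X_1;\Z)$ to be (Poincaré duals of) the surviving $1$-cycles coming from the $a_i, b_i, c_j, d_j$ not killed by surgery, and compute $\mathfrak{S}^{ij}(\Gamma_{X_1}) = \tfrac12 \langle c_1(\mathcal{L}_{\Gamma_{X_1}}) \cup \mathfrak{e}_i \cup \mathfrak{e}_j, [X_1]\rangle$. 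On $X_0 = \Sigma_g \times \Sigma_h$, the canonical class is $K = (2g-2)[\text{pt}\times\Sigma_h]^{\vee} + (2h-2)[\Sigma_g\times\text{pt}]^{\vee}$ (PD of $(2g-2)\Sigma_h^{\text{fiber}} + \ldots$), and the triple product $\langle K \cup \mathfrak{e}_i \cup \mathfrak{e}_j, [X_0]\rangle$ is computed by intersection theory in the product: it is nonzero only when $\mathfrak{e}_i, \mathfrak{e}_j$ pair to a point class in one factor, yielding an even multiple $(2g-2)$ or $(2h-2)$ of a symplectic intersection number, hence $\mathfrak{S}^{ij}(\Gamma_{X_0}) \in \Z$ is even — wait, it equals $(g-1)(\cdots)$ or $(h-1)(\cdots)$, which is even precisely when $g$ (resp. $h$) is odd. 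Then I track how this survives Luttinger surgery: the canonical class of $X_1$ restricted to $X_1 \setminus N(L) = X_0 \setminus N(L)$ agrees with that of $X_0$, the relevant cohomology classes $\mathfrak{e}_i$ extend over the surgery torus, and the triple cup product is computed in the complement, so $\mathfrak{S}^{ij}(\Gamma_{X_1}) \equiv \mathfrak{S}^{ij}(\Gamma_{X_0}) \pmod 2$ — up to correction terms involving the surgery coefficient $1/n$, which I expect to contribute $n \cdot (\text{even})$ because the surgery curve is a single $a_i, b_i, c_j$ or $d_j$ and its self-triple-products with $K$ inherit the same evenness.

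The main obstacle I anticipate is condition (3): making the claim "$\mathfrak{S}^{ij}$ is independent $\bmod 2$ of the choice of generators and of the surgeries" genuinely rigorous. Two subtleties must be addressed. One is that $\mathfrak{S}^{ij}(\Gamma_X) \bmod 2$ a priori depends on the chosen generating set $\mathfrak{e}_1, \ldots, \mathfrak{e}_s$; I would either invoke the relevant lemma from \cite{ishi-1} showing this parity is in fact a well-defined invariant (it should follow from the fact that a change of basis of $H^1$ is by a $\mathrm{GL}_s(\Z)$ matrix and $\mathfrak{S}$ transforms as a quadratic-ish object so its diagonal/off-diagonal parities are $\mathrm{GL}_s(\Z)$-invariant), or argue directly for our explicit basis. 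The second is computing the honest effect of a $1/n$-surgery on the triple product: one writes $X_1 = (X_0 \setminus N(L)) \cup_\phi (D^2 \times T^2)$, uses that $c_1(\mathcal{L}_{\Gamma_{X_1}})$ and both $\mathfrak{e}_i$ are pulled back from classes defined on the pieces, and applies a Mayer--Vietoris / relative-cup-product computation on the complement $X_0 \setminus N(L)$; since the gluing affects only how a single meridian-longitude pair is identified, the correction to $\langle K \cup \mathfrak{e}_i \cup \mathfrak{e}_j, [X_1]\rangle$ relative to $\langle K \cup \mathfrak{e}_i \cup \mathfrak{e}_j, [X_0]\rangle$ is a bounded expression linear in $n$ whose coefficient is, by the product structure of $X_0$, an even integer — after which $\bmod 2$ everything reduces to the product computation, where oddness of $g$ and $h$ gives the result. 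The final sentence of the lemma — that every primary Kodaira surface arises this way from $T^2 \times T^2$ — I would handle separately by exhibiting the explicit pair of Luttinger surgeries on $T^4$ that produces the primary Kodaira surface (a standard fact), and then observing it falls under the case $g = h = 1$ of the above (with $b_1 = 3 > 0$), so BF-admissibility follows from the general argument.
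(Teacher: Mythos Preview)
Your approach to conditions (1) and (2) is essentially the paper's. One slip: you write $b^+(X_1) = b^+(\Sigma_g \times \Sigma_h)$ on the grounds that $\eu$ and $\sign$ are preserved, but $b^+$ itself is not a Luttinger invariant---as you yourself note a paragraph later, each surgery drops $b^+$ by one. The paper handles $b^+ > 1$ by observing that $\Sigma_g \times \{pt\}$, $\{pt\} \times \Sigma_h$, together with at least one surviving pair of Lagrangian tori, furnish two hyperbolic pairs in $H_2(X)$.

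The substantive divergence is in condition (3), exactly where you anticipate difficulty. Your plan---compute $K_{X_0}$ explicitly, evaluate the triple products on the product to get multiples of $g-1$ or $h-1$, then track correction terms through each surgery via Mayer--Vietoris---is plausible in principle, but you leave the surgery step as a hope (``I expect to contribute $n \cdot (\text{even})$''). The paper sidesteps this entirely by working intrinsically in $(X, \omega_X)$: for surviving generators $\mathfrak{e}_i, \mathfrak{e}_j$ coming from different factors, the class $\mathfrak{e}_i \cup \mathfrak{e}_j \in H^2(X)$ is Poincar\'e dual to one of the product tori $a_i \times c_j$, $a_i \times d_j$, $b_i \times c_j$, $b_i \times d_j$ that survives in $X$. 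These tori are still Lagrangian in $(X,\omega_X)$, since the Luttinger surgeries are supported away from them and preserve $\omega$ on the complement. Because $c_1$ of a symplectic $4$-manifold evaluates to zero on any Lagrangian torus (trivial tangent and normal bundles), one gets $\mathfrak{S}^{ij}(\Gamma_X)=0$ on the nose---no $\bmod\,2$ argument, no comparison with $X_0$, no Mayer--Vietoris. The Lagrangian property of the surviving tori, which you never invoke, is the key simplification you are missing.

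Your explicit computation does handle one residual case cleanly: when $\mathfrak{e}_i, \mathfrak{e}_j$ both come from the same factor (say the pair $\alpha_p, \beta_p$), their cup product is PD to the symplectic surface $\{pt\}\times\Sigma_h$ rather than a Lagrangian torus, and adjunction gives $c_1 \cdot [\Sigma_h] = 2-2h$, so $\mathfrak{S}^{ij}=1-h$ is even since $h$ is odd. The paper's phrasing is looser here, but note that this case also needs no surgery-tracking: $\{pt\}\times\Sigma_h$ sits in $X$ disjoint from all surgery tori and the adjunction computation is done directly in $X$.

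Finally, the primary Kodaira surface claim is treated in the paper as a separate lemma, not as part of this one.
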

\begin{proof}
Assume $g$ and $h$ are both odd positive integers. Since 
\[ b^+(\Sigma_g \x \Sigma_h) = 1 + 2 gh \, , \ \text{and} \ b_1(\Sigma_g \x \Sigma_h) =2(g + h) ,\]
the difference $b^+-b_1 \equiv 1 + 2(gh - g-h) \equiv 3 \pmod{4}$. If we perform a torus surgery along any one of the product Lagrangian tori with the surgery curve equals any one of the homology generators (namely $a_i$, $b_i$, $c_j$ or $d_j$ for $i= 1, \ldots g$, $j= 1, \ldots, h$) and the surgery coefficient equals $1/n$ with respect to the Lagrangian framing, then $b_1$ drops by one, as seen from the equation (\ref{bettilog}).
Note that we can just compute $b_1$ in $\Q$-coefficients so $n$ can be any integer here. \par

Since the torus surgery does not change the Euler characteristic, $b_2$ of the new manifold we get drops by two. Moreover, what dies in the new homology is nothing but the homology class of the torus we performed the surgery along as well as the homology class of the torus dual to it. (A detailed analysis of this fact can be found in \cite{hl}.) These Lagrangian tori made up a hyperbolic pair in the second homology of the original manifold, so we see that each one of $b^+$ and $b^-$ drop by one. Hence the difference $b^+ - b_1$ remains the same and equals to $3 \pmod{4}$ after the surgery, satisfying the second condition in Definition \ref{BF-adm-def}. \par 

Now, let $(X, \omega_X)$ be the resulting symplectic $4$-manifold obtained by a sequence of Luttinger surgeries of this sort in $X_0$. To guarantee that \linebreak $b^+(X) > 1$, one just should not get taken by the heat of this process and kill all pairs of Lagrangian tori. It suffices to leave one such pair; $\Sigma_g \x \{pt \}$ and $\{ pt \} \x \Sigma_h$ still descend to the new symplectic manifold as a hyperbolic pair, and together with another pair of Lagrangian tori we get $b^+(X_1) > 1$ as required. \par

For the class $\Gamma_{X}$ take any almost complex structure compatible with the symplectic form, on which Seiberg-Witten invariant evaluates as $1$ (and thus equals $1 \pmod{2}$) by Taubes' celebrated work in \cite{ta}. This particularly implies that the first condition in Definition \ref{BF-adm-def} is satisfied. \par 

The new set of generators for $H_1(X)$ is given by all $a_i$, $b_i$, $c_j$ or $d_j$ for $i= 1, \ldots g$, $j= 1, \ldots, h$ except for the used surgery curves. Now note that $a_p \x b_q$ and $c_r \x d_s$ for $p,q = 1, \ldots, g$ and $r,s = 1, \ldots, h$ (whichever still exist) are all trivial in $H_2(X)$. On the other hand all other possible products were prescribing Lagrangian tori in the symplectic manifold $X_0$. Since the canonical class of $X_0$ can be supported away from all these tori \cite{ADK}, these tori are still Lagrangian in $X$. Thus, if we choose $\mathcal{\Gamma}_X$ as an almost complex structure compatible with the symplectic form on $X$, then the evaluation $\mathfrak{S}^{ij}(\Gamma _X):=\frac{1}{2}< c_1(\Gamma _X) \cup \mathfrak{e}_i \cup \mathfrak{e}_j, [X] >$ is either trivially zero to begin with or is equal to evaluating $\omega$ on a Lagrangian torus, and thus, vanishes in all possible cases, satisfying the third condition in Definition \ref{BF-adm-def}.
\end{proof}

In addition to the manifolds of the type $\Sigma_g \x \Sigma_h$ for $g,h$ odd, it was observed in \cite{ishi-1} that a primary Kodaira surface is also BF-admissible. Both of these families of manifolds are indeed subfamilies of surgered manifolds, as we now show for the non-trivial case: \footnote{Tian-Jun Li has informed us that this observation was known to him. Also see \cite{hl}.}  

\begin{figure}[h]
\begin{center}
\includegraphics[scale=1.5]{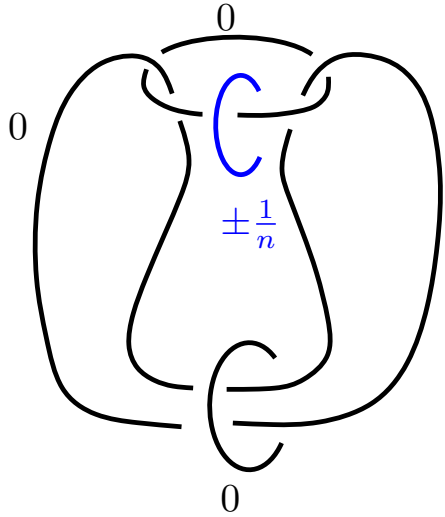}   
\caption{An $S^1$ invariant surgery diagram for a primary Kodaira surface.}
\end{center}
\end{figure}

\begin{lem}\label{surgered-prop-2}
A primary Kodaira surface $K$ is a surgered product manifold. In particular, $K$ is BF-admissible. 
\end{lem}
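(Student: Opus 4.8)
The plan is to exhibit a primary Kodaira surface $K$ explicitly as the result of Luttinger surgeries on the $4$-torus $T^2 \times T^2 = \Sigma_1 \times \Sigma_1$, using only surgery curves drawn from the standard homology generators $a_1, b_1$ of the first factor and $c_1, d_1$ of the second, with surgery coefficients of the form $1/n$ in the Lagrangian framing. Once such a presentation is produced, Lemma~\ref{surgered-prop-1} applies directly: a primary Kodaira surface has $b_1(K) = 3 > 0$, so it is BF-admissible, and the "in particular" clause follows for free. Thus the entire content is the first sentence, and the heart of the matter is the explicit surgery description.

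Concretely, I would recall that a primary Kodaira surface is a $T^2$-bundle over $T^2$ with $b_1 = 3$, $b_2 = 4$, $\eu = 0$, $\sign = 0$, and can be presented as a nilmanifold: the quotient of a $4$-dimensional nilpotent Lie group (the product of the Heisenberg group with $\R$, modulo a lattice). The key is that such a manifold differs from $T^4$ by a single "twist": its fundamental group is the integral Heisenberg-type group $\langle x_1, x_2, x_3, x_4 \mid [x_1,x_2] = x_3,\ x_3, x_4\ \text{central} \rangle$, whereas $\pi_1(T^4) = \Z^4$. Comparing this with the Seifert--Van Kampen presentation~(\ref{LogSVK}) for a logarithmic transform, one sees that introducing the single relation $[x_1, x_2] = x_3$ is exactly what a $1/1$ Luttinger surgery along the Lagrangian torus $b_1 \times c_1$ (say) with surgery curve one of the generators accomplishes — the meridian of that Lagrangian torus, when read in $\pi_1$ of the complement, is precisely the commutator $[a_1, c_1]$-type element, and setting it equal to a push-off introduces the nilpotent relation. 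This computation is carried out in detail in \cite{hl} and is also indicated by the $S^1$-invariant surgery diagram in the figure above; I would reproduce enough of it to identify the torus, the surgery curve among $\{a_1, b_1, c_1, d_1\}$, and the coefficient $1/1$, and check that the meridian of the surgery torus is nullhomologous and the push-off curve is homologically essential in the complement (so that the Betti-number bookkeeping of Subsection~\ref{luttinger} gives $b_1 = 4 - 1 = 3$, consistent with $K$).

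The main obstacle I anticipate is not conceptual but bookkeeping: one must verify that the Lagrangian torus used is genuinely one of the product tori $a_i \times c_j$, $b_i \times d_j$, etc.\ (here with $g = h = 1$, so just $a_1 \times c_1$, $a_1 \times d_1$, $b_1 \times c_1$, $b_1 \times d_1$), that the symplectic Luttinger surgery along it with the prescribed surgery curve and coefficient $1/1$ produces a $4$-manifold whose fundamental group is exactly the Kodaira nilpotent group, and that this symplectic $4$-manifold is in fact diffeomorphic to $K$ (not merely homotopy equivalent or homeomorphic). For the last point one can invoke the classification of $T^2$-bundles over $T^2$, or the fact that a symplectic $4$-manifold with the nilpotent fundamental group above, the right characteristic numbers, and a symplectic form is forced to be a primary Kodaira surface — alternatively one can directly recognize the surgered manifold's fibration structure from the $S^1$-invariant diagram. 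I would lean on \cite{hl} for the diffeomorphism type and focus the written proof on identifying the surgery data and confirming $b_1(K) = 3 > 0$, after which Lemma~\ref{surgered-prop-1} closes the argument.
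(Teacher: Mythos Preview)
Your approach is correct and essentially the same as the paper's: both perform a single Luttinger surgery on $T^4=\Sigma_1\times\Sigma_1$ along one of the product Lagrangian tori (the paper takes $a\times c$) and identify the result as a primary Kodaira surface, after which BF-admissibility follows from Lemma~\ref{surgered-prop-1}. The only differences are cosmetic --- the paper pins down the diffeomorphism type via an $S^1$-invariant Kirby diagram (reducing the Luttinger surgery to a Dehn surgery on the $T^3$ factor) and, alternatively, via Geiges' classification of Lagrangian torus bundles over $T^2$, rather than via your $\pi_1$ computation --- and note that to realize \emph{every} primary Kodaira surface you should allow the surgery coefficient $1/n$ for arbitrary $n$, as the paper does, rather than fixing $n=1$.
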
 

\begin{proof}
Take $g=h=1$ and perform one Luttinger surgery along any one of the homologically essential tori listed above. Without loss of generality we can assume that this torus is $a \x c$ (where we drop the subindices as $g=h=1$). The resulting manifold can be described by the dimensionally reduced Kirby diagram given below. In the diagram one depicts $X_0=T^2 \x T^2$ as $S^1 \x T^3$ where the first $S^1$ component corresponds to $a$, and not drawn. Then since the diagram and the surgery are set in an $S^1$ invariant way, the Luttinger surgery amounts to performing a Dehn surgery along $c$ with coefficient $n$ in the $T^3$ component \cite{ABP}. The resulting diagram describes the smooth type of a primary Kodaira surface. 

Another way to see this is through the classification of Lagrangian torus bundles over tori (see \cite{geiges}). The projection onto $b \x d$ describes a Lagrangian torus bundle on $T^4$ equipped with the product symplectic form (i.e. the sum of the pullbacks of the volume forms on tori $a \x b$ and $c \x d$). The reader can verify that the Luttinger surgery along $a \x c$ in question yields a new Lagrangian torus bundle over a torus, where the fiber now is necessarily inessential in homology. As the result is a symplectic $4$-manifold admitting a Lagrangian torus bundle over a torus, it is a primary Kodaira surface. 
\end{proof}

\noindent Theorem \ref{main-A} now follows from Lemmas \ref{surgered-prop-1} and \ref{surgered-prop-2}.  

\subsection{Families obtained from surgered product manifolds}\label{sec-3}

We are now going to look at large families of $4$-manifolds constructed using solely the surgered products as building blocks. Such families, spanning a large portion of the geography plane were obtained in \cite{ABBKP}:

\begin{thm}[Theorem A in \cite{ABBKP}] \label{ABBKPthm}
Let $a$ and $b$ denote integers satisfying  $2a+3b \geq 0$, and $a+ b \equiv 0 \pmod{4}.$ If, in addition, $b \leq -2,$ then there exists a simply connected minimal symplectic $4$-manifold with Euler characteristic $a$ and signature $b$ and odd intersection form, except possibly for $(a,b)$ equal to $(7, -3)$, $(11, -3)$, $(13, -5)$, or $(15, -7)$.
\end{thm}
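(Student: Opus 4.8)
The plan is to prove Theorem \ref{ABBKPthm} by a single, uniform construction: assemble the desired $4$-manifold out of a short list of symplectic building blocks via repeated symplectic fiber sums along positive-genus surfaces, then kill the fundamental group by Luttinger surgeries, and finally invoke a minimality criterion. First I would rewrite the target invariants in geography coordinates. A simply connected $4$-manifold with $\eu = a$ and $\sign = b$ has $b^+ = (a+b)/2 - 1$ and $c_1^2 = 2\eu + 3\sign = 2a+3b$, so with $\chi_h := (a+b)/4$ the hypotheses $2a+3b \ge 0$, $a+b \equiv 0 \pmod 4$, $b \le -2$ say exactly that $\chi_h$ is a positive integer and $0 \le c_1^2 \le 8\chi_h - 2$. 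Thus the region to be realized is the triangle lying on or above the line $c_1^2 = 0$ and strictly below the signature-zero line $c_1^2 = 8\chi_h$, and the construction must sweep all of it (apart from the four exceptional pairs).

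Next I would fix a family of building blocks with controlled topology and a rich supply of homologically essential Lagrangian tori: elliptic surfaces $E(n)$ (with $n$ odd, to seed a non-spin summand), products $\Sigma_g \times \Sigma_h$ and their Luttinger-surgered cousins --- the surgered product manifolds of Subsection \ref{sec-2} --- and a genus-$2$ block with nearly trivial $\pi_1$ obtained by Luttinger surgeries on a product, each carrying a distinguished symplectic surface along which to fiber-sum. Using Gompf's symplectic fiber sum along genus-$g$ surfaces \cite{gom}, whose effect on $\eu$ and $\sign$ is the usual additive one (Novikov additivity for $\sign$, and $\eu$ additive with a $2g-2$ correction per gluing), I would show that by varying the genus and the number and type of summands one lands on every prescribed $(c_1^2, \chi_h)$ in the claimed region; one avoids a blow-up (which would destroy minimality) and instead arranges the non-spin condition by including a single non-spin block, so that the resulting intersection form carries a class of odd square. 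Throughout, the Seifert--Van Kampen theorem applied to the fiber-sum decomposition presents $\pi_1$ as an explicit amalgamated product, which is kept small by choosing the gluing maps so that each surface subgroup normally generates its factor.

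Then I would use Luttinger surgeries, set up as in Subsection \ref{luttinger}, along the Lagrangian tori inherited from the building blocks: each $(L,\lambda,1/n)$ surgery is symplectic \cite{ADK}, leaves $\eu$ and $\sign$ fixed, and by (\ref{LogSVK}) adds the relation $[\mu_L][S^1_{\lambda}]^n = 1$ to $\pi_1$; a finite sequence of them, chosen by inspection of the presentation, trivializes the group and yields a simply connected symplectic $4$-manifold with the prescribed $\eu$, $\sign$, and odd intersection form. Minimality would then follow from Usher's theorem on symplectic sums, which guarantees that a symplectic fiber sum is minimal outside a short explicit list of exceptional configurations (e.g.\ a gluing surface that is a sphere, or a section of a sphere bundle), none of which occurs for our blocks and surfaces, combined with the fact that Luttinger surgery preserves minimality (no such surgery creates an exceptional sphere, as one checks via the Seiberg--Witten obstruction to $(-1)$-spheres in minimal symplectic $4$-manifolds, cf.\ \cite{ta}). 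The pairs $(7,-3)$, $(11,-3)$, $(13,-5)$, $(15,-7)$ sit precisely at the small values of $\chi_h$ at which the available blocks fail to telescope down far enough, and would be left for separate ad hoc treatment or recorded as genuine exceptions.

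The hard part is the uniformity of the middle steps carried out \emph{simultaneously}: one must exhibit a single scheme whose fiber sums and Luttinger surgeries sweep the whole triangular region while at the same time (i) reducing $\pi_1$ to the trivial group, (ii) keeping the intersection form odd, and (iii) preserving minimality. Any one of these is routine in isolation, but meeting all three at once forces delicate bookkeeping of the Lagrangian tori and of their images under each fiber sum --- one must guarantee that enough essential tori survive to kill $\pi_1$, yet that none of the surgeries resurrects an exceptional sphere --- and this combinatorial-geometric accounting, together with the careful verification that Usher's hypotheses genuinely apply to the final manifold, is the technical core of the argument.
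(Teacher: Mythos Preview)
This theorem is not proved in the present paper at all: it is quoted verbatim as Theorem~A of \cite{ABBKP} and used as a black box. The only thing the paper offers is a short \emph{description} of how the ABBKP manifolds are built (the two itemized operations following the statement, and the ``telescoping triple'' paragraph in the proof of Theorem~\ref{main-B}), so any comparison has to be against that description and against \cite{ABBKP} itself.

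Your sketch is in the right spirit --- surgered products, symplectic fiber sums, Luttinger surgeries to kill $\pi_1$, Usher for minimality --- but it contains one substantive error and one structural omission. The error is your claim that ``one avoids a blow-up (which would destroy minimality) and instead arranges the non-spin condition by including a single non-spin block.'' In fact the ABBKP construction, as summarized in this paper immediately after the theorem statement, \emph{does} blow up: step (1) is ``symplectic blow-ups at points on the symplectic surfaces $\Sigma_g \times \{pt\}$ or $\{pt\} \times \Sigma_h$.'' Blowing up is precisely how one steps $c_1^2$ down by one at fixed $\chi_h$ and how the intersection form becomes odd. Minimality is \emph{not} lost, because the exceptional spheres are arranged to meet the fiber-sum surfaces, and Usher's theorem \cite{Usher} then certifies the fiber sum minimal; this is exactly the mechanism spelled out in the proof of Theorem~\ref{main-B} here. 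Your alternative of importing a non-spin block like $E(n)$ with $n$ odd is not what is done, and without the blow-up you lose the unit step in $c_1^2$ that makes the geography argument go through uniformly.

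The structural omission is the \emph{telescoping triple} machinery. In \cite{ABBKP} the $\pi_1$ bookkeeping is not done by an ad hoc inspection of a Seifert--Van Kampen presentation after the fact; rather, each building block is packaged as a triple $(X,T_1,T_2)$ with $\pi_1(X)\cong\mathbb Z^2$ and carefully prescribed images of $\pi_1(T_i)$, so that fiber-summing two telescoping triples yields another telescoping triple and two final Luttinger surgeries kill $\pi_1$ at the end. This inductive device is what makes ``simply connected'' automatic across the whole region and is the actual technical core; your proposal acknowledges the difficulty but does not supply the mechanism that resolves it.
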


Note that the missing four lattice points given in the statement, using the minimal symplectic $\CP^2 \# 2 \CPb$ constructed by Akhmedov and Park, can be realized by the same methods of \cite{ABBKP}, as shown in \cite{A-P-inv}. For the lack of a better name, we will call all these manifolds as \textit{ABBKP manifolds} in short. A close look at these examples show that they are all obtained from surgered product manifolds via a couple of operations. Namely:

\begin{enumerate} 
\item[(1)] Symplectic blow-ups at points on the symplectic surfaces $\Sigma_g \x \{pt \}$ or $\{ pt \} \x \Sigma_h$ in the surgered product manifolds; and
\item[(2)] Symplectic fiber sums along \textit{symplectic} surfaces which are obtained from copies of $\Sigma_g \x \{pt\}$, $\{ pt \} \x \Sigma_h$ and exceptional spheres that might have been introduced during blow-ups.
\end{enumerate}  

Said differently, these manifolds are obtained by using symplectic building blocks $\Sigma_g \x \Sigma_h$ ---where $g$ and $h$ are \emph{not} necessarily odd, the above two operations, \emph{and} Luttinger surgeries with coefficients $\pm 1$ are performed along the product Lagrangian tori contained in them. This is because these Lagrangian tori are away from the standard symplectic surfaces $\Sigma_g \x \{pt \}$ or $\{ pt \} \x \Sigma_h$, and remain Lagrangian after blow-ups of fiber sums. Therefore, one can perform the above two operations and the Luttinger surgeries in any order to get the resulting symplectic $4$-manifold. 

To meet the first and second conditions in Definition \ref{BF-adm-def}, we only deal with those $X$ with $b^+(X)\equiv 3 \pmod{4}$. Since $b_1(X)=0$, the third condition in Definition \ref{BF-adm-def} is satisfied vacuously for these manifolds. Now, if we \textit{undo} any of the Luttinger surgeries, from our previous arguments in the proof of Lemma \ref{surgered-prop-1} we see that we re-introduce the hyperbolic pair of Lagrangian tori in the new resulting symplectic manifold, but all the conditions in Definition \ref{BF-adm-def} are still satisfied. Hence we see that:

\begin{thm} \label{undoingABBKP}
If one undoes any collection of the Luttinger surgeries involved in the construction of any one of the ABBKP manifold with $b^+ \equiv 3 \pmod{4}$, the resulting manifold meets all the conditions in Definition \ref{BF-adm-def}.
\end{thm}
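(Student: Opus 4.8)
The plan is to track, step by step, the effect of \emph{undoing} a collection of Luttinger surgeries on each of the three conditions in Definition \ref{BF-adm-def}, starting from an ABBKP manifold $X$ with $b^+(X) \equiv 3 \pmod 4$ and building up to the resulting manifold $\widetilde X$. First I would record the key structural observation, already isolated in the paragraphs preceding the statement: since the product Lagrangian tori sit away from the standard symplectic surfaces $\Sigma_g \x \{pt\}$ and $\{pt\} \x \Sigma_h$ and from the exceptional spheres introduced by blow-ups, the blow-ups, the symplectic fiber sums, and the Luttinger surgeries with coefficient $\pm 1$ all commute as operations; hence $\widetilde X$ is itself a symplectic $4$-manifold, obtained from the same surgered product building blocks by performing a (possibly smaller) set of the Luttinger surgeries together with the same blow-ups and fiber sums. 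In particular $\widetilde X$ is symplectic, so by Taubes (\cite{ta}) the canonical $\text{spin}^c$-structure $\Gamma_{\widetilde X}$ associated to the symplectic form has $SW_{\widetilde X}(\Gamma_{\widetilde X}) = \pm 1$, whence $\equiv 1 \pmod 2$, and the adjunction/index formula gives $c_1^2(\mathcal L_{\Gamma_{\widetilde X}}) = 2\eu(\widetilde X) + 3\sign(\widetilde X)$. This disposes of condition (1).

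Next I would handle condition (2), the congruence $b^+(\widetilde X) - b_1(\widetilde X) \equiv 3 \pmod 4$. Here I would invoke exactly the homological bookkeeping from the proof of Lemma \ref{surgered-prop-1}: undoing a single Luttinger surgery is itself a torus surgery (along the core torus of the surgery, with the appropriate surgery curve and coefficient) that reintroduces a hyperbolic pair of Lagrangian tori into $H_2$. Consequently $b_1$ goes \emph{up} by one and each of $b^+$ and $b^-$ goes up by one, so the difference $b^+ - b_1$ is unchanged; iterating over the collection of undone surgeries, $b^+(\widetilde X) - b_1(\widetilde X) = b^+(X) - b_1(X) = b^+(X) \equiv 3 \pmod 4$, since $b_1(X) = 0$ for the ABBKP manifold. (One should also check $b^+(\widetilde X) > 1$, but this is immediate: $b^+(\widetilde X) \geq b^+(X) \geq 3$, as $b^+(X) \equiv 3 \pmod 4$ and $b^+(X) \geq 1$ force $b^+(X) \geq 3$.)

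Finally I would verify condition (3), $\mathfrak S^{ij}(\Gamma_{\widetilde X}) \equiv 0 \pmod 2$ for all $i,j$. The point is that $H^1(\widetilde X;\Z)$ is generated by (a subset of) the loops $a_i, b_i, c_j, d_j$ coming from the factors, precisely those not used as surgery curves among the \emph{undone} Luttinger surgeries. As in the proof of Lemma \ref{surgered-prop-1}, for any two such generators $\mathfrak e_i, \mathfrak e_j$ the Poincaré dual of their cup product is either trivial in $H_2(\widetilde X)$ (this happens when the relevant product of factor-circles is $a_p \x b_q$ or $c_r \x d_s$, which bounds), or is represented by one of the product tori, which remains Lagrangian in $\widetilde X$ because the canonical class of the original $\Sigma_g \x \Sigma_h$ — and hence of $\widetilde X$, built from it by operations supported away from these tori — can be supported away from them (\cite{ADK}). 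In the first case $\langle c_1(\mathcal L_{\Gamma_{\widetilde X}}) \cup \mathfrak e_i \cup \mathfrak e_j, [\widetilde X]\rangle = 0$; in the second, pairing $c_1$ of the canonical class (equivalently, up to sign, the symplectic form) against a Lagrangian torus gives zero. Either way $\mathfrak S^{ij}(\Gamma_{\widetilde X}) = 0$, so condition (3) holds. Combining the three verifications completes the proof.

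I expect the main obstacle to be not any single computation but the care required in the first step: rigorously justifying that undoing the Luttinger surgeries commutes with the blow-ups and fiber sums used in the ABBKP construction, so that the end result is genuinely symplectic with an identifiable canonical class and an identifiable generating set for $H^1$ in terms of the factor circles. Once that structural picture is pinned down, conditions (1)–(3) follow by directly reusing the arguments of Lemma \ref{surgered-prop-1} applied to $\widetilde X$; the only residual subtlety is making sure the generators of $H^1(\widetilde X;\Z)$ really are the expected factor-circle classes (rather than, say, new classes created by the fiber sums), which is where one leans on the fact that $b_1(X)=0$ and each undone surgery contributes exactly one new $H^1$ class from a factor circle.
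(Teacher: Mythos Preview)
Your proposal is correct and follows essentially the same approach as the paper. The paper's argument is contained in the paragraphs immediately preceding the theorem statement rather than in a separate proof environment: it records the commutativity of the Luttinger surgeries with the blow-ups and fiber sums, observes that the ABBKP manifold $X$ with $b^+(X)\equiv 3\pmod 4$ and $b_1(X)=0$ already satisfies all three conditions (the third vacuously), and then declares that undoing any surgeries ``re-introduces the hyperbolic pair of Lagrangian tori'' so that ``from our previous arguments in the proof of Lemma~\ref{surgered-prop-1} \ldots\ all the conditions in Definition~\ref{BF-adm-def} are still satisfied''---exactly the verification you spell out in detail for conditions (1)--(3).
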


\noindent Undoing these surgeries in simply-connected end products will re-introduce $b_1$ in a straightforward fashion. The change in fundamental group however is more subtle, and is to our interest mostly when we only undo one of the surgeries to get manifolds with fundamental group $\mathbb Z$ and perform the last surgery with general Luttinger surgery coefficient $\pm 1/p$ instead of $\pm 1$ to get ${\mathbb Z} \x {\mathbb Z}_m$, for which we can use homeomorphism criteria given by the following theorems:

\begin{thm}[Hambleton-Teichner \cite{h-t}, see also \cite{kru-Lee}.] \label{HTthm}
\ Let $X$ be a \linebreak smooth closed oriented $4$-manifold with infinite cyclic fundamental group. $X$ is classified up to homeomorphism by the fundamental group, the intersection on $H_{2}(X, {\mathbb Z})/Tors$ and the $w_{2}$-type. If in addition, $b_{2}(X) - |\sign(X)| \geq 6$, then $X$ is homeomorphic to the connected sum of $S^{1} \times S^{3}$ with a unique closed simply connected $4$-manifold.  In particular, $X$ is determined up to homeomorphism by its second Betti number $b_{2}(X)$, its signature $\tau(X)$ and its $w_{2}$-type. Particularly, $X$ is either spin or non-spin depending on the parity of its intersection form. 
\end{thm}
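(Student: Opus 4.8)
The plan is to obtain this from the topological surgery classification of closed $4$-manifolds, which is available here because $\mathbb{Z}$ is a \emph{good} fundamental group in the sense of Freedman--Quinn: for topological $4$-manifolds with $\pi_1\cong\mathbb{Z}$ both the $s$-cobordism theorem and the surgery exact sequence hold. I would organize the argument via Kreck's modified surgery, supplemented by the classical surgery exact sequence to pin down the structure set. The first point is that two closed oriented $4$-manifolds carrying normal $1$-smoothings in a common normal $1$-type $B$ and bordant over $B$ become homeomorphic after finitely many connected sums with copies of $S^2\times S^2$ (or, in the non-spin case, the twisted bundle $S^2\widetilde{\times}S^2$); the second is that this stabilization can be removed once sufficiently many hyperbolic summands are present, which is exactly what the hypothesis $b_2(X)-|\sign(X)|\geq 6$ provides.

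The first step is to list the normal $1$-types. Since $B\mathbb{Z}\simeq S^1$ has $H^2(S^1;\mathbb{Z}/2)=0$, the class $w_2(X)$ is never pulled back from $B\pi_1$; hence the universal cover $\widetilde X$ is spin precisely when $X$ is, and there are exactly two normal $1$-types over $\pi_1\cong\mathbb{Z}$ -- the spin type, with $B=S^1\times B\mathrm{TOPSpin}$, and the non-spin type, a corresponding fibration over $S^1$ with fiber $B\mathrm{STOP}$. (Kreck's intermediate ``type III'' does not occur here.) This already yields the spin/non-spin dichotomy in the statement, controlled by the parity of the intersection form. Next, using $\widetilde\Omega_n(S^1)\cong\Omega_{n-1}$ for the relevant bordism theory together with the vanishing $\Omega_3^{SO}=\Omega_3^{\mathrm{STOP}}=\Omega_3^{Spin}=0$, the fourth bordism group over each $B$ is detected by the signature, the $w_2$-type, and -- in the non-spin case -- the Kirby--Siebenmann invariant (which in the spin case is forced to be $\sign(X)/8\bmod 2$). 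Finally, for $\pi_1\cong\mathbb{Z}$ the group $H_2(X;\mathbb{Z})$ is torsion-free: $H_1(X)\cong\mathbb{Z}$, so $\mathrm{Ext}(H_1(X),\mathbb{Z})=0$ and $H^2(X;\mathbb{Z})\cong\mathrm{Hom}(H_2(X),\mathbb{Z})$ is free by the universal coefficient theorem, whence $H_2(X)$ is free by Poincar\'e duality; thus the intersection form on $H_2(X;\mathbb{Z})/\mathrm{Tors}=H_2(X;\mathbb{Z})$ already recovers $b_2(X)$ and $\sign(X)$. Putting these together, two closed oriented $4$-manifolds with $\pi_1\cong\mathbb{Z}$, the same $w_2$-type, and isometric intersection forms are $B$-bordant, hence topologically stably homeomorphic by Kreck's theorem.

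It remains to destabilize and to split off the $S^1\times S^3$ summand. The action on the topological structure set is governed by $L_5(\mathbb{Z}[\mathbb{Z}])\cong L_5(\mathbb{Z})\oplus L_4(\mathbb{Z})\cong\mathbb{Z}$ via the Shaneson splitting, the $L_4(\mathbb{Z})$-summand being realized by modifications supported near the codimension-one submanifold Poincar\'e dual to the generator of $H^1(X;\mathbb{Z})$. Invoking Wall's realization of these obstructions and cancelling the resulting modifications against the added $S^2\times S^2$ (resp.\ $S^2\widetilde{\times}S^2$) summands -- which is possible precisely because $b_2-|\sign|\geq 6$ supplies enough hyperbolic pieces -- collapses the structure set, so that $b_2$, $\sign$, and the $w_2$-type (together with the residual Kirby--Siebenmann datum in the non-spin case) are the only surviving invariants. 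To exhibit the $S^1\times S^3$ summand, I would represent a generator of $\pi_1(X)$ by an embedded circle with its (automatically trivial) normal bundle, perform surgery on it to obtain a simply connected $4$-manifold, and then use the spare hyperbolic summands and Freedman's disk-embedding technology to undo the accompanying $2$-sphere surgery in the standard way, producing a decomposition $X\cong(S^1\times S^3)\,\#\,Y$; Freedman's classification of simply connected topological $4$-manifolds then identifies $Y$ uniquely from its intersection form and Kirby--Siebenmann invariant.

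The main obstacle is this last, quantitative step: upgrading \emph{stable} homeomorphism to honest homeomorphism while simultaneously splitting off $S^1\times S^3$. This is exactly where the hypothesis $b_2-|\sign|\geq 6$ is indispensable -- without the spare hyperbolic summands the Wall realizations cannot be cancelled and one is left only with the stable conclusion -- and it is also where one must keep careful track of the $w_2$-type through the connected-sum operations, since spin manifolds may be stabilized only by $S^2\times S^2$ while non-spin ones generally require $S^2\widetilde{\times}S^2$. Everything above the middle dimension -- the handle manipulations reducing to a standard model and the $\pi_2$-surgery -- is routine once Freedman--Quinn is in hand, and the bordism calculation is elementary; the Wall-realization-plus-cancellation argument, with the attendant bookkeeping of $w_2$-type and Kirby--Siebenmann invariant, is the technical heart.
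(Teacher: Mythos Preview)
The paper does not prove this theorem at all: Theorem~\ref{HTthm} is quoted from the literature (Hambleton--Teichner \cite{h-t}, with the reference to Krushkal--Lee \cite{kru-Lee}) and used as a black box in the proof of Theorem~\ref{main-B}. There is therefore no ``paper's own proof'' to compare your proposal against.

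That said, your outline is a reasonable sketch of the strategy behind the cited result. A few remarks. First, since the hypothesis is that $X$ is \emph{smooth}, the Kirby--Siebenmann invariant vanishes automatically; your bookkeeping of KS is relevant only if you want the full topological classification, which is slightly more than what the statement asks. Second, the bound $b_2-|\sign|\geq 6$ in the statement is not arbitrary: the Hambleton--Teichner paper \cite{h-t} exhibits a \emph{non-extended} hermitian form over $\mathbb{Z}[\mathbb{Z}]$, which is precisely the algebraic obstruction preventing the cancellation argument from going through with fewer hyperbolic summands. Your proposal correctly identifies that the hypothesis supplies ``enough hyperbolic pieces'' but does not explain why six is the threshold; that is the genuine content of \cite{h-t}, and any self-contained proof would have to engage with the algebra of hermitian forms over $\mathbb{Z}[\mathbb{Z}]$ rather than just invoke Wall realization abstractly. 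Third, your final paragraph on splitting off $S^1\times S^3$ via surgery on a generating circle and then ``undoing'' the $2$-sphere surgery is more delicate than you indicate: one must check that the resulting simply connected piece has the correct intersection form, and this is again where the extra hyperbolic summands are used. In short, your high-level architecture (normal $1$-types, $B$-bordism, stable homeomorphism via Kreck, destabilization via $L_5(\mathbb{Z}[\mathbb{Z}])$) is the right one, but the step you flag as the ``technical heart'' genuinely is, and the cited papers are where those details live.
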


\begin{thm}[Hambleton-Kreck \cite{h-k}] \label{HKthm} 
Let $X$ be a closed smooth oriented $4$-manifold with finite cyclic fundamental group. Then $X$ is classified up to homeomorphism by the fundamental group, the intersection form on \linebreak $H_2(X; \Z)/ Tors$, and the $\omega_2$-type. Moreover, any isometry of the intersection form can be realized by a homeomorphism.
\end{thm}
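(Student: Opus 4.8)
\medskip
\noindent\emph{Proof proposal.} This is the Hambleton--Kreck classification, and the natural route is Kreck's modified surgery theory, made available in dimension four by the fact that $\Z/n$ is a ``good'' fundamental group in the sense of Freedman--Quinn, so that topological surgery and the topological $s$-cobordism theorem are at our disposal. First I would determine the \emph{normal $1$-type} $\xi\colon B\to BSTOP$ of such an $X$; it is governed by $\pi_1(X)=\Z/n$ together with the $w_2$-type, giving the three standard cases (universal cover not spin; $X$ spin; $X$ not spin but universal cover spin), where $B$ is respectively a twisted bundle over $B\Z/n$, $B\Z/n\times BSpin$, and a nontrivial fibration over $B\Z/n$. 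One then computes the bordism group $\Omega_4(B,\xi)$ of normal $1$-smoothings by an Atiyah--Hirzebruch argument, using that $H_i(\Z/n;\Z)$ vanishes in positive even degrees; the upshot is that the $B$-bordism class of $(X,\xi)$ is detected by elementary invariants --- the signature, and in the spin case an extra $\Z$ refining it --- each visible from the intersection form on $H_2(X;\Z)/\mathrm{Tors}$ and the $w_2$-type. Hence two such manifolds $X_0,X_1$ with the same fundamental group, $w_2$-type and isometric intersection forms on $H_2/\mathrm{Tors}$ admit $B$-bordant normal $1$-smoothings; choose a bordism $W^5$ between them.

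Next I would run modified surgery on $W$. Surgeries below the middle dimension make $\pi_1(W)\to\Z/n$ an isomorphism and the inclusions $X_i\hookrightarrow W$ isomorphisms on $\pi_2$, i.e. make $W$ a ``$2$-connected'' $B$-bordism. The obstruction to improving $W$ to an $s$-cobordism then lies in Kreck's monoid $\ell_5(\Z[\Z/n])$, carried by a quadratic form over $\Z[\Z/n]$ whose associated bilinear form is essentially the difference of the two intersection forms modulo torsion. Here one invokes the $L$-theory of $\Z[\Z/n]$ for finite cyclic $n$ (Wall, Bak, and related computations): after stabilizing $W$ by connected sums with $S^2\times S^2$, the hypothesis that the two intersection forms are \emph{isometric} forces the obstruction into the subgroup generated by hyperbolic forms, so it vanishes in $\ell_5$. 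The resulting $s$-cobordism, together with the topological $s$-cobordism theorem for the good group $\Z/n$, yields a homeomorphism $X_0\cong X_1$.

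Finally, the realization statement --- every isometry $f$ of the intersection form on $H_2/\mathrm{Tors}$ is induced by a homeomorphism --- I would obtain by running the same machine relatively: starting from $X_0\times I$, alter the $B$-structure on the top boundary so that the identity on the underlying manifold carries the reindexing realizing $f$ (equivalently, produce a degree-one normal bordism from $X_0$ to ``$X_0$ with $H_2/\mathrm{Tors}$ reindexed by $f$''), and apply the surgery argument above to produce an $s$-cobordism whose associated homeomorphism induces $f$ by construction.

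The main obstacle is the middle step: controlling the surgery obstruction in $\ell_5(\Z[\Z/n])$ --- showing that isometry of the intersection forms \emph{modulo torsion}, rather than the full isometric refinement including the quadratic and linking data, is precisely enough to kill the obstruction after stabilization. This rests on the explicit $L$-group and Whitehead group computations for $\Z[\Z/n]$ and on Kreck's analysis of which elements of $\ell_5$ are hit by $2$-connected $B$-bordisms. Carrying the $w_2$-type bookkeeping --- especially the delicate case where $X$ is not spin but its universal cover is --- through the bordism computation and the quadratic refinement is the other place where care is required.
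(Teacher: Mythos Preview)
The paper does not prove this theorem: it is quoted from Hambleton--Kreck \cite{h-k} and used as a black box in the proof of Theorem~\ref{main-B}. There is therefore nothing in the paper to compare your argument against.

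That said, your sketch is a faithful outline of the actual Hambleton--Kreck proof. Their argument does proceed via Kreck's modified surgery: one determines the normal $1$-type (the three $w_2$-cases you list), computes the relevant $B$-bordism group using the known cohomology of $\Z/n$, and then shows that for two manifolds with the same invariants the $\ell_5(\Z[\Z/n])$ obstruction to making a $B$-bordism into an $s$-cobordism vanishes, invoking Freedman--Quinn to finish. You have correctly flagged the genuine work as lying in the $\ell_5$ step --- this is where Hambleton--Kreck appeal to cancellation results for quadratic forms over $\Z[\Z/n]$ (going back to Bak and Hambleton--Kreck's own earlier papers), and your sketch would need those as input rather than treating the vanishing as automatic from ``isometry implies hyperbolic stabilization''. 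The realization of isometries is also handled in \cite{h-k} along the lines you indicate. For the purposes of the present paper, however, none of this detail is needed: the theorem is simply cited.
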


A $0$-surgery along $\{pt\} \x S^1$ in $L(p,1) \x S^1$ yields a manifold with fundamental group ${\mathbb Z}_p$, which has the smallest homology among all other \linebreak $4$-manifolds of the same fundamental group, which we denote by $Y_p$. A bi-product of the above discussion gives rise to Theorem \ref{main-B}:

\begin{proof} \textbf{[Theorem \ref{main-B}]} \ 
In \cite{ABBKP}, a key ingredient in the constructions were the \textit{telescoping triples}. We recall the definition of a telescoping triple here for the convenience of reader: An ordered triple $(X, T_1, T_2)$ where $X$ is a symplectic $4$-manifold and $T_1$, $T_2$ are disjointly embedded Lagrangian tori is called a telescoping triple if
\begin{itemize}
\item[(i)] The tori $T_1$, $T_2$ span a $2$-dimensional subspace of $H_2(X; \R)$. \
\item[(ii)] $\pi_1(X) = \Z \oplus \Z$ and the inclusion induces an isomorphism  \linebreak $\pi_1(X \setminus (T_1 \cup T_2)) \to \pi_1(X)$, which in particular implies that the meridians of the $T_i$ are trivial in $\pi_1(X \setminus (T_1 \cup T_2))$. \
\item[(iii)] The image of the homomorphism induced by inclusion \linebreak $\pi_1(T) \to \pi_1(X)$ is a summand $\Z$ in $\pi_1(X)$. \
\item[(iv)] The homomorphism induced by inclusion $\pi_1(T_2) \to \pi_1(X)$ is an isomorphism. \
\end{itemize}

Each ABBKP manifold $X'$ is obtained using various telescoping triples. In particular, $X'$ can be viewed as obtained from a telescoping triple $(X, T_1, T_2)$ (say the `last' telescoping triple involved in the construction) after a $\pm 1$ Luttinger surger along $T_2$. The very properties of a telescoping triple implies that undoing the Luttinger surgery along the core-torus that descends from $T_2$ hands us back a symplectic $4$-manifold $Z$ with fundamental group $\Z$. Now if one performs a Luttinger surgery along $T_2$ in $Z$ with the same surgery curve but with surgery coefficient $1/p$ instead, from Seifert-Van Kampen calculation we get a symplectic $4$-manifold $Z_p$ with fundamental group $\Z_p$. (Note that the sign of the surgery does not effect the resulting fundamental group, so it is not relevant to our discussion here.) \par

We claim that the manifolds $Z$ and $Z_p$ constructed for each ABBKP manifold $X$ make up the families 
\[ \alpha {\mathbb C}{P}^{2} \# \beta \overline{{\mathbb C}{P}^{2}} \# ({S}^{1} \times {S}^{3}) \, \text{and} \]
\[ (\alpha -1) {\mathbb C}{P}^{2} \# (\beta -1) \overline{{\mathbb C}{P}^{2}} \# Y_{p} \, , \]
respectively. From Theorem \ref{ABBKPthm}, there is an $X$ with $a=\eu(X)$, $b=\sign(X)$ satisfying $a + b \equiv 0 \ (\bmod \ 8)$, and $b \leq -2$. (These constitute the `half' of the ABBKP manifolds, since we require $a + b \equiv 0 \ (\bmod \ 8)$ instead of $a + b \equiv 0 \ (\bmod \ 4)$.) Therefore, the Euler characteristic and signature of $Z$ and $Z_p$ are also equal to $a$ and $b$, respectively. Clearly, both are non-spin smooth $4$-manifolds, and satisfy $b_{2}(X) - |\sign(X)| \geq 6$. Now, $\pi_1(Z) = H_1(Z) = \Z$ and $\pi_1(Z_p)= H_1(Z_p) = \Z_p$ for $p$ odd, lands $Z$ in the same homeomorphism class of $(a+b)/2 {\mathbb C}{P}^{2} \# (a-b)/2 \overline{{\mathbb C}{P}^{2}} \# ({S}^{1} \times {S}^{3})$ by Theorem \ref{HTthm} and $Z_p$ in $ (a+b/2 -1) {\mathbb C}{P}^{2} \# (a-b/2 -1) \overline{{\mathbb C}{P}^{2}} \# Y_{p}$ by Theorem \ref{HKthm}, respectively. \par

To prove that the manifolds $Z$ and $Z_p$ are irreducible, we recall that they can equivalently be obtained from surgered products via two operations (1) and (2) discussed above. There are two key observations made in \cite{ABBKP} and \cite{A-P-inv} to conclude the minimality of ABBKP manifolds: First of all, the surgered products used in these constructions are minimal. After blow-ups minimality is lost in the pieces, however, the fiber sums that follow are performed along symplectic surfaces that intersect the new exceptional spheres in the way that Usher's theorem on minimality of symplectic fiber sums \cite{Usher} can be employed to conclude that the resulting symplectic $4$-manifold is minimal. The same observations hold true when one of the Luttinger surgeries goes undone, since the only difference now surfaces in one of the surgered products containing the corresponding Lagrangian torus being obtained from a product of Riemann surfaces with one less Luttinger surgery (and thus yielding a non-rational surface bundle over a non-rational surface, which has no $\pi_2$). Hence, both $Z$ and $Z_p$ are minimal symplectic $4$-manifolds with residually finite fundamental groups. By \cite{h-ko}, they are irreducible. \par

Lastly, our claim that manifolds $Z$ and $Z_p$ satisfy the BF-axioms, follow from Theorem \ref{undoingABBKP}.
\end{proof}

\begin{rmk}
In \cite{ABBKP} many more possible lattice points in the geography plane for $\sign \leq 4$ were realized by minimal symplectic $4$-manifolds, leaving out about 280 lattice points. Moreover, the small manifolds constructed by Akhmedov and Park in \cite{A-P-inv} leads to a slight enlargement of this region spanned by the minimal symplectic $4$-manifolds. These manifolds can also be used to enlarge our families obtained in Theorem \ref{main-B}---a similar discussion can be found in \cite{RT}. Nevertheless, we are content with the vast families we have got for the applications that will follow in the next chapter, and therefore will not discuss these slight extensions here.  
\end{rmk}

\section{Normalized Ricci flow, simplicial volume, Einstein metrics, and the FZZ conjecture}\label{sec-5}

The main purpose of this section to prove Theorems \ref{main-AcC} and \ref{main-AcCC}, which were stated in the Introduction. 

\subsection{Curvature bounds arising from the Seiberg-Witten equations}\label{sub-50}

First of all, let us recall
\begin{defn}[\cite{kro, leb-11, ishi-leb-2, leb-17}]\label{ishi-leb-2-key}
Let $X$ be a closed oriented smooth $4$-manifold with $b^+(X) \geq 2$. An element $\frak{a} \in H^2(X, {\mathbb Z})$/torsion $\subset H^2(X, {\mathbb R})$ is called monopole class of $X$ if there exists a spin${}^c$ structure $\Gamma_{X}$ with 
\begin{eqnarray*}
{c}^{\mathbb R}_{1}({\cal L}_{\Gamma_{X}}) = \frak{a} 
\end{eqnarray*} 
which has the property that the corresponding Seiberg-Witten monopole equations have a solution for every Riemannian metric on $X$. Here ${c}^{\mathbb R}_{1}({\cal L}_{\Gamma_{X}})$ is the image of the first Chern class ${c}_{1}({\cal L}_{\Gamma_{X}})$ of the complex line bundle ${\cal L}_{\Gamma_{X}}$ in $H^2(X, {\mathbb R})$. We shall denote the set of all monopole classes on $X$ by ${\frak C}(X)$.
\end{defn} 

One of the crucial properties of the set ${\frak C}(X)$ is the finiteness of  ${\frak C}(X)$ as follows. 
\begin{prop}[\cite{leb-17, ishi-leb-2}]\label{mono-bounded}
Let $X$ be a closed oriented smooth $4$-manifold with $b^+(X) \geq 2$. Then ${\frak C}(X)$ is a finite set. 
\end{prop}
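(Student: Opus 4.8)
The plan is to show that the set of monopole classes $\mathfrak{C}(X)$ is contained in a bounded region of $H^2(X;\mathbb{R})$ and is discrete, hence finite. The discreteness is immediate: $\mathfrak{C}(X)$ lives in the lattice $H^2(X;\mathbb{Z})/\text{torsion}$, a discrete subset of $H^2(X;\mathbb{R})$. So the entire content is the \emph{a priori} bound on the size of a monopole class, which is the classical curvature estimate coming from the Weitzenb\"ock formula for the Seiberg-Witten equations.

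First I would fix an arbitrary Riemannian metric $g$ on $X$ and recall that if $\mathfrak{a} = c_1^{\mathbb{R}}(\mathcal{L}_{\Gamma_X})$ is a monopole class, then for \emph{this} particular $g$ the Seiberg-Witten equations $\mathcal{D}_A\phi=0$, $F_A^+ = iq(\phi)$ admit a solution $(A,\phi)$. Applying the Weitzenb\"ock formula to $\mathcal{D}_A^*\mathcal{D}_A\phi = 0$ and integrating over $X$ gives the standard pointwise-and-then-integral estimate forcing $|\phi|^2 \leq \max(0, -\min_X s_g)$ (the Kato/maximum-principle argument), whence $\|F_A^+\|_{L^2}$ is controlled by a constant depending only on $(X,g)$ — concretely $\int_X |F_A^+|^2 \, d\mu_g \leq \tfrac{1}{32}\int_X s_g^2\, d\mu_g$. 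Combined with the topological identity $\int_X |F_A^+|^2 - \int_X |F_A^-|^2 = 4\pi^2 (\mathfrak{a})^2$ (the signature-type formula relating $\int F_A\wedge F_A$ to $\mathfrak{a}^2$, here using that $\mathfrak{a}$ is the real first Chern class), and the elementary inequality $\int_X |F_A^-|^2 \leq \int_X|F_A^+|^2 - 4\pi^2(\mathfrak a)^2 \le \ldots$ — more carefully, one bounds $\|F_A\|_{L^2}^2 = \|F_A^+\|^2 + \|F_A^-\|^2$ and then uses that the harmonic representative of $\mathfrak a$ minimizes $L^2$ norm in its cohomology class — one obtains a bound $\|\mathfrak{a}\|_{L^2(g)}^2 \leq C(X,g)$ where the left side is computed via the harmonic representative. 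Since the $L^2$ norm with respect to a fixed metric defines a norm on the finite-dimensional space $H^2(X;\mathbb{R})$, this confines $\mathfrak{C}(X)$ to a bounded (indeed compact) subset of $H^2(X;\mathbb{R})$.

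Putting the two observations together: $\mathfrak{C}(X)$ is a subset of the lattice $H^2(X;\mathbb{Z})/\text{torsion}$ that is contained in a bounded region, and a bounded subset of a lattice is finite. This completes the argument.

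The main obstacle — really the only subtle point — is getting the curvature bound \emph{uniform over all monopole classes simultaneously}, i.e. making sure the constant $C$ depends only on $(X,g)$ and not on the spin$^c$ structure $\Gamma_X$ or the particular solution. This is where one must be careful: the bound $|\phi|^2 \leq -\min_X s_g$ is genuinely $\Gamma_X$-independent, and the signature formula is purely topological, so the estimate $(\mathfrak{a})^2$-type bound and the $L^2$-norm bound on the harmonic representative both come out with a constant depending only on $\|s_g\|_{L^2}$ and the fixed metric $g$ — but verifying that all the intermediate inequalities (especially controlling $\|F_A^-\|_{L^2}$, where there is no pointwise bound, only the cohomological/harmonic-projection argument) go through uniformly is the crux. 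I would cite \cite{leb-17, ishi-leb-2} for the precise form of this estimate rather than reproduce it, since the statement is explicitly attributed to those sources.
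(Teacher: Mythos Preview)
The paper does not give its own proof of this proposition; it is simply quoted from \cite{leb-17, ishi-leb-2}. Your overall strategy --- discreteness from the lattice, boundedness from the Weitzenb\"ock estimate --- is the right one, and the bound $\int_X|F_A^+|^2 \leq \tfrac{1}{32}\int_X s_g^2$ is indeed the key analytic input.

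There is, however, a genuine gap in your boundedness step. Fixing a single metric $g$, the Weitzenb\"ock formula bounds only $\|F_A^+\|_{L^2}$, and hence only the self-dual part $\alpha^+$ of the harmonic representative $\alpha$ of $\mathfrak{a}$ (self-dual harmonic forms are $L^2$-orthogonal to $(d\beta)^+$, giving $\|\alpha^+\| \leq \tfrac{1}{2\pi}\|F_A^+\|$). It does \emph{not} bound $\|\alpha^-\|$ or $\|F_A^-\|$: your identity $\|F_A^-\|^2 = \|F_A^+\|^2 - 4\pi^2\mathfrak{a}^2$ is of no help because $\mathfrak{a}^2$ can be arbitrarily negative, and the harmonic-minimizing inequality $\|\alpha\| \leq \tfrac{1}{2\pi}\|F_A\|$ requires a bound on the \emph{full} $\|F_A\|$, which you never obtain. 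A single metric therefore confines $\mathfrak{C}(X)$ only to the slab $\{\,x \in H^2(X;\mathbb{R}) : \|x^+_g\| \leq C(g)\,\}$, which is unbounded in the $\mathcal{H}^-_g$-direction. The argument in \cite{leb-17, ishi-leb-2} closes this by using the very definition of a monopole class --- solutions exist for \emph{every} metric: one selects finitely many metrics $g_1,\ldots,g_N$ whose anti-self-dual harmonic subspaces $\mathcal{H}^-_{g_i}$ have trivial common intersection (possible because the period map $g \mapsto \mathcal{H}^+_g$ has open image in the Grassmannian of maximal positive-definite subspaces of $H^2(X;\mathbb{R})$); the resulting injective linear map $\mathfrak{a} \mapsto (\alpha^+_{g_1},\ldots,\alpha^+_{g_N})$ then bounds $\|\mathfrak{a}\|$ in terms of the individual estimates $\|\alpha^+_{g_i}\| \leq C(g_i)$.
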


It is known \cite{ishi-leb-2} that the non-triviality of the stable cohomotopy Seiberg-Witten invariants implies the existence of monopole classes. It is also known that the existence of non-zero monopole classes on a closed $4$-manifold $X$ implies that $X$ can not admit any Riemannian metric of positive scalar curvature. For instance, see \cite{ishi-leb-2}. Hence, the existence of monopole classes tells us a differential geometric information of $X$. Moreover, LeBrun \cite{leb-11, leb-17} proved the existence of monopole classes implies several interesting curvature bounds, which have many powerful differential geometric applications.  By combining Theorem \ref{new-BF-non-vanishing} with the curvature bounds of LeBrun \cite{leb-11, leb-17}, we are able to obtain the following curvature bound. We shall use the bounds (\ref{weyl-leb-sca-1}) and (\ref{weyl-leb-sca-2}) to prove Theorems \ref{main-AcC} and \ref{main-AcCC}. 

\begin{thm}[Theorem 53 in \cite{ishi-1}] \label{mono-key-bounds}
For $m =1,2,3$, let $X_m$ be a BF-admissible $4$-manifold in the sense of Definition \ref{BF-adm-def} and set as $c^{2}_{1}(X_{m}) = 2 \eu(X_{m}) + 3 \sign(X_{m})$. Suppose that $N$ is a closed oriented smooth $4$-manifold with $b^{+}(N)=0$. Consider a connected sum 
$$
M:=\Big(\#^{n}_{m=1}{X}_{m} \Big) \# N,
$$ 
where $n=2,3$. Then any Riemannian metric $g$ on $M$ satisfies the following curvature estimates:
\begin{eqnarray}\label{weyl-leb-sca-1}
{\int}_{M}{{s}^2_{g}}d{\mu}_{g} \geq {32}{\pi}^{2}\sum^{n}_{m=1}{c}^2_{1}(X_{m}), 
\end{eqnarray} 
\begin{eqnarray}\label{weyl-leb-sca-2}
{\int}_{M}\Big({s}_{g}-\sqrt{6}|W^{+}_{g}|\Big)^2 d{\mu}_{g} \geq 72{\pi}^{2}\sum^{m}_{n=1}{c}^2_{1}(X_{m}),  
\end{eqnarray}
where $s_{g}$ and $W^{+}_{g}$ denote respectively the scalar curvature and self-dual Weyl curvature of $g$. 
\end{thm}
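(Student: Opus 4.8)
The plan is to reduce everything to the combination of two known inputs: Theorem \ref{new-BF-non-vanishing}, which furnishes the non-triviality of the stable cohomotopy Seiberg--Witten invariant for connected sums of BF-admissible pieces, and LeBrun's curvature estimates for $4$-manifolds carrying a monopole class. First I would invoke the fact recorded in the text (following \cite{ishi-leb-2}) that non-vanishing of $BF_{*}$ forces the existence of a monopole class, i.e.\ of a spin${}^{c}$ structure whose Seiberg--Witten equations admit a solution for \emph{every} metric. By Theorem \ref{new-BF-non-vanishing}, $\#^{n}_{m=1}X_{m}$ has non-trivial stable cohomotopy Seiberg--Witten invariant when $n=2,3$ and each $X_{m}$ is BF-admissible. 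The next step is a gluing/stability statement: attaching $N$ with $b^{+}(N)=0$ (so $N$ is a connected sum of copies of $\CPb$ and possibly homology $S^{1}\times S^{3}$ summands, up to the relevant vanishing) does not destroy the monopole class on $M=\bigl(\#^{n}_{m=1}X_{m}\bigr)\#N$; this is the standard blow-up/connected-sum behavior of monopole classes, which lets the basic class of the BF-summands survive in $H^{2}(M)$ with the same square.

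Once a monopole class $\mathfrak{a}$ on $M$ is in hand, the scalar curvature estimate (\ref{weyl-leb-sca-1}) and the mixed scalar-Weyl estimate (\ref{weyl-leb-sca-2}) follow verbatim from LeBrun's inequalities in \cite{leb-11,leb-17}, applied to the monopole class coming from the symplectic canonical classes of the $X_{m}$. The key bookkeeping point is that, by the first BF-admissibility condition, each $X_{m}$ has a spin${}^{c}$ structure $\Gamma_{X_{m}}$ with $c_{1}^{2}(\mathcal{L}_{\Gamma_{X_{m}}})=2\eu(X_{m})+3\sign(X_{m})=:c_{1}^{2}(X_{m})\ge 0$ and $SW_{X_{m}}(\Gamma_{X_{m}})\equiv 1\pmod 2$; the non-vanishing theorem ensures that the corresponding class on the connected sum is monopole. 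Summing the local contributions $c_{1}^{2}(X_{m})$ over $m$ and feeding this into LeBrun's refined estimates --- which in the relevant range take the form $\int_{M}s_{g}^{2}\,d\mu_{g}\ge 32\pi^{2}(\text{square of the monopole class})$ and $\int_{M}(s_{g}-\sqrt{6}|W^{+}_{g}|)^{2}\,d\mu_{g}\ge 72\pi^{2}(\text{square of the monopole class})$ --- yields exactly (\ref{weyl-leb-sca-1}) and (\ref{weyl-leb-sca-2}). I would remark that because (\ref{weyl-leb-sca-2}) actually requires working with the family/Bauer--Furuta refinement rather than a single basic class (one needs curvature bounds valid simultaneously for a whole segment of monopole classes, which is precisely what the stable cohomotopy invariant delivers), the role of Theorem \ref{new-BF-non-vanishing} here is essential and cannot be replaced by ordinary Seiberg--Witten theory.

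The main obstacle, and the only place where real care is needed, is verifying that the monopole class persists under the connected sum with $N$ and, more subtly, that the \emph{full strength} of LeBrun's estimate (\ref{weyl-leb-sca-2}) --- which in his work is obtained by integrating curvature bounds over a line segment joining $\pm$ monopole classes --- remains available in this families setting. This is exactly the content that the stable cohomotopy Seiberg--Witten invariant (as opposed to $SW_{*}$) guarantees: LeBrun and Ishida \cite{ishi-leb-2,leb-17} show that non-triviality of $BF_{*}$ produces not a single monopole class but a convex region of them, and the Weyl-curvature refinement is extracted from this region. So the proof is essentially an assembly: (i) apply Theorem \ref{new-BF-non-vanishing} to get $BF_{*}\ne 0$ for $\#^{n}_{m=1}X_{m}$; (ii) note $BF_{*}\ne 0$ is preserved under connected sum with $b^{+}=0$ pieces and hence holds for $M$; (iii) quote \cite{ishi-leb-2} to pass from $BF_{*}\ne 0$ to the monopole region; (iv) quote \cite{leb-11,leb-17} for the resulting integral curvature inequalities, computing the right-hand constants from $\sum_{m}c_{1}^{2}(X_{m})$. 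Since this is precisely Theorem 53 of \cite{ishi-1} restated for our building blocks, no new analytic input beyond these citations is required.
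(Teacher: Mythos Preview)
The paper does not actually prove this theorem; it is quoted verbatim as Theorem~53 of \cite{ishi-1}, with only the one-line remark that it follows ``by combining Theorem~\ref{new-BF-non-vanishing} with the curvature bounds of LeBrun \cite{leb-11, leb-17}.'' Your outline is fully consistent with that description and with the argument in \cite{ishi-1}: non-vanishing of $BF_{*}$ for $\#_{m}X_{m}$ via Theorem~\ref{new-BF-non-vanishing}, stability under connected sum with a $b^{+}=0$ piece, passage to monopole classes as in \cite{ishi-leb-2}, and then LeBrun's scalar and scalar--Weyl estimates from \cite{leb-11, leb-17}. So your proposal matches the intended approach and there is nothing further to compare.
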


\subsection{Asymptotic behavior of the Ricci curvature}\label{sub-51}

To prove Theorems \ref{main-AcC} and \ref{main-AcCC}, we need to study asymptotic behavior of the Ricci curvature of the solution of the normalized Ricci flow under connected sum of $4$-manifolds. The main result of this subsection is Theorem \ref{key-prop-3} stated below. We shall use the bound (\ref{weyl-leb-sca-1}) above at the crucial part of the proof.\par
Inspired by works of Cao \cite{cao-X} and Li \cite{li}, one parameter family $\bar{\lambda}_{k}$ of smooth invariants, where  $k \in {\mathbb R}$, was introduced in \cite{ishi-1}. It is called $\bar{\lambda}_{k}$ invariant. $\bar{\lambda}_{k}$ invariant includes Perelman's $\bar{\lambda}$ invariant as a special case. Indeed, $\bar{\lambda}_{1}=\bar{\lambda}$ holds. Let us recall its definition. \par 
We shall start with recalling the following definition which is essentially due to Li \cite{li}. In fact, the following definition in the case where $k \geq 1$ is nothing but Definition 41 in \cite{li}. We also notice that the following definition was also appeared as the equality (8) in \cite {o-s-w}: 
\begin{defn}[\cite{li, o-s-w}]
Let $X$ be a closed oriented Riemannian manifold with dimension $\geq 3$. Then, we define the following variant ${\mathcal F}_{k} : {\cal R}_{X} \times C^{\infty}(X) \rightarrow {\mathbb R}$ of the Perelman's $\mathcal F$-functional: 
\begin{eqnarray}\label{li-pere}
{\mathcal F}_{k}(g, f):={\int}_{X}\Big(k{s}_{g}+|\nabla f|^2 \Big){e}^{-f} d{\mu}_{g},
\end{eqnarray}
where $k$ is a real number $k \in {\mathbb R}$. We shall call this ${\mathcal F}_{k}$-functional.
\end{defn}
Notice that ${\mathcal F}_{1}$-functional is nothing but Perelman's ${\mathcal F}$-functional. Li \cite{li} showed that all functionals ${\mathcal F}_{k}$ with $k \geq 1$ have the monotonicity properties under a certain coupled system of Ricci flow. \par
As was already mentioned in \cite{li, lott} essentially, for a given metric $g$ and $k \in {\mathbb R}$, there exists a unique minimizer of the ${\cal F}_{k}$-functional under the constraint ${\int}_{X}{e}^{-f} d\mu_{g} =1$. In fact, by using a direct method of the elliptic regularity theory, one can see that the following infimum is always attained: 
\begin{eqnarray*}
{{\lambda}}(g)_{k}:=\inf_{f} \ \{ {\cal F}_{k}(g, f) \ | \ {\int}_{X}{e}^{-f} d\mu_{g} =1 \}. 
\end{eqnarray*}
Notice that $\lambda(g)_k$ is nothing but the least eigenvalue of the elliptic operator $4 \Delta_g+ks_g$. It is then natural to introduce the following: 
\begin{defn}
For any real number $k \in {\mathbb R}$, the $\bar{\lambda}_{k}$ invariant of $X$ is defined to be 
\begin{eqnarray*}\label{p-inv}
\bar{\lambda}_{k}(X)= \sup_{g \in {\cal R}_{X}}\lambda(g)_{k} (vol_g)^{2/n}.  
\end{eqnarray*}
\end{defn}
This is a diffeomorphism invariants of a smooth manifold. And it is clear that $\bar{\lambda}_{1}=\bar{\lambda}$ holds.  Then, the following result holds:
\begin{lem}\label{mini-scal-lem}
Let $X$ be a closed oriented Riemannian manifold of dimension $n \geq 3$ and assume that there is a real number $k$ such that the $\overline{{\lambda}}_{k}$ invariant of $X$ is negative, i.e., $\overline{{\lambda}}_{k}(X)<0$. If there is a solution $\{g(t)\}$, $t \in [0, T)$, to the normalized Ricci flow, then 
\begin{eqnarray*}
\hat{s}_{g(t)}:=\min_{x \in X}{s}_{g(t)}(x) \leq \frac{\overline{{\lambda}}_{k}(X)}{k(vol_{g(0)})^{2/n}} < 0, 
\end{eqnarray*}
where we define as $\hat{s}_{g} := \min_{x \in X}{s}_{g}(x)$ for a given Riemannian metric $g$.
\end{lem}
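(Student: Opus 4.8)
The plan is to exploit the standard fact that under the normalized Ricci flow the \emph{minimum} of the scalar curvature is monotone non-decreasing and to combine this with the variational characterization of $\bar\lambda_k$ via the $\mathcal F_k$-functional. First I would recall the evolution equation for the scalar curvature along the normalized Ricci flow in dimension $n$: writing $\bar s_{g(t)}$ for the average scalar curvature, one has
\begin{eqnarray*}
\frac{\partial}{\partial t} s_{g(t)} = 2\Delta_{g(t)} s_{g(t)} + 2\left| \mathrm{Ric}_{g(t)} \right|^2 - \frac{2}{n} \bar s_{g(t)}\, s_{g(t)}.
\end{eqnarray*}
Using $\left| \mathrm{Ric} \right|^2 \geq \tfrac{1}{n} s^2$ and applying the maximum principle at a point where $s_{g(t)}$ attains its spatial minimum $\hat s_{g(t)}$ (so the Laplacian term is non-negative there), one obtains the differential inequality $\tfrac{d}{dt}\hat s_{g(t)} \geq \tfrac{2}{n}\hat s_{g(t)}\big(\hat s_{g(t)} - \bar s_{g(t)}\big) \geq 0$ whenever $\hat s_{g(t)} \leq 0$, since then $\hat s_{g(t)} - \bar s_{g(t)} \leq 0$. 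Hence $\hat s_{g(t)}$ is non-decreasing in $t$ as long as it stays non-positive, and in particular $\hat s_{g(t)} \geq \hat s_{g(0)}$; more usefully, the sign of $\hat s_{g(t)}$ cannot go from non-positive to positive, so if it is ever non-positive it stays so, and it is bounded below by its initial value.

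Next I would connect this to $\bar\lambda_k$. Since $\lambda(g)_k$ is the least eigenvalue of $4\Delta_g + k s_g$, testing against the constant eigenfunction bound (or directly: for any $u$ with $\int_X u^2\,d\mu_g = 1$, $\int_X\big[k s_g u^2 + 4k^{-1}\cdot k|\nabla u|^2\big] \geq k\hat s_g \int_X u^2$ when $k>0$) gives $\lambda(g)_k \geq k\,\hat s_g$ for $k > 0$; equivalently, $\hat s_g \leq \lambda(g)_k / k$. Applying this to $g = g(t)$ and using that the volume is constant along the normalized flow, $vol_{g(t)} = vol_{g(0)}$, we get
\begin{eqnarray*}
\hat s_{g(t)}\,(vol_{g(0)})^{2/n} \leq \frac{\lambda(g(t))_k\,(vol_{g(t)})^{2/n}}{k} \leq \frac{\bar\lambda_k(X)}{k}.
\end{eqnarray*}
Since $\bar\lambda_k(X) < 0$ by hypothesis and $k > 0$, the right-hand side is negative; in particular $\hat s_{g(t)} < 0$ for all $t$, which retroactively justifies the use of the differential inequality above (the hypothesis "$\hat s_{g(t)} \leq 0$" is automatically satisfied), and rearranging gives precisely $\hat s_{g(t)} \leq \bar\lambda_k(X) / \big(k\,(vol_{g(0)})^{2/n}\big) < 0$, the claimed bound. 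One subtlety worth addressing: the statement as written does not restrict to $k > 0$, but $\bar\lambda_k(X) < 0$ with $k \leq 0$ would make the displayed inequality's right-hand side non-negative and vacuous, or the eigenvalue comparison would reverse; I would simply note that the interesting case (and the one used in the sequel, where $k \geq 1$) is $k > 0$, or restrict the statement accordingly.

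The main obstacle I anticipate is handling the analytic regularity issues cleanly rather than any conceptual difficulty: namely, justifying the maximum-principle argument for $\hat s_{g(t)}$ when $s_{g(t)}$ is only Lipschitz in $t$ (one uses Hamilton's lemma on the evolution of spatial infima of solutions to parabolic inequalities, via forward difference quotients), and making sure the minimizer $f$ of $\mathcal F_k(g(t),\cdot)$ and the eigenvalue comparison are applied with the correct normalization $\int_X e^{-f}\,d\mu_g = 1$ versus $\int_X u^2\,d\mu_g = 1$ (the substitution $u^2 = e^{-f}$ relates them, and one checks $4|\nabla u|^2 = |\nabla f|^2 e^{-f}$, so $\lambda(g)_1$ genuinely equals the least eigenvalue of $4\Delta_g + s_g$, and the $k$-version follows by scaling the scalar-curvature term). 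Both are standard, so the proof should be short.
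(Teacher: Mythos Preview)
Your core argument is correct and coincides with the paper's: the Rayleigh-quotient bound $\lambda(g)_k \geq k\hat s_g$ (for $k>0$), the definition $\bar\lambda_k(X)\geq \lambda(g(t))_k (vol_{g(t)})^{2/n}$, and volume preservation under the normalized flow together give the claimed inequality for every $t$. Your opening paragraph on the evolution of $s_{g(t)}$ and the monotonicity of $\hat s_{g(t)}$ via the maximum principle is entirely superfluous: the eigenvalue comparison applies to \emph{any} metric $g$, so you already get $\hat s_{g(t)}\leq \bar\lambda_k(X)/\big(k(vol_{g(0)})^{2/n}\big)$ directly for each $t$, with no need to propagate anything along the flow or to ``retroactively justify'' a differential inequality you never used. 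Drop that material and the proof is exactly the paper's.
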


\begin{proof}
Let $\{g(t)\}$ be any solution to the normalized Ricci flow on $X$. Notice that $\lambda_{g(t)}$ can be expressed in terms of Raleigh quotients as 
\begin{eqnarray*}
\lambda_{g(t)} &=& \inf_{u} \frac{\int_X \left[ k s_{g(t)}u^2 + 4 |\nabla u|^2 \right] d{\mu}_{g(t)}}{\int_X u^2d{\mu}_{g(t)}}, 
\end{eqnarray*}
where the infimum is  taken over all smooth real-valued functions $u$ on $X$. Therefore we have
\begin{eqnarray*}
\lambda_{g(t)} &=& \inf_{u} \frac{\int_X \left[ k s_{g(t)}u^2 + 4 |\nabla u|^2 \right] d{\mu}_{g(t)}}{\int_X u^2d{\mu}_{g(t)}} \\
&\geq& \inf_{u} \frac{\int_X \left[ k \hat{s}_{g(t)} u^2 + 4 |\nabla u|^2 \right] d{\mu}_{g(t)}}{\int_X u^2d{\mu}_{g(t)}} \\
&\geq& k \hat{s}_{g(t)} \Big( \inf_{u} \frac{\int_X  u^2  d{\mu}_{g(t)}}{\int_X u^2d{\mu}_{g(t)}} \Big) = k \hat{s}_{g(t)}. 
\end{eqnarray*}
Hence $\lambda_{g(t)} \geq k \hat{s}_{g(t)}$ holds. On the other hand, by the very definition of $\overline{{\lambda}}_{k}$ invariant, we have $\overline{{\lambda}}_{k}(X) \geq \lambda_{g(t)}(vol_{g(t)})^{2/n}$. We therefore get the following:
\begin{eqnarray*}
\overline{{\lambda}}_{k}(X) \geq k \hat{s}_{g(t)}(vol_{g(t)})^{2/n}
\end{eqnarray*}
Since the normalized Ricci flow preserves the volume of the solution, we have $vol_{g(t)}= vol_{g(0)}$. Hence, we obtain
\begin{eqnarray*}
\overline{{\lambda}}_{k}(X) \geq k \hat{s}_{g(t)}(vol_{g(0)})^{2/n}.
\end{eqnarray*}
Equivalently, we obtain the desired bound:
\begin{eqnarray*}
\hat{s}_{g(t)} \leq \frac{\overline{{\lambda}}_{k}(X)}{k(vol_{g(0)})^{2/n}} < 0.
\end{eqnarray*}
\end{proof}
On the other hand, as one of interesting differential geometric invariants, there exists a natural diffeomorphism invariant arising from a variational problem for the total scalar curvature of Riemannian metrics on a closed oriented Riemannian manifold $X$ of dimension $n\geq 3$. As was conjectured by Yamabe, and later proved by Trudinger, Aubin, and Schoen, every conformal class on a smooth compact manifold contains a Riemannian metric of constant scalar curvature. Hence, for each conformal class $[g]=\{ vg ~|~v: X\to {\Bbb R}^+\}$, we are able to consider an associated number $Y_{[g]}$, which is so called Yamabe constant of the conformal class $[g]$ and defined by 
\begin{eqnarray*}
Y_{[g]} = \inf_{h \in [g]}  \frac{\int_X 
s_{{h}}~d\mu_{{h}}}{\left(\int_X 
d\mu_{{h}}\right)^{\frac{n-2}{n}}}, 
\end{eqnarray*}
where $s_{h}$ is the scalar curvature of the metric $h$ and $d\mu_{{h}}$ is the volume form with respect to the metric $h$. The Trudinger-Aubin-Schoen theorem tells us that this number is actually realized as the constant scalar curvature of some unit volume metric in the conformal class $[g]$. Then, Kobayashi \cite{kob} and Schoen \cite{sch-2} independently introduced the following interesting invariant of $X$:
\begin{eqnarray*}
{\mathcal Y}(X) = \sup_{\mathcal{C}}Y_{[g]}, 
\end{eqnarray*}
where $\mathcal{C}$ is the set of all conformal classes on $X$. This is now commonly known as the Yamabe invariant of $X$. It is known that ${\mathcal Y}(X) \leq 0$ if and only if $X$ does not admit a metric of positive scalar curvature. \par
$\bar{\lambda}_{k}$ invariant is closely related to the Yamabe invariant. Indeed, the following result holds.  
\begin{prop}[\cite{ishi-1}]\label{lambda-k-inv}
Suppose that $X$ is a smooth closed $n$-manifold, $n \geq 3$. Then the following holds: 
$$\bar{\lambda}_{k}(X) = \begin{cases}
     k{\mathcal Y}(X) & \text{ if  } {\mathcal Y}(X) \leq 0 \text{ and } k \geq \frac{n-2}{n-1}, \\
     +\infty  & \text{ if  } {\mathcal Y}(X) >  0 \text{ and } k > 0.
\end{cases}
$$
\end{prop}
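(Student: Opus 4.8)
The plan is to split into the two regimes of the proposition and, in the first, to prove the matching lower and upper bounds separately. Throughout write $n=\dim X\ge 3$, let $\lambda(g)_k$ denote the least eigenvalue of $4\Delta_g+ks_g$, and recall that $\bar{\lambda}_k(X)$ is the supremum over all metrics of the scale-invariant quantity $\lambda(g)_k(\mathrm{vol}_g)^{2/n}$. First, suppose $\mathcal{Y}(X)\le 0$ and $k\ge\frac{n-2}{n-1}$. For the bound $\bar{\lambda}_k(X)\ge k\mathcal{Y}(X)$, fix a conformal class $\mathcal{C}=[g]$; by the Trudinger--Aubin--Schoen resolution of the Yamabe problem it contains a metric $h$ of constant scalar curvature $\sigma$, and then $Y_{\mathcal{C}}=\sigma(\mathrm{vol}_h)^{2/n}\le 0$, so $\sigma\le 0$. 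Since $\sigma$ is constant, the spectrum of $4\Delta_h+k\sigma$ is $\{4\mu_j+k\sigma\}_{j\ge 0}$ with $0=\mu_0<\mu_1\le\cdots$ the eigenvalues of $\Delta_h$, so the least eigenvalue is $\lambda(h)_k=k\sigma$, attained on constants. Hence $\lambda(h)_k(\mathrm{vol}_h)^{2/n}=kY_{\mathcal{C}}$, giving $\bar{\lambda}_k(X)\ge kY_{\mathcal{C}}$; taking the supremum over $\mathcal{C}$ (and using $k>0$) yields $\bar{\lambda}_k(X)\ge k\mathcal{Y}(X)$. This inequality uses only $k>0$.

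For the reverse inequality $\bar{\lambda}_k(X)\le k\mathcal{Y}(X)$, I would start from an arbitrary metric $g$, with conformal class $[g]$ satisfying $Y_{[g]}\le 0$. The hypothesis $k\ge\frac{n-2}{n-1}$ is exactly the statement $4\le\frac{4k(n-1)}{n-2}$, so $\int_X(ks_g\phi^2+4|\nabla\phi|^2)\,d\mu_g\le k\int_X\big(s_g\phi^2+\tfrac{4(n-1)}{n-2}|\nabla\phi|^2\big)\,d\mu_g$ for every test function $\phi$; dividing by $\int_X\phi^2\,d\mu_g$ and taking the infimum over $\phi$ gives
\[
\lambda(g)_k \;\le\; k\,\nu(g),
\]
where $\nu(g)$ is the least eigenvalue of the conformal Laplacian $\tfrac{4(n-1)}{n-2}\Delta_g+s_g$. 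It then remains to show $\nu(g)(\mathrm{vol}_g)^{2/n}\le Y_{[g]}$: when $Y_{[g]}<0$ one picks $\phi$ with $N(\phi):=\int_X\big(\tfrac{4(n-1)}{n-2}|\nabla\phi|^2+s_g\phi^2\big)\,d\mu_g<0$ and combines Hölder's inequality $\int_X\phi^2\,d\mu_g\le\big(\int_X\phi^{2n/(n-2)}\,d\mu_g\big)^{(n-2)/n}(\mathrm{vol}_g)^{2/n}$ with $N(\phi)<0$ to get $N(\phi)/\!\int_X\phi^2\,d\mu_g\le(\mathrm{vol}_g)^{-2/n}\,N(\phi)/\|\phi\|_{2n/(n-2)}^2$; taking the infimum over such $\phi$ and recognising the Yamabe functional of $[g]$ yields the claim, while the boundary case $Y_{[g]}=0$ follows from the classical fact that the sign of $Y_{[g]}$ equals that of $\nu(g)$. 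Chaining these, $\lambda(g)_k(\mathrm{vol}_g)^{2/n}\le k\nu(g)(\mathrm{vol}_g)^{2/n}\le kY_{[g]}\le k\mathcal{Y}(X)$ for all $g$, and the supremum gives $\bar{\lambda}_k(X)\le k\mathcal{Y}(X)$.

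For the case $\mathcal{Y}(X)>0$ and $k>0$, recall (as noted just before Proposition~\ref{lambda-k-inv}) that $X$ then admits a metric of positive scalar curvature, hence a metric $g_0$ with $s_{g_0}\ge\delta>0$ everywhere for some $\delta$. I would then invoke the Gromov--Lawson ``torpedo'' construction: modifying the metric inside a small metric ball so as to grow a long thin finger, a cylindrical tube $[0,L]\times S^{n-1}$ whose product metric has constant scalar curvature $(n-1)(n-2)/\varepsilon^2>0$, capped off by a small hemisphere, produces --- for $\varepsilon$ small and fixed and $L\to\infty$ --- a family of metrics $g_L$ on $X$ (the diffeomorphism type is unchanged, this being an interior modification of the metric on a ball) with a uniform positive lower bound $s_{g_L}\ge\delta'>0$ and $\mathrm{vol}_{g_L}\to\infty$. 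Since $s_{g_L}\ge\delta'$ forces $\lambda(g_L)_k\ge k\delta'$ directly from the Rayleigh quotient, one gets $\lambda(g_L)_k(\mathrm{vol}_{g_L})^{2/n}\ge k\delta'(\mathrm{vol}_{g_L})^{2/n}\to\infty$, so $\bar{\lambda}_k(X)=+\infty$.

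The main obstacle is the upper bound in the case $\mathcal{Y}(X)\le 0$: one has to compare $4\Delta_g+ks_g$ with the conformal Laplacian in precisely the direction that $k\ge\frac{n-2}{n-1}$ permits, and then run the Hölder estimate while tracking signs carefully, since here the relevant infima are non-positive and several of the inequalities reverse. The $+\infty$ case is conceptually straightforward but relies on quoting the surgery/neck stability of positive scalar curvature (in the spirit of \cite{G-L}) to manufacture positive-scalar-curvature metrics of arbitrarily large volume with a uniform positive lower bound on the scalar curvature.
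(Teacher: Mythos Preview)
The paper does not prove this proposition: it is quoted from \cite{ishi-1} (the $k=1$ case being \cite{A-ishi-leb-3}) and stated without argument, so there is nothing in the present paper to compare your proposal against.

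That said, your argument is correct and is in fact the standard one used in those references. In the regime $\mathcal{Y}(X)\le 0$, the lower bound via Yamabe minimizers and the upper bound via the comparison $4\le \tfrac{4k(n-1)}{n-2}$ (this is exactly where $k\ge\tfrac{n-2}{n-1}$ enters) followed by the H\"older estimate between the $L^2$ and $L^{2n/(n-2)}$ Rayleigh quotients are precisely the two ingredients; your sign bookkeeping in the H\"older step is right. For $\mathcal{Y}(X)>0$, the neck/torpedo construction producing positive-scalar-curvature metrics of unbounded volume with a uniform positive lower bound on $s$ is also the mechanism used in \cite{A-ishi-leb-3}, so there is no substantive difference of method.
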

We can prove the following bound by using Theorem \ref{new-BF-non-vanishing} and Proposition \ref{lambda-k-inv}. 
\begin{thm}\label{main-CCC}
For $m =1,2,3$, let $X_m$ be a BF-admissible $4$-manifold in the sense of Definition \ref{BF-adm-def}. Let $N$ be a closed oriented smooth $4$-manifold with $b^{+}(N)=0$. And assume that $\sum^n_{m=1}c^2_{1}(X_{m}) > 0$, where $n=2,3$ and $c^2_{1}(X_{m}) = 2\eu(X_{m}) + 3 \sign(X_{m})$. Then, for $n=2,3$ and any real number $k \geq \frac{2}{3}$, $\bar{\lambda}_{k}$ invariant of a connected sum $M:=(\#^{n}_{m=1}{X}_{m}) \# N$ satisfies the following bound: 
\begin{eqnarray*}
\bar{\lambda}_{k}(M) \leq {-4k{\pi}}\sqrt{2\sum^n_{m=1}c^2_{1}(X_{m})} < 0. 
\end{eqnarray*}
 
\end{thm}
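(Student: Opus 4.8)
The strategy is to combine the curvature estimate \eqref{weyl-leb-sca-1} from Theorem \ref{mono-key-bounds} with the characterization of $\bar{\lambda}_k$ in terms of the Yamabe invariant given in Proposition \ref{lambda-k-inv}. First I would observe that since $\sum_{m=1}^n c_1^2(X_m) > 0$, the estimate \eqref{weyl-leb-sca-1} shows that no metric $g$ on $M$ can have $s_g \equiv 0$, and more to the point that $M$ admits no metric of positive scalar curvature; hence $\mathcal{Y}(M) \leq 0$. By Proposition \ref{lambda-k-inv}, for any $k \geq \tfrac{2}{3} = \tfrac{n-2}{n-1}$ with $n=4$ we therefore have $\bar{\lambda}_k(M) = k\,\mathcal{Y}(M)$, so it suffices to produce an upper bound on $\mathcal{Y}(M)$.

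The key step is to bound $\mathcal{Y}(M)$ from above using \eqref{weyl-leb-sca-1}. Recall that for $\mathcal{Y}(M) \leq 0$ one has the standard identity
\begin{eqnarray*}
\mathcal{Y}(M) = -\inf_g \left( \int_M s_g^2 \, d\mu_g \right)^{1/2},
\end{eqnarray*}
where the infimum is taken over all unit-volume Riemannian metrics $g$ on $M$ (this follows from the conformal normalization of the Yamabe functional in dimension $4$ together with the fact that the Yamabe minimizer in each conformal class is the constant-scalar-curvature representative). Feeding \eqref{weyl-leb-sca-1} into this identity, we get
\begin{eqnarray*}
\left( \int_M s_g^2 \, d\mu_g \right)^{1/2} \geq \left( 32\pi^2 \sum_{m=1}^n c_1^2(X_m) \right)^{1/2} = 4\pi \sqrt{2 \sum_{m=1}^n c_1^2(X_m)}
\end{eqnarray*}
for every metric $g$, hence $\mathcal{Y}(M) \leq -4\pi \sqrt{2\sum_{m=1}^n c_1^2(X_m)}$. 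Multiplying by $k$ and using $\bar{\lambda}_k(M) = k\,\mathcal{Y}(M)$ yields the claimed bound, which is strictly negative precisely because $\sum_{m=1}^n c_1^2(X_m) > 0$.

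The only genuinely delicate point is justifying the scalar-curvature-squared characterization of $\mathcal{Y}(M)$ and checking that \eqref{weyl-leb-sca-1} may legitimately be combined with it: the bound \eqref{weyl-leb-sca-1} holds for \emph{all} metrics on $M$, so in particular it holds after conformally rescaling to unit volume and constant scalar curvature within each conformal class, which is exactly what enters the infimum defining $\mathcal{Y}(M)$. I would also note that the hypothesis $k \geq \tfrac{2}{3}$ is exactly the threshold $\tfrac{n-2}{n-1}$ needed for Proposition \ref{lambda-k-inv} to apply in dimension four, so no additional case analysis is required; the case $\mathcal{Y}(M) > 0$ of that proposition is vacuous here. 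This step aside, the argument is a direct substitution, and there is no serious obstacle beyond correctly tracking the constants $32\pi^2$ versus $4\pi\sqrt{2\,\cdot\,}$.
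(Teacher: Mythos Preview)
Your approach is essentially the same as the paper's: combine the scalar-curvature estimate \eqref{weyl-leb-sca-1}, the formula $\mathcal{Y}(M) = -\inf_g \bigl(\int_M s_g^2\, d\mu_g\bigr)^{1/2}$ valid when $\mathcal{Y}(M)\le 0$, and Proposition~\ref{lambda-k-inv}.

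One point needs tightening. The $L^2$ lower bound \eqref{weyl-leb-sca-1} by itself does \emph{not} rule out metrics of positive scalar curvature: a metric with $s_g>0$ everywhere still has $\int_M s_g^2\, d\mu_g>0$, so the inequality is no obstruction. The paper instead observes that Theorem~\ref{new-BF-non-vanishing} (which underlies Theorem~\ref{mono-key-bounds}) furnishes non-zero monopole classes on $M$, and it is the existence of such classes that forbids positive scalar curvature and hence gives $\mathcal{Y}(M)\le 0$. Once you replace your justification of $\mathcal{Y}(M)\le 0$ with this monopole-class argument, your proof matches the paper's line for line.
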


\begin{proof}
As was already mentioned in Subsection \ref{sub-50}, the existence of the non-zero monopole classes implies the non-existence of metric of positive scalar curvature. Since Theorem \ref{new-BF-non-vanishing} tells us that the existence of non-zero monopole classes of $M$, we conclude that $M$ cannot admit any metric of positive scalar curvature. In particular, a result of Kobayashi \cite{kob} and this fact imply 
\begin{eqnarray}\label{nega-yama}
{\mathcal Y}(M) \leq 0. 
\end{eqnarray}
Under this situation, it is also known that the Yamabe invariant of $M$ is given by the following formula \cite{leb-44, leb-3}: 
\begin{eqnarray*}
{\mathcal Y}(M) = - \inf_{g} \Big( {\int}_{M}s^{2}_{g} d {\mu}_{g} \Big)^{\frac{1}{2}}. 
\end{eqnarray*}
On the other hand, we have the bound (\ref{weyl-leb-sca-1}) which holds for any Riemannian metric $g$ on $M$. 
\begin{eqnarray*}
{\int}_{M} s^{2}_{g} d{\mu}_{g} \geq 32{\pi}^{2} \sum ^{n}_{m=1} c^2_{1}(X_{m}). 
\end{eqnarray*}
We therefore conclude that the Yamabe invariant of $M$ satisfies 
\begin{eqnarray*}
{\mathcal Y}(M) \leq -4{\pi}\sqrt{2  \sum ^{n}_{m=1}c^2_{1}(X_{m})} < 0, 
\end{eqnarray*}
On the other hand, by Proposition \ref{lambda-k-inv} and (\ref{nega-yama}), we have the following equality:
$$
\bar{\lambda}_{k}(X) =  k{\mathcal Y}(X)
$$
if $k \geq \frac{2}{3}$. Therefore, we have the desired bound as follows. 
\begin{eqnarray*}
\bar{\lambda}_{k}(X) \leq -4k{\pi}\sqrt{2  \sum ^{n}_{m=1} c^2_{1}(X_{m})} < 0. 
\end{eqnarray*}
\end{proof}

Lemma \ref{mini-scal-lem} and Theorem \ref{main-CCC} tell us that the following result holds.
\begin{thm}\label{scalar-bound-1}
For $m =1,2,3$, let $X_m$ be a BF-admissible $4$-manifold in the sense of Definition \ref{BF-adm-def}. Assume that $\sum^n_{m=1}c^2_{1}(X_{m})> 0$ is satisfied, where $n=2,3$ and $c^2_{1}(X_{m}) = 2\eu(X_{m}) + 3 \sign(X_{m})$. Let $N$ be a closed oriented smooth $4$-manifold with $b^{+}(N)=0$. If there is a solution $\{g(t)\}$, $t \in [0, T)$, to the normalized Ricci flow on a connected sum $M:=(\#^{n}_{m=1}{X}_{m}) \# N$, where $n=2,3$, then the solution must satisfy the following bound:
\begin{eqnarray}\label{const-1}
\hat{s}_{g(t)}:=\min_{x \in X}{s}_{g(t)}(x) \leq - \Big( \frac{4{\pi}}{(vol_{g(0)})^{1/2}} \sqrt{2\sum^n_{m=1}c^2_{1}(X_{m})} \Big) < 0. 
\end{eqnarray}
\end{thm}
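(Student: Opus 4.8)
The plan is to combine the two ingredients already assembled: the lower bound on the minimum of the scalar curvature coming from Perelman's $\bar{\lambda}$-type invariant (Lemma \ref{mini-scal-lem}) and the negativity bound on $\bar{\lambda}_k(M)$ established in Theorem \ref{main-CCC}. Since the connected sum $M := (\#^n_{m=1}X_m)\#N$ is a closed oriented smooth $4$-manifold of dimension $n=4$, I first fix the parameter choice $k = 2/3$, which is the borderline value $\frac{n-2}{n-1}$ in dimension $4$ and is admissible for both Theorem \ref{main-CCC} and Proposition \ref{lambda-k-inv}. With this choice, Theorem \ref{main-CCC} gives
\begin{eqnarray*}
\bar{\lambda}_{2/3}(M) \leq -\frac{8\pi}{3}\sqrt{2\sum^n_{m=1}c^2_1(X_m)} < 0,
\end{eqnarray*}
using the hypothesis $\sum^n_{m=1}c^2_1(X_m) > 0$. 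In particular $\bar{\lambda}_{2/3}(M) < 0$, so the hypothesis of Lemma \ref{mini-scal-lem} is satisfied with this $k$.

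Next I invoke Lemma \ref{mini-scal-lem} directly: if $\{g(t)\}$, $t\in[0,T)$, is any solution to the normalized Ricci flow on $M$, then for every $t$,
\begin{eqnarray*}
\hat{s}_{g(t)} \leq \frac{\bar{\lambda}_{2/3}(M)}{(2/3)(vol_{g(0)})^{2/n}} = \frac{3\,\bar{\lambda}_{2/3}(M)}{2(vol_{g(0)})^{1/2}},
\end{eqnarray*}
where $n=4$ so that $2/n = 1/2$. Substituting the bound on $\bar{\lambda}_{2/3}(M)$ from the previous step, the factor of $3/2$ cancels the $2/3$ and the numerical constants combine to give
\begin{eqnarray*}
\hat{s}_{g(t)} \leq \frac{3}{2(vol_{g(0)})^{1/2}}\cdot\left(-\frac{8\pi}{3}\sqrt{2\sum^n_{m=1}c^2_1(X_m)}\right) = -\frac{4\pi}{(vol_{g(0)})^{1/2}}\sqrt{2\sum^n_{m=1}c^2_1(X_m)},
\end{eqnarray*}
which is precisely the claimed inequality (\ref{const-1}), and it is strictly negative by the positivity assumption. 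I should also note explicitly that Theorem \ref{main-CCC}'s bound (\ref{weyl-leb-sca-1}), used inside its proof via LeBrun's Yamabe formula, relies on the non-vanishing of the stable cohomotopy Seiberg-Witten invariant of $M$ guaranteed by Theorem \ref{new-BF-non-vanishing} for the BF-admissible summands $X_m$; so the chain of implications is self-contained given the earlier results.

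There is essentially no serious obstacle here — the statement is a formal corollary of Lemma \ref{mini-scal-lem} and Theorem \ref{main-CCC} once the dimension is specialized to $4$ and $k$ is set to $2/3$. The one point requiring a little care is the bookkeeping of the exponent $2/n$ versus $1/2$ and making sure the constant $2/3$ from the definition of $\bar{\lambda}_{2/3}$ exactly cancels against the reciprocal appearing in Lemma \ref{mini-scal-lem}; this is what makes the final constant come out as the clean $4\pi$ rather than something $k$-dependent. A secondary subtlety worth a sentence in the writeup is that Lemma \ref{mini-scal-lem} is stated for \emph{any} solution on a time interval $[0,T)$, not merely quasi-non-singular ones, so the conclusion applies to arbitrary (in particular finite-time) solutions — which is exactly the generality needed for the applications to the FZZ conjecture in the subsequent subsections.
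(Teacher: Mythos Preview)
Your proof is correct and follows exactly the approach the paper indicates: the theorem is stated as an immediate consequence of Lemma \ref{mini-scal-lem} and Theorem \ref{main-CCC}, and you have carried out precisely that combination, correctly tracking the constants so that the factor $k$ cancels and the clean $4\pi$ emerges. The only cosmetic remark is that the cancellation of $k$ in $\bar{\lambda}_k(M)/k$ works for any $k \geq 2/3$, so singling out $k=2/3$ is a choice rather than a necessity, but this does not affect the validity of the argument.
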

Notice that the right hand side of the bound (\ref{const-1}) is a negative constant of independent of  both $x \in X$ and $t$. \par
We also need to recall the following result. 
\begin{lem}[\cite{fz-1, ishi-1}]\label{FZZ-prop}
Let $X$ be a closed oriented Riemannian 4- manifold and assume that there is a long time solution $\{g(t)\}$, $t \in [0, \infty)$, to the normalized Ricci flow. Assume moreover that the solution satisfies $|{s}_{g(t)}| \leq C$ and 
\begin{eqnarray}\label{mini-bound}
\hat{s}_{g(t)} \leq -c <0,
\end{eqnarray}
where the constants $C$ and $c$ is independent of both $x \in X$ and time $t \in [0, \infty)$. Then, the trace-free part $\stackrel{\circ}{r}_{g(t)}$ of the Ricci curvature satisfies 
\begin{eqnarray*}
{\int}^{\infty}_{0} {\int}_{X} |\stackrel{\circ}{r}_{g(t)}|^2 d{\mu}_{g(t)}dt < \infty. 
\end{eqnarray*}
In particular, when $\ell \rightarrow \infty$, 
\begin{eqnarray}\label{fzz-ricci-0}
{\int}^{\ell+1}_{\ell} {\int}_{X} |\stackrel{\circ}{r}_{g(t)}|^2 d{\mu}_{g(t)}dt \longrightarrow 0. 
\end{eqnarray}
\end{lem}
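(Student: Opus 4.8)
The plan is to integrate a single evolution identity for $\int_X s_{g(t)}\, d\mu_{g(t)}$ along the normalized Ricci flow and then to absorb the error term by exploiting the evolution of the minimum scalar curvature. First I would record the standard evolution equations under the normalized Ricci flow in dimension four: writing $\overline{s} = \overline{s}_{g(t)}$, one has $\frac{\partial}{\partial t} s = \Delta_{g(t)} s + 2|Ric_{g(t)}|^2 - \frac12 \overline{s}\, s$ (here $\Delta_{g(t)} = \mathrm{tr}_{g(t)}\nabla^2$, so that $\Delta_{g(t)} s \geq 0$ at a spatial minimum of $s$ and $\int_X \Delta_{g(t)} s\, d\mu_{g(t)} = 0$) and $\frac{\partial}{\partial t}(d\mu_{g(t)}) = (\overline{s} - s)\, d\mu_{g(t)}$, together with the pointwise decomposition $|Ric_{g(t)}|^2 = |\stackrel{\circ}{r}_{g(t)}|^2 + \tfrac14 s^2$. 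Since the normalized flow preserves volume, $\int_X s_{g(t)}\, d\mu_{g(t)} = \overline{s}_{g(t)}\, vol_{g(0)}$, and a direct computation combining these three facts (using $\int_X\Delta_{g(t)}s\,d\mu_{g(t)}=0$) yields
\[ \frac{d}{dt}\int_X s_{g(t)}\, d\mu_{g(t)} \;=\; 2\int_X |\stackrel{\circ}{r}_{g(t)}|^2\, d\mu_{g(t)} \;-\; \frac12\int_X \bigl(s_{g(t)} - \overline{s}_{g(t)}\bigr)^2\, d\mu_{g(t)} . \]
Because $|s_{g(t)}| \leq C$, the primitive $\Phi(t) := \int_X s_{g(t)}\, d\mu_{g(t)}$ satisfies $|\Phi(t)| \leq C\, vol_{g(0)}$, so integrating the identity over $[0,T]$ reduces the first assertion to the estimate $\int_0^\infty \int_X (s_{g(t)} - \overline{s}_{g(t)})^2\, d\mu_{g(t)}\, dt < \infty$.

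The step I expect to be the main obstacle is precisely this error estimate, and I would control it via the minimum scalar curvature $\hat{s}_{g(t)}$. Since $\overline{s}_{g(t)}$ is the constant minimizing $a \mapsto \int_X (s_{g(t)} - a)^2\, d\mu_{g(t)}$ and $0 \leq s_{g(t)} - \hat{s}_{g(t)} \leq 2C$ pointwise (because $|s_{g(t)}| \leq C$), one obtains the pointwise-in-time bound
\[ \int_X \bigl(s_{g(t)} - \overline{s}_{g(t)}\bigr)^2\, d\mu_{g(t)} \;\leq\; \int_X \bigl(s_{g(t)} - \hat{s}_{g(t)}\bigr)^2\, d\mu_{g(t)} \;\leq\; 2C\int_X \bigl(s_{g(t)} - \hat{s}_{g(t)}\bigr)\, d\mu_{g(t)} \;=\; 2C\, vol_{g(0)}\bigl(\overline{s}_{g(t)} - \hat{s}_{g(t)}\bigr), \]
so it suffices to prove $\int_0^\infty \bigl(\overline{s}_{g(t)} - \hat{s}_{g(t)}\bigr)\, dt < \infty$. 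Here I would apply the maximum principle to the evolution of $s$: at a point where $s_{g(t)}$ realizes its spatial minimum the Laplacian term is non-negative and $|Ric_{g(t)}|^2 \geq \tfrac14 \hat{s}_{g(t)}^2$, so $\hat{s}_{g(t)}$, which is locally Lipschitz in $t$, satisfies (in the barrier sense, hence a.e., which is enough for the integration below)
\[ \frac{d}{dt}\hat{s}_{g(t)} \;\geq\; \tfrac12 \hat{s}_{g(t)}^2 - \tfrac12 \overline{s}_{g(t)}\hat{s}_{g(t)} \;=\; \tfrac12\, |\hat{s}_{g(t)}|\, \bigl(\overline{s}_{g(t)} - \hat{s}_{g(t)}\bigr) \;\geq\; \tfrac{c}{2}\bigl(\overline{s}_{g(t)} - \hat{s}_{g(t)}\bigr) \;\geq\; 0 , \]
where I used $\hat{s}_{g(t)} \leq \overline{s}_{g(t)}$ and $\hat{s}_{g(t)} \leq -c < 0$. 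Thus $\hat{s}_{g(t)}$ is non-decreasing and, being trapped in $[-C,-c]$, has finite total increase; integrating the last inequality over $[0,T]$ gives $\int_0^T \bigl(\overline{s}_{g(t)} - \hat{s}_{g(t)}\bigr)\, dt \leq \tfrac{2}{c}\bigl(\hat{s}_{g(T)} - \hat{s}_{g(0)}\bigr) \leq \tfrac{2}{c}(C - c)$, uniformly in $T$.

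Combining the pieces, from the $\Phi$-identity together with the two bounds one obtains, for every $T \geq 0$,
\[ 2\int_0^T\!\!\int_X |\stackrel{\circ}{r}_{g(t)}|^2\, d\mu_{g(t)}\, dt \;=\; \Phi(T) - \Phi(0) + \tfrac12\int_0^T\!\!\int_X \bigl(s_{g(t)} - \overline{s}_{g(t)}\bigr)^2\, d\mu_{g(t)}\, dt \;\leq\; 2C\, vol_{g(0)} + C\, vol_{g(0)}\!\int_0^T \bigl(\overline{s}_{g(t)} - \hat{s}_{g(t)}\bigr)\, dt , \]
and the right-hand side is bounded by a constant independent of $T$. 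This proves $\int_0^\infty \int_X |\stackrel{\circ}{r}_{g(t)}|^2\, d\mu_{g(t)}\, dt < \infty$. The remaining claim (\ref{fzz-ricci-0}) is then immediate: setting $I(T) = \int_0^T \int_X |\stackrel{\circ}{r}_{g(t)}|^2\, d\mu_{g(t)}\, dt$, the function $I$ is non-decreasing and bounded, hence converges as $T \to \infty$, so $\int_\ell^{\ell+1} \int_X |\stackrel{\circ}{r}_{g(t)}|^2\, d\mu_{g(t)}\, dt = I(\ell+1) - I(\ell) \to 0$ as $\ell \to \infty$.
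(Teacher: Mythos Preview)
The paper does not prove this lemma at all; it is stated with a citation to \cite{fz-1, ishi-1} and used as a black box. Your argument is correct and is essentially the standard proof from those references: derive the evolution identity $\frac{d}{dt}\int_X s_{g(t)}\,d\mu_{g(t)} = 2\int_X|\stackrel{\circ}{r}_{g(t)}|^2\,d\mu_{g(t)} - \tfrac12\int_X(s_{g(t)}-\overline{s}_{g(t)})^2\,d\mu_{g(t)}$, control the variance term by $(\overline{s}_{g(t)}-\hat{s}_{g(t)})$, and then use the maximum principle inequality $\frac{d}{dt}\hat{s}_{g(t)}\geq \tfrac12|\hat{s}_{g(t)}|(\overline{s}_{g(t)}-\hat{s}_{g(t)})$ together with the hypothesis $\hat{s}_{g(t)}\leq -c<0$ to get $\int_0^\infty(\overline{s}_{g(t)}-\hat{s}_{g(t)})\,dt<\infty$. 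All the constants and signs check out (in particular your convention $\Delta_{g(t)}=\mathrm{tr}_{g(t)}\nabla^2$ is the correct one for the scalar curvature evolution, opposite to the paper's convention $\Delta_g=d^*d$ used elsewhere), and the deduction of \eqref{fzz-ricci-0} from the finiteness of the total integral is immediate as you wrote.
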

Theorem \ref{scalar-bound-1} and Lemma \ref{FZZ-prop} imply the following result: 
\begin{thm}\label{key-prop-3}
For $m =1,2,3$, let $X_m$ be a BF-admissible $4$-manifold in the sense of Definition \ref{BF-adm-def}. And assume that $\sum^n_{m=1}c^2_{1}(X_{m})> 0$ is satisfied, where $n=2,3$ and $c^2_{1}(X_{m}) = 2\eu(X_{m}) + 3 \sign(X_{m})$. Let $N$ be a closed oriented smooth $4$-manifold with $b^{+}(N)=0$. If there is a quasi-non-singular solution $\{g(t)\}$ to the normalized Ricci flow on a connected sum $M:=(\#^{n}_{m=1}{X}_{m}) \# N$, where $n=2,3$, then
\begin{eqnarray}\label{fzz-ricci-011}
{\int}^{\ell+1}_{\ell} {\int}_{\Tilde{X}} |\stackrel{\circ}{r}_{g(t)}|^2 d{\mu}_{g(t)}dt \longrightarrow 0
\end{eqnarray}
holds when $\ell \rightarrow +\infty$. 
\end{thm}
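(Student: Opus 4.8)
The plan is to derive this as essentially an immediate consequence of the results assembled just before it, with the only real work being a reconciliation of hypotheses. First I would observe that a quasi-non-singular solution $\{g(t)\}$ is by Definition \ref{bs} a maximal solution with $T = \infty$, i.e.\ a long-time solution, which moreover satisfies $\sup_{M \times [0,\infty)}|s_{g(t)}| < \infty$. Thus there is a constant $C$, independent of $x \in M$ and $t \in [0,\infty)$, with $|s_{g(t)}| \leq C$; this furnishes the first hypothesis of Lemma \ref{FZZ-prop}.

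Next I would apply Theorem \ref{scalar-bound-1}. Since $M = (\#^{n}_{m=1}X_m)\#N$ with each $X_m$ BF-admissible, $N$ having $b^+(N)=0$, and $\sum_{m=1}^n c_1^2(X_m) > 0$, Theorem \ref{scalar-bound-1} tells us that any solution $\{g(t)\}$, $t \in [0,T)$, to the normalized Ricci flow on $M$ satisfies
\begin{eqnarray*}
\hat{s}_{g(t)} \leq -\Big(\frac{4\pi}{(vol_{g(0)})^{1/2}}\sqrt{2\sum_{m=1}^n c_1^2(X_m)}\Big) =: -c < 0,
\end{eqnarray*}
and, crucially, the right-hand side is a negative constant independent of both $x$ and $t$. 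This is exactly the pinching hypothesis (\ref{mini-bound}) of Lemma \ref{FZZ-prop}. Here I would note that the constant $c$ is positive precisely because $\sum_{m=1}^n c_1^2(X_m) > 0$ and $vol_{g(0)}$ is a fixed finite positive number (the normalized flow preserves volume, so $vol_{g(t)} = vol_{g(0)}$).

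With both hypotheses of Lemma \ref{FZZ-prop} in hand---long-time existence $T = \infty$ from quasi-non-singularity, the uniform scalar curvature bound $|s_{g(t)}| \leq C$ from quasi-non-singularity, and the uniform negative upper bound $\hat{s}_{g(t)} \leq -c$ from Theorem \ref{scalar-bound-1}---I would invoke Lemma \ref{FZZ-prop} to conclude $\int_0^\infty \int_X |\stackrel{\circ}{r}_{g(t)}|^2\, d\mu_{g(t)}\, dt < \infty$, and in particular the conclusion (\ref{fzz-ricci-0}), namely $\int_\ell^{\ell+1}\int_X |\stackrel{\circ}{r}_{g(t)}|^2\, d\mu_{g(t)}\, dt \to 0$ as $\ell \to \infty$. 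This is the asserted limit (\ref{fzz-ricci-011}) (the tilde over $X$ in the statement being a typo for $M$, or simply denoting the manifold carrying the flow).

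The argument is therefore a short chaining of citations rather than a genuinely new computation; the only point requiring any care---and the closest thing to an obstacle---is verifying that the quasi-non-singularity hypothesis, which controls only $|s_{g(t)}|$ and not the full curvature tensor $|Rm_{g(t)}|$, supplies exactly what Lemma \ref{FZZ-prop} needs, and that Theorem \ref{scalar-bound-1}'s lower bound is genuinely $x$- and $t$-independent so that it plays the role of the constant $c$ in (\ref{mini-bound}). Both are true by inspection of the cited statements, so no further estimates are needed.
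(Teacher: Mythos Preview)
Your proposal is correct and follows exactly the approach the paper intends: the paper states this theorem immediately after the sentence ``Theorem \ref{scalar-bound-1} and Lemma \ref{FZZ-prop} imply the following result,'' with no further proof given, and your write-up simply unpacks that chain of citations. Your remarks about why quasi-non-singularity supplies precisely the scalar-curvature bound Lemma \ref{FZZ-prop} needs, and about the $\Tilde{X}$ typo, are apt.
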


\subsection{Obstruction}\label{sub-52}

By using curvature bound (\ref{weyl-leb-sca-2}) and Theorem \ref{key-prop-3}, we shall prove an obstruction to the existence of non-singular solution to the normalized Ricci flow in dimension four. See Theorem \ref{Ricci-non-sin} stated below. \par 
Let $X$ be a closed oriented Riemannian $4$-manifold. Then, the Chern-Gauss-Bonnet formula and the Hirzebruch signature formula tell us that the following formulas hold for any Riemannian metric $g$ on $X$:
\begin{eqnarray*}
\sign(X)=\frac{1}{12{\pi}^2}{\int}_{X}\Big(|W^+_{g}|^2-|W^-_{g}|^2 \Big) d{\mu}_{g}, \\
\eu(X) = \frac{1}{8{\pi}^2}{\int}_{X}\Big(\frac{{s}^2_{g}}{24}+|W^+_{g}|^2+|W^-_{g}|^2-\frac{|\stackrel{\circ}{r}_{g}|^2}{2}  \Big) d{\mu}_{g}, 
\end{eqnarray*}
where $W^+_{g}$ and $W^-_{g}$ denote respectively the self-dual and anti-self-dual Weyl curvature of the metric $g$ and $\stackrel{\circ}{r}_{g}$ is the trace-free part of the Ricci curvature of the metric $g$. And $s_{g}$ is again the scalar curvature of the metric $g$ and $d\mu_{{g}}$ is the volume form with respect to $g$. By these formulas, we get the following:
\begin{eqnarray}\label{4-im}
2\eu(X) + 3\sign(X) = \frac{1}{4{\pi}^2}{\int}_{X}\Big(2|W^{+}_{g}|^2+\frac{{s}^2_{g}}{24}-\frac{|\stackrel{\circ}{r}_{g}|^2}{2} \Big) d{\mu}_{g},  
\end{eqnarray}
Then, we are able to prove the following result, which is used to prove Theorems \ref{main-AcC} and \ref{main-AcCC}. 

\begin{main}\label{Ricci-non-sin}
Let $N$ be a closed oriented smooth $4$-manifold with $b^{+}(N)=0$. For $m =1,2,3$, For $m =1,2,3$, let $X_m$ be a BF-admissible $4$-manifold in the sense of Definition \ref{BF-adm-def}. Assume also that $\sum^n_{m=1}c^2_{1}(X_{m}) > 0$ is satisfied, where $n=2,3$ and $c^2_{1}(X_{m}) = 2\eu(X_{m}) + 3 \sign(X_{m})$. Then, on a connected sum 
$$
M:=(\#_{m=1}^{n}{X}_{m}) \# N,
$$
where $n=2,3$, there is no quasi-non-singular solution to the normalized Ricci flow for any initial metric if the following holds:
\begin{eqnarray}\label{asm}
4{n}-\Big(2\eu(N)+3\sign(N) \Big) >  \frac{1}{3}\sum_{m=1}^{n}c^2_{1}(X_{m}). 
\end{eqnarray}
\end{main}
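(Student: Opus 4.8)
The plan is to argue by contradiction, feeding the decay of the trace-free Ricci curvature supplied by Theorem \ref{key-prop-3} into the Gauss--Bonnet/signature identity (\ref{4-im}), whose right-hand side will be controlled from below by the curvature estimate (\ref{weyl-leb-sca-2}). First I would reformulate the hypothesis topologically. Since $M$ is a connected sum of the $n+1$ pieces $X_1,\dots,X_n,N$, additivity of the signature together with the connected-sum formula for the Euler characteristic gives
\[
2\eu(M)+3\sign(M)=\sum_{m=1}^n c_1^2(X_m)+\big(2\eu(N)+3\sign(N)\big)-4n ,
\]
so that the assumption (\ref{asm}) is exactly equivalent to the strict inequality $2\eu(M)+3\sign(M)<\tfrac23\sum_{m=1}^n c_1^2(X_m)$. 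It therefore suffices to show that the existence of a quasi-non-singular solution on $M$ would force $2\eu(M)+3\sign(M)\geq\tfrac23\sum_{m=1}^n c_1^2(X_m)$.

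So suppose $\{g(t)\}$, $t\in[0,\infty)$, is such a solution. Rearranging (\ref{4-im}) applied to $M$ with $g=g(t)$,
\[
\int_M\Big(2|W^+_{g(t)}|^2+\tfrac{1}{24}s_{g(t)}^2\Big)\,d\mu_{g(t)}=4\pi^2\big(2\eu(M)+3\sign(M)\big)+\tfrac12\int_M|\stackrel{\circ}{r}_{g(t)}|^2\,d\mu_{g(t)} .
\]
The key step is the pointwise algebraic inequality $2|W^+|^2+\tfrac{1}{24}s^2\geq\tfrac{1}{27}\big(s-\sqrt6\,|W^+|\big)^2$, which one verifies by observing that the quadratic form in the variables $(s,|W^+|)$ given by the difference of the two sides is positive semidefinite; in fact its determinant vanishes, so $\tfrac1{27}$ is the sharp constant. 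Integrating this inequality over $M$ and inserting the bound (\ref{weyl-leb-sca-2}) yields, for every $t$,
\[
\int_M\Big(2|W^+_{g(t)}|^2+\tfrac{1}{24}s_{g(t)}^2\Big)\,d\mu_{g(t)}\geq\frac{72\pi^2}{27}\sum_{m=1}^n c_1^2(X_m)=\frac{8\pi^2}{3}\sum_{m=1}^n c_1^2(X_m).
\]

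Combining the last two displays gives $\tfrac12\int_M|\stackrel{\circ}{r}_{g(t)}|^2\,d\mu_{g(t)}\geq\tfrac{8\pi^2}{3}\sum_{m=1}^n c_1^2(X_m)-4\pi^2\big(2\eu(M)+3\sign(M)\big)$ for all $t$. Integrating over $t\in[\ell,\ell+1]$ and letting $\ell\to\infty$, Theorem \ref{key-prop-3} (applicable since $\sum_{m=1}^n c_1^2(X_m)>0$) forces the left-hand side to $0$, hence $2\eu(M)+3\sign(M)\geq\tfrac23\sum_{m=1}^n c_1^2(X_m)$, contradicting the reformulation of (\ref{asm}) above; so no quasi-non-singular solution can exist. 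The hard part will be pinning down the sharp constant $\tfrac1{27}$ in the pointwise Weyl/scalar estimate, as it is precisely this value that converts the lower bound from (\ref{weyl-leb-sca-2}) into the coefficient $\tfrac13$ appearing in (\ref{asm}); the remaining steps — the topological bookkeeping and the invocation of Theorem \ref{key-prop-3} and estimate (\ref{weyl-leb-sca-2}) — are routine.
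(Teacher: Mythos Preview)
Your proof is correct and follows essentially the same line as the paper's: both combine the Gauss--Bonnet/signature identity (\ref{4-im}), the pointwise quadratic inequality $2|W^+|^2+\tfrac{1}{24}s^2\geq\tfrac{1}{27}(s-\sqrt{6}|W^+|)^2$ (which the paper simply cites from LeBrun), the curvature bound (\ref{weyl-leb-sca-2}), and the Ricci decay from Theorem \ref{key-prop-3} to deduce $2\eu(M)+3\sign(M)\geq\tfrac{2}{3}\sum_m c_1^2(X_m)$, then contradict the reformulated hypothesis via the connected-sum computation. The only cosmetic difference is that the paper averages over $[\ell,\ell+1]$ before passing to the limit, whereas you rearrange (\ref{4-im}) first and then integrate; you also supply the discriminant check for the sharp constant $\tfrac{1}{27}$, which the paper outsources to the literature.
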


\begin{proof}
First of all, by (\ref{weyl-leb-sca-2}), we obtain the following bound which holds for any Riemannian metric $g$ on $M$: 
\begin{eqnarray*}
{\int}_{M}\Big({s}_{g}-\sqrt{6}|W^{+}_{g}|\Big)^2 d{\mu}_{g} \geq 72{\pi}^{2}\sum^{m}_{n=1}{c}^2_{1}(X_{m}).  
\end{eqnarray*}
On the other hand, as was already noticed in \cite{leb-11, ishi-leb-2}, we also have the following inequality for any Riemannian metric $g$ on $M$ (cf. Proposition 3.1 in \cite{leb-11}): 
\begin{eqnarray*}
\int_{M}\Big(2|W^{+}_{g}|^{2} + \frac{s^{2}_{g}}{24} \Big)d{\mu}_{g} \geq \frac{1}{27}{\int}_{M}\Big({s}_{g}-\sqrt{6}|W^{+}_{g}| \Big)^2 d{\mu}_{g}. 
\end{eqnarray*}
By the above inequalities, we conclude that any Riemannian metric $g$ on $M$ must satisfy the following bound: 
\begin{eqnarray}\label{u-11}
\frac{1}{4\pi^{2}}\int_{M}\Big(2|W^{+}_{g}|^{2} + \frac{s^{2}_{g}}{24} \Big)d{\mu}_{g} \geq \frac{2}{3}\sum_{m=1}^{n} c^2_{1}(X_{m}). 
\end{eqnarray}
Suppose now that there is a quasi-non-singular solution $\{g(t)\}$ to the normalized Ricci flow on $M$. Then, we have the following bound by (\ref{u-11})
\begin{eqnarray}\label{u-1333}
\frac{1}{4\pi^{2}}\int_{M}\Big(2|W^{+}_{g(t)}|^{2} + \frac{s^{2}_{g(t)}}{24} \Big)d{\mu}_{g(t)} \geq \frac{2}{3}\sum_{m=1}^{n}c^2_{1}(X_{m}). 
\end{eqnarray}
On the other hand, we also have the following inequality from (\ref{4-im})
\begin{eqnarray*}
2\eu(M) + 3\sign(M) = \frac{1}{4{\pi}^2}{\int}_{M}\Big(2|W^{+}_{g(t)}|^2+\frac{{s}^2_{g(t)}}{24}-\frac{|\stackrel{\circ}{r}_{g(t)}|^2}{2} \Big) d{\mu}_{g(t)}.  
\end{eqnarray*}
By this formula and (\ref{fzz-ricci-011}), we are able to obtain 
\begin{eqnarray*}
2\eu(M) + 3\sign(M) &=& \lim_{\ell \longrightarrow \infty} {\int}^{\ell+1}_{\ell} \Big(2\eu(M) + 3\sign(M) \Big)dt \\
&=& \lim_{\ell \longrightarrow \infty} \frac{1}{4{\pi}^2}{\int}^{\ell+1}_{\ell} {\int}_{M}\Big(2|W^{+}_{g(t)}|^2+\frac{{s}^2_{g(t)}}{24}-\frac{|\stackrel{\circ}{r}_{g(t)}|^2}{2} \Big) d{\mu}_{g(t)}dt \\
&=& \lim_{\ell \longrightarrow \infty}\frac{1}{4{\pi}^2}{\int}^{\ell+1}_{\ell} {\int}_{M}\Big(2|W^{+}_{g(t)}|^2+\frac{{s}^2_{g(t)}}{24}\Big) d{\mu}_{g(t)}dt.  
\end{eqnarray*}
This and the bound (\ref{u-1333}) imply 
\begin{eqnarray*}
2\eu(M) + 3\sign(M) &=&  \lim_{\ell \longrightarrow \infty}\frac{1}{4{\pi}^2}{\int}^{\ell+1}_{\ell} {\int}_{M}\Big(2|W^{+}_{g(t)}|^2+\frac{{s}^2_{g(t)}}{24}\Big) d{\mu}_{g(t)}dt \\ 
 &\geq& \lim_{\ell \longrightarrow \infty}\frac{2}{3} {\int}^{\ell+1}_{\ell} \sum_{m=1}^{n} c^2_{1}(X_{m}) dt \\
 &=& \frac{2}{3} \sum_{m=1}^{n} c^2_{1}(X_{m}). 
\end{eqnarray*}
On the other hand, a direct computation  tells us that
\begin{eqnarray*}
2\eu(M) + 3\sign(M) = \sum_{m=1}^{n} c^2_{1}(X_{m}) - 4{n}+\Big(2\eu(N)+3\sign(N) \Big). 
\end{eqnarray*}
We therefore obtain 
\begin{eqnarray*}
\sum_{m=1}^{n} c^2_{1}(X_{m}) - 4{n}+\Big(2\eu(N)+3\sign(N) \Big) \geq \frac{2}{3} \sum_{m=1}^{n} c^2_{1}(X_{m}).
\end{eqnarray*}
Equivalently, 
\begin{eqnarray*}
4{n}-\Big(2\eu(N)+3\sign(N) \Big) \leq  \frac{1}{3} \sum_{m=1}^{n} c^2_{1}(X_{m}). 
\end{eqnarray*}
By contraposition, we are able to get the desired result. Namely, under (\ref{asm}),  there is no quasi-non-singular solution to the normalized Ricci flow on the connected sum $M$ for any initial metric. 
\end{proof}

As a special case of Theorem \ref{Ricci-non-sin}, we obtain
\begin{cor}\label{speical-ein}
For $m=1,2$, let $X_{m}$ be a BF-admissible $4$-manifold. Consider a connected sum 
$$
M:=(\#_{m=1}^{j} X_{m}) \# (\Sigma_{g} \times \Sigma_{h}) \# \ell_{1}({S}^{1} \times {S}^{3}) \# \ell_{2} \overline{{\mathbb C}{P}^{2}}, 
$$
 where $j=1,2$, $\ell_{1}, \ell_{2} \geq 0$ and $g, h$ are odd integers $\geq 1$. Then there is no quasi-non-singular solution to the normalized Ricci flow on $M$ if 
\begin{eqnarray*}
4(2+\ell_{1}) + \ell_{2} > \frac{1}{3}\Big( \sum_{m=1}^{j} 2\eu(X_{m})+3\sign(X_{m})+4(1-h)(1-g) \Big).  
\end{eqnarray*}
\end{cor}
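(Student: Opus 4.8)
The plan is to realize Corollary \ref{speical-ein} as a direct specialization of Theorem \ref{Ricci-non-sin}. First I would identify the pieces: set $N := \Sigma_g \times \Sigma_h \# \ell_1(S^1 \times S^3) \# \ell_2 \overline{\mathbb{CP}}{}^2$ and take $n = j \in \{1,2\}$ with the $X_m$ the given BF-admissible summands, so that $M = (\#_{m=1}^{j} X_m) \# N$ matches the hypothesis of Theorem \ref{Ricci-non-sin}. The key point to check is that $b^+(N) = 0$: since $\Sigma_g \times \Sigma_h$ with $b^+ = 1 + 2gh$ contributes positively, I should instead absorb one of the two summands differently—rather, here the trick is that $\Sigma_g \times \Sigma_h$ has $b^+ > 0$, so it does not literally fit into $N$. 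The correct reading is that in the statement of Theorem \ref{Ricci-non-sin} we may let one of the $X_m$ be $\Sigma_g \times \Sigma_h$ itself (it is BF-admissible for $g,h$ odd by Theorem \ref{main-A}), taking $n = j+1$, and then $N := \ell_1(S^1 \times S^3) \# \ell_2 \overline{\mathbb{CP}}{}^2$, which does have $b^+(N) = 0$.

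With this reindexing, the next step is the arithmetic. I would compute $2\eu + 3\sign$ of the extra summands: $S^1 \times S^3$ has $\eu = 0$, $\sign = 0$, so it contributes $0$; $\overline{\mathbb{CP}}{}^2$ has $\eu = 3$, $\sign = -1$, so $2\eu + 3\sign = 3$, hence $\ell_2$ copies contribute $3\ell_2$. Thus $2\eu(N) + 3\sign(N) = 3\ell_2$, and the left side of \eqref{asm}, namely $4n - (2\eu(N)+3\sign(N))$ with $n = j+1$, becomes $4(j+1) - 3\ell_2$. Wait—this does not match the stated $4(2+\ell_1) + \ell_2$; the discrepancy signals that $S^1 \times S^3$ must also be folded into the count differently, or that $\ell_1(S^1\times S^3)$ is being treated as contributing to the "$X_m$" side. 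Reconciling the bookkeeping so that the final inequality reads exactly $4(2+\ell_1) + \ell_2 > \tfrac{1}{3}(\sum 2\eu(X_m) + 3\sign(X_m) + 4(1-h)(1-g))$ is the one genuinely delicate step: I expect the resolution is that each $S^1 \times S^3$ can be viewed as contributing an extra unit of "$n$" with trivial $\eu, \sign$, giving the $4\ell_1$ term, while $\Sigma_g \times \Sigma_h$ is counted as one more $X_m$ with $c_1^2 = 2\eu + 3\sign = 2 \cdot 2(1-g)(1-h) + 0 = 4(1-g)(1-h)$ (using $\eu(\Sigma_g \times \Sigma_h) = 4(g-1)(h-1)$ and $\sign = 0$), which is nonnegative and explains the $4(1-h)(1-g)$ term inside the sum on the right.

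Concretely, then: apply Theorem \ref{Ricci-non-sin} with the collection of BF-admissible pieces being $X_1, \dots, X_j$ together with $\Sigma_g \times \Sigma_h$, and with $N = \ell_1(S^1\times S^3) \# \ell_2\overline{\mathbb{CP}}{}^2$. One must verify $\sum c_1^2 > 0$, which holds as soon as $\Sigma_g \times \Sigma_h$ is nontrivial (or $g, h$ large), since $4(1-g)(1-h) \geq 0$ and any BF-admissible $X_m$ has $c_1^2(X_m) = 2\eu(X_m)+3\sign(X_m)$, with the sum positive under the standing assumptions (this should be spelled out, perhaps requiring $g$ or $h \geq 2$, which is consistent with "$g,h \geq 1$ odd" once one excludes the degenerate all-trivial case). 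Then $b^+(N) = 0$ is immediate, $2\eu(N)+3\sign(N) = 3\ell_2$, and the inequality \eqref{asm} of Theorem \ref{Ricci-non-sin} becomes, after substituting and rearranging, precisely the displayed condition. The main obstacle, as flagged, is getting the index shift and the $\eu$-$\sign$ accounting to land exactly on the constants $4(2+\ell_1)$ and $\ell_2$ as written; once that is pinned down the corollary is a one-line consequence of Theorem \ref{Ricci-non-sin}.
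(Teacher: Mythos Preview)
Your overall strategy is exactly the paper's: treat $\Sigma_g\times\Sigma_h$ as one more BF-admissible summand (via Theorem~\ref{main-A}), set $N=\ell_1(S^1\times S^3)\#\ell_2\overline{\mathbb{CP}}{}^2$ so that $b^+(N)=0$, and apply Theorem~\ref{Ricci-non-sin} with $n=j+1$. Where you go off the rails is purely arithmetic: you computed $2\eu(N)+3\sign(N)=3\ell_2$, but connected sum is \emph{not} additive for $\eu$ in dimension~$4$; each $\#$ subtracts $2$ from $\eu$, hence $4$ from $2\eu+3\sign$. Writing $N$ as $S^4$ connect-summed with $\ell_1$ copies of $S^1\times S^3$ and $\ell_2$ copies of $\overline{\mathbb{CP}}{}^2$ gives
\[
2\eu(N)+3\sign(N)=4+\ell_1\cdot 0+\ell_2\cdot 3-4(\ell_1+\ell_2)=4-4\ell_1-\ell_2,
\]
which is precisely what the paper records. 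Plugging this into the left side of \eqref{asm} with $n=j+1$ yields
\[
4(j+1)-\bigl(4-4\ell_1-\ell_2\bigr)=4j+4\ell_1+\ell_2,
\]
and for $j=2$ this is exactly $4(2+\ell_1)+\ell_2$. (The paper's proof explicitly takes $n=3$; the case $j=1$ gives $4(1+\ell_1)+\ell_2$, consistent with the form $4(j+\ell_1)+\ell_2$ used downstream in Proposition~\ref{prop-FZZ-HTin}.) Your attempted repair of absorbing the $S^1\times S^3$ factors into the BF-admissible side cannot work, since $S^1\times S^3$ has $b^+=0$ and is not BF-admissible, and in any case Theorem~\ref{Ricci-non-sin} restricts to $n\le 3$. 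Once the Euler-characteristic bookkeeping is fixed, no further ideas are needed.
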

\begin{proof}
Theorem \ref{main-A} particularly tells us that $\Sigma_{g} \times \Sigma_{h}$ is BF-admissible. Notice also that we have $2\eu(N)+3\sign(N) = 4-4{\ell}_{1} - {\ell}_{1}$ by setting as $N := \ell_{1}({S}^{1} \times {S}^{3}) \# \ell_{2} \overline{{\mathbb C}{P}^{2}}$. By taking $n=3$ in the inequality (\ref{asm}), we have the desired result. 
\end{proof}

\subsection{Proof of Theorem \ref{main-AcC}}\label{sub-53}

In this section, we shall prove Theorem \ref{main-AcC} stated in Introduction. \par
For the reader, let us recall the definition of simplicial volume due to Gromov \cite{gromov}. Let $M$ be a closed manifold. We denote by ${C}_{*}(M):=\sum^{\infty}_{k=0}{C}_{k}(M)$ the real coefficient singular chain complex of $M$. A chain $c \in {C}_{k}(M)$ is a finite combination $\sum{r}_{i}{\sigma}_{i}$ of singular simplexes ${\sigma}_{i} : {\Delta}^k \rightarrow M$ with real coefficients ${r}_{i}$. We define the norm $|c|$ of $c$ by $|c| : = \sum|r_{i}| \geq 0$. If $[\eta] \in H_{*}(M, {\mathbb R})$ is any homology class, then the norm $||\eta||$ of $[\eta]$ is define as 
\begin{eqnarray*}
||\eta||:=\inf \{|\frak a| : [{\frak a}] \in H_{*}(M, {\mathbb R}), [\frak a]=[\eta]\}, 
\end{eqnarray*}
where the infimum is taken over all cycles representing $\eta$. 
Suppose that $M$ is moreover oriented. Then we have the fundamental class $[M] \in H_{n}(M, {\mathbb R})$ of $M$. We then define the {simplicial volume} of $M$ by $||M|| \in [0, \infty)$. It is known that any simply connected manifold $M$ satisfies $||M||=0$. \par
First of all, we need to prove the following lemma. 
\begin{lem}\label{simplicial-lem}
Let $X_{m}$ be a closed $4$-manifold and consider a connected sum:
\begin{eqnarray*}
M:=(\#^{j}_{m=1} X_{m}) \# k (\Sigma_{h} \times \Sigma_{g}) \# \ell_{1}({S}^{1} \times {S}^{3}) \# \ell_{2} \overline{{\mathbb C}{P}^{2}}, 
\end{eqnarray*}
where $g, h \geq 1$, $j, k \geq 1$ and $\ell_{1}, \ell_{2} \geq 0$. Then the simplicial volume of $M$ is given by
\begin{eqnarray}\label{simplical-conn}
||M|| = 24k(g-1)(h-1) + \sum^{j}_{m=1}||X_{m}||.    
\end{eqnarray}
On the other hand, we have
\begin{eqnarray*}
2\eu(M)+3\sign(M) &=& \Big(\sum^{j}_{m=1} 2\eu(X_{m})+3\sign(X_{m}) \Big)+4k(g-1)(h-1)\\
&-&4(j+k-1+\ell_{1})-{\ell}_{2}, \\
2\eu(M)-3\sign(M) &=& \Big( \sum^{j}_{m=1} 2\eu(X_{m})-3\sign(X_{m}) \Big)+4k(g-1)(h-1) \\
                  &-& 4(j+k-1+\ell_{1})+5{\ell}_{2}.
\end{eqnarray*}

\end{lem}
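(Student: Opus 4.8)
The plan is to establish Lemma~\ref{simplicial-lem} by treating the simplicial volume formula and the characteristic number formulas separately, each being a routine application of standard additivity properties under connected sum combined with known values for the individual building blocks.

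\textbf{Step 1: The simplicial volume formula.} First I would invoke Gromov's additivity theorem for the simplicial volume under connected sums in dimension $\geq 3$: for closed oriented $n$-manifolds with $n \geq 3$, $||M_1 \# M_2|| = ||M_1|| + ||M_2||$. This reduces the computation to knowing the simplicial volumes of the summands. For $S^1 \times S^3$ and $\overline{\mathbb{C}\mathbb{P}}{}^2$ the simplicial volume vanishes (these carry metrics of nonnegative sectional curvature, equivalently they admit self-maps of degree $>1$ up to the relevant arguments, or more simply $S^1 \times S^3$ has amenable fundamental group and $\overline{\mathbb{C}\mathbb{P}}{}^2$ is simply connected, so each has zero simplicial volume). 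For $\Sigma_h \times \Sigma_g$ with $g,h \geq 1$, I would use the product inequality together with Gromov's computation that $||\Sigma_g|| = 2|\chi(\Sigma_g)| = 4(g-1)$ for $g \geq 1$ (interpreted as $0$ when $g=1$), and the fact that for surfaces the simplicial volume of the product is exactly the product of the simplicial volumes up to the dimension-dependent constant. Concretely, the standard fact is $||\Sigma_h \times \Sigma_g|| = 24(g-1)(h-1)$ (this follows from proportionality: $\Sigma_h \times \Sigma_g$ with $g,h \geq 2$ is a locally symmetric/hyperbolic-times-hyperbolic space, and the middle-dimensional simplicial volume is proportional to the volume; the constant $24$ for the $4$-dimensional product is the one recorded in the literature, e.g.\ work of Bucher-Karlsson). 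Summing over the $k$ copies and the $j$ manifolds $X_m$ and using vanishing of the remaining summands gives exactly (\ref{simplical-conn}).

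\textbf{Step 2: The characteristic number formulas.} Here I would just use additivity of Euler characteristic and signature under connected sum of $4$-manifolds: $\chi(A \# B) = \chi(A) + \chi(B) - 2$ and $\sign(A \# B) = \sign(A) + \sign(B)$. For a connected sum of $r$ pieces this gives $\chi(M) = \sum \chi(\text{pieces}) - 2(r-1)$ and $\sign$ additive. Then plug in: $\chi(\Sigma_h \times \Sigma_g) = \chi(\Sigma_h)\chi(\Sigma_g) = 4(g-1)(h-1)$ and $\sign(\Sigma_h \times \Sigma_g) = 0$; $\chi(S^1 \times S^3) = 0$, $\sign(S^1 \times S^3) = 0$; $\chi(\overline{\mathbb{C}\mathbb{P}}{}^2) = 3$, $\sign(\overline{\mathbb{C}\mathbb{P}}{}^2) = -1$. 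The total number of connected-sum pieces is $j + k + \ell_1 + \ell_2$, so $r - 1 = j + k + \ell_1 + \ell_2 - 1$. Assembling $2\chi(M) + 3\sign(M)$ and $2\chi(M) - 3\sign(M)$ and bookkeeping the $-4(r-1)$ term — noting that the $\ell_2$ copies of $\overline{\mathbb{C}\mathbb{P}}{}^2$ contribute $\ell_2(2 \cdot 3 + 3 \cdot (-1)) = 3\ell_2$ to $2\chi + 3\sign$ but also contribute $-4\ell_2$ through the connected-sum correction, netting $-\ell_2$; whereas for $2\chi - 3\sign$ they contribute $\ell_2(6 + 3) = 9\ell_2$ minus $4\ell_2 = 5\ell_2$ — yields the two displayed identities after collecting terms, with the $\ell_1$ summands contributing only through the $-4$ correction and the $\Sigma_h \times \Sigma_g$ summands contributing $4k(g-1)(h-1)$ from the Euler characteristic term together with a $-4k$ from the correction.

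\textbf{Main obstacle.} The characteristic-number part is entirely mechanical arithmetic, so the only genuine content is in Step~1, and specifically in pinning down the constant $24$ in $||\Sigma_h \times \Sigma_g|| = 24(g-1)(h-1)$ and handling the degenerate cases $g=1$ or $h=1$ (where the product contains a torus factor, simplicial volume is zero, and the formula correctly returns $0$). I expect the subtlety to be that this exact value is \emph{not} elementary — it rests on the Bucher-Karlsson computation of the simplicial volume of products of surfaces, which combines Gromov's proportionality principle with a sharp evaluation of the norm of the relative fundamental cocycle. The safest route in the writeup is to cite that computation directly rather than reprove it, and then the rest of the lemma follows by the additivity facts above; I would double-check the $g=h=1$ and "$g=1$, $h\ge 2$" boundary cases against the stated formula to confirm consistency, and likewise verify that the characteristic-number identities reduce correctly when some of $j,k,\ell_1,\ell_2$ take their minimal allowed values.
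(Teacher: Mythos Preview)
Your proposal is correct and follows essentially the same approach as the paper: Gromov's additivity for simplicial volume under connected sum, vanishing of $\|S^1\times S^3\|$ and $\|\overline{\mathbb{CP}}{}^2\|$, citation of Bucher-Karlsson for $\|\Sigma_h\times\Sigma_g\|=24(g-1)(h-1)$, and direct computation for the characteristic-number identities. Your write-up is in fact more detailed than the paper's, which simply cites the relevant facts and leaves the $2\chi\pm 3\sigma$ formulas to the reader.
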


\begin{proof}
It is known that the simplicial volume of the connected sum satisfies the following formula \cite{gromov, be}:
\begin{eqnarray*}
||M_{1} \# M_{2}|| = ||M_{1}|| + ||M_{2}||
\end{eqnarray*}
Since it is known that $||{S}^{1} \times {S}^{3} ||=0$ and $||\overline{{\mathbb C}{P}^{2}}||=0$ hold, the above formula tells us that 
\begin{eqnarray*}
||M|| = k||\Sigma_{h} \times \Sigma_{g} || + \sum^{j}_{m=1}||X_{m}||.    
\end{eqnarray*}
On the other hand, the following result is proved in \cite{Bucher-Karlsson(2007)}:
\begin{eqnarray*}
||\Sigma_{h} \times \Sigma_{g}|| =  24(g-1)(h-1).  
\end{eqnarray*}
Therefore, we have the formula (\ref{simplical-conn}). One can also deduce the formulas on $2\eu(M)+3\sign(M)$ and $2\eu(M)-3\sign(M)$ by direct computations. 
\end{proof}

We also have
\begin{lem}\label{cosimplicial-lem-2}
Let $X_{m}$ be a closed oriented smooth $4$-manifold and consider the following connected sum:
\begin{eqnarray*}
M:=(\#^{j}_{m=1}X_{m}) \# (\Sigma_{h} \times \Sigma_{g}) \# \ell_{1}({S}^{1} \times {S}^{3}) \# \ell_{2} \overline{{\mathbb C}{P}^{2}}, 
\end{eqnarray*}
where $j = 1,2$. For any pair $(g,h)$ of positive integers $\geq 2$, define the following positive number: 
\begin{eqnarray}\label{kappa-1}
{\kappa}(g,h):=  4(1-h)(1-g) - \frac{24(1-h)(1-g)}{1295{\pi}^2} > 0. 
\end{eqnarray}
Then, there are infinitely many sufficiently large integers $g, h, {\ell}_{1}, {\ell}_{2}$ for which  the following three conditions are satisfied simultaneously: 
\begin{eqnarray}\label{connected-condition-2}
\sum^{j}_{m=1}\Big( 2\eu(X_{m}) - 3\sign(X_{m}) \Big)   >  - {\kappa}(g,h) + \frac{ ||X ||}{1295{\pi}^2} + 4(j + {\ell}_{1}) -5 \ell_{2},  
\end{eqnarray}
\begin{eqnarray}\label{connected-condition-3}
\sum^{j}_{m=1}\Big( 2\eu(X_{m}) + 3\sign(X_{m}) \Big) > - {\kappa}(g,h) + \frac{ ||X ||}{1295{\pi}^2} + 4(j + {\ell}_{1}) + \ell_{2},  
\end{eqnarray}
\begin{eqnarray}\label{connected-condition-4}
4(j+\ell_{1}) + \ell_{2} > \frac{1}{3}\Big(\sum^{j}_{m=1}\Big( 2\eu(X_{m})+ 3\sign(X_{m}) \Big) +4(1-h)(1-g) \Big),  
\end{eqnarray}
where set as $|| X || := \sum^{j}_{m}|| X_{m} || \in [0, \infty)$. 
\end{lem}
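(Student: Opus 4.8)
\textbf{Proof plan for Lemma \ref{cosimplicial-lem-2}.}

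The plan is to treat the three conditions \eqref{connected-condition-2}, \eqref{connected-condition-3}, \eqref{connected-condition-4} as inequalities in the four integer parameters $g,h,\ell_1,\ell_2$, after noting that the quantities depending on the fixed summands $X_m$ (namely $\sum_m 2\eu(X_m)\pm 3\sign(X_m)$ and $\|X\|=\sum_m\|X_m\|$) are \emph{constants}, and that $j\in\{1,2\}$ is also fixed. I would first isolate the sign structure of each inequality in terms of the growth of $g$ and $h$: the key observation is that $\kappa(g,h)$ defined in \eqref{kappa-1} is, up to the positive constant factor $\bigl(4 - \tfrac{24}{1295\pi^2}\bigr)$, equal to $(h-1)(g-1)$, hence $\kappa(g,h)\to+\infty$ as $g,h\to\infty$. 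Therefore the right-hand sides of \eqref{connected-condition-2} and \eqref{connected-condition-3} both have the form $-\kappa(g,h) + (\text{bounded terms in }\ell_1,\ell_2) + (\text{constant})$, which tends to $-\infty$ once we can keep the $\ell_1,\ell_2$ contributions under control relative to $\kappa(g,h)$. So the first two conditions will be satisfied for \emph{all} sufficiently large $g,h$ provided $\ell_1,\ell_2$ are not chosen to grow too fast compared to $(g-1)(h-1)$.

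Next I would analyze \eqref{connected-condition-4}, which is the genuinely restrictive one since it asks $4(j+\ell_1)+\ell_2$ to be \emph{large}: rearranging, it reads
\begin{eqnarray*}
4(j+\ell_1)+\ell_2 > \frac{1}{3}\Bigl(C_0 + 4(1-h)(1-g)\Bigr),
\end{eqnarray*}
where $C_0 := \sum_{m=1}^j\bigl(2\eu(X_m)+3\sign(X_m)\bigr)$ is a fixed constant. Since $(1-h)(1-g)=(h-1)(g-1)$ grows, this forces $\ell_1$ or $\ell_2$ to grow at least like $\tfrac{4}{3}(h-1)(g-1)$ — but crucially only on the order of a \emph{fixed multiple} of $(h-1)(g-1)$, not faster. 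So the strategy is: fix a convenient relation, e.g. set $\ell_2 = 1$ (or any fixed value $\geq 1$) and choose $\ell_1$ to be the smallest integer exceeding $\tfrac{1}{12}\bigl(C_0+4(h-1)(g-1)\bigr) - j$, so that \eqref{connected-condition-4} holds with room to spare. Then $\ell_1 \asymp \tfrac13(h-1)(g-1)$, so $4\ell_1 \asymp \tfrac43(h-1)(g-1)$, while $\kappa(g,h)\asymp \bigl(4-\tfrac{24}{1295\pi^2}\bigr)(h-1)(g-1)$. Since $4 - \tfrac{24}{1295\pi^2} > \tfrac43$ — this is the numerical crux, and it holds comfortably because $\tfrac{24}{1295\pi^2}$ is tiny — the $-\kappa(g,h)$ term dominates $4\ell_1$ in \eqref{connected-condition-2} and \eqref{connected-condition-3}, so their right-hand sides still go to $-\infty$. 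Hence for all sufficiently large $g,h$ (with $\ell_1$ chosen as above and $\ell_2$ fixed), all three inequalities hold simultaneously, and since we may increase $g,h$ along an infinite sequence (say $g=h=2n+1$, $n\to\infty$, to keep them odd as needed downstream), we obtain infinitely many admissible tuples.

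The main obstacle — and really the only nontrivial point — is verifying that the coefficient comparison $4 - \tfrac{24}{1295\pi^2} > \tfrac{4}{3}$ (equivalently $\tfrac{24}{1295\pi^2} < \tfrac{8}{3}$, which is wildly true) leaves enough slack to absorb the lower-order constant terms $C_0$, $\|X\|/(1295\pi^2)$, $4j$, $\pm5\ell_2$, and the rounding in the choice of $\ell_1$; this is a routine "pick $g,h$ large enough" argument once the leading-order behavior is pinned down. I would also double-check the bookkeeping that $\kappa(g,h)$ genuinely equals $\bigl(4 - \tfrac{24}{1295\pi^2}\bigr)(h-1)(g-1)$ with the stated positivity, and that in \eqref{connected-condition-4} the term $4(1-h)(1-g)$ indeed appears with the sign making the right-hand side grow (it does, since $(1-h)(1-g)>0$ for $g,h\geq2$). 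No deep input is needed beyond these elementary estimates; the lemma is purely a matter of arranging the parameters so that the dominant growth term $-\kappa(g,h)$ overwhelms the competing linear-in-$\ell_1$ term, which the explicit constant $1295\pi^2$ makes possible with enormous margin.
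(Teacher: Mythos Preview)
Your proposal is correct and follows essentially the same strategy as the paper: both arguments reduce to the growth-rate comparison between $\kappa(g,h)\sim\bigl(4-\tfrac{24}{1295\pi^2}\bigr)(g-1)(h-1)$ and the lower bound $\sim\tfrac{4}{3}(g-1)(h-1)$ forced by \eqref{connected-condition-4}, invoking the same numerical fact $\tfrac{8}{3}>\tfrac{24}{1295\pi^2}$. The paper packages \eqref{connected-condition-3} and \eqref{connected-condition-4} as a sandwich $A>4(j+\ell_1)+\ell_2>B$ with $A-B\to\infty$ (and then lets $\ell_2$ be large to secure \eqref{connected-condition-2}), whereas you fix $\ell_2$ and let $\ell_1$ alone carry \eqref{connected-condition-4}; this is a minor bookkeeping difference and not a genuinely different route.
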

\begin{proof}
First of all, notice that the inequality (\ref{connected-condition-2}) is always satisfied by taking sufficiently large $\ell_{2}$ for any fixed ${\ell}_{1}, g, h$. On the other hand, the inequality (\ref{connected-condition-3}) is equivalent to 
\begin{eqnarray}\label{2-connected-condition-3}
\sum^{j}_{m=1}\Big( 2\eu(X_{m}) + 3\sign(X_{m}) \Big) + {\kappa}(g,h) - \frac{ ||X ||}{1295{\pi}^2} > 4(j + {\ell}_{1}) + \ell_{2}. 
\end{eqnarray}
Therefore, by (\ref{connected-condition-4}) and (\ref{2-connected-condition-3}), it is enough to prove that there exist infinitely many sufficiently large positive integers ${\ell}_{1}, {\ell}_{2}, g, h$ satisfying
\begin{eqnarray}\label{3-connected-condition-3}
 c_{j}+ {\kappa}(g,h) - \frac{ ||X ||}{1295{\pi}^2} > 4(j + {\ell}_{1}) + \ell_{2} > \frac{1}{3}\Big(c_{j} + 4(1-h)(1-g) \Big). 
\end{eqnarray}
where $c_{j}:= \sum^{j}_{m=1}\Big( 2\eu(X_{m}) + 3\sign(X_{m}) \Big)$. We set as$$
A := c_{j}+ {\kappa}(g,h) - \frac{ ||X ||}{1295{\pi}^2}, \ B:= \frac{1}{3}\Big(c_{j} + 4(1-h)(1-g) \Big), 
$$
namely, (\ref{3-connected-condition-3}) is nothing but $A > 4(2 + {\ell}_{1}) + \ell_{2} > B$. Notice that both $A$ and $B$ can become sufficiently large positive integers by taking sufficiently large $g$ or $h$. We also have 
\begin{eqnarray*}
A - B = \frac{2}{3}{c}_{j} + \Big( \frac{8}{3} - \frac{24}{1295{\pi}^2} \Big)(1-h)(1-g) - \frac{ ||X ||}{1295{\pi}^2}. 
\end{eqnarray*}
From this, we see that $A - B$ can become a large positive integer by taking large $g$ or $h$. Since there are infinitely many choices of such $g$ and $h$, we are able to conclude that there are also infinitely many $\ell_{1}, \ell_{2}$ satisfying $A > 4(2 + {\ell}_{1}) + \ell_{2} > B$. By taking sufficiently large $g$ or $h$, we are also able to find a sufficiently large ${\ell}_{2}$ satisfying the inequality (\ref{connected-condition-2}), where notice that ${\kappa}(g,h) > 0$ and also that we can take as $ 4(j + {\ell}_{1}) -5 \ell_{2} < 0$.  
\end{proof}

Lemma \ref{simplicial-lem} and Lemma \ref{cosimplicial-lem-2} imply 
\begin{prop}\label{prop-FZZ-HTin}
Let $X_{m}$ be a closed oriented smooth $4$-manifold and consider the a connected sum:
\begin{eqnarray*}
M:=(\#^{j}_{m=1}X_{m}) \# (\Sigma_{h} \times \Sigma_{g}) \# \ell_{1}({S}^{1} \times {S}^{3}) \# \ell_{2} \overline{{\mathbb C}{P}^{2}}, 
\end{eqnarray*}
where $j = 1, 2$. Then, there are infinitely many sufficiently large integers $g, h, {\ell}_{1}, \ell_{2}$ for which  the following two conditions are satisfies simultaneously: 
\begin{eqnarray}\label{Hit-Tho-1}
2\eu(M) - 3|\sign(X)| > \frac{|| M ||}{1295{\pi}^2},  
\end{eqnarray}
\begin{eqnarray}\label{Hit-Tho-2}
4(j+\ell_{1}) + \ell_{2} > \frac{1}{3}\Big(\sum^{j}_{m=1}\Big( 2\eu(X_{m})+ 3\sign(X_{m}) \Big) +4(1-h)(1-g) \Big).  
\end{eqnarray}
\end{prop}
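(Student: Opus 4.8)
The plan is to reduce Proposition \ref{prop-FZZ-HTin} to Lemma \ref{cosimplicial-lem-2} by rewriting the two displayed conditions (\ref{Hit-Tho-1}) and (\ref{Hit-Tho-2}) purely in terms of the Euler characteristics, signatures and simplicial volumes of the summands $X_m$, using the formulas collected in Lemma \ref{simplicial-lem} specialized to $k=1$. The first step is to dispose of the absolute value in (\ref{Hit-Tho-1}): the inequality $2\eu(M)-3|\sign(M)| > \frac{||M||}{1295\pi^2}$ holds if and only if \emph{both} $2\eu(M)-3\sign(M) > \frac{||M||}{1295\pi^2}$ and $2\eu(M)+3\sign(M) > \frac{||M||}{1295\pi^2}$ hold, so it suffices to arrange these two inequalities, together with (\ref{Hit-Tho-2}), simultaneously.

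Next I would substitute into these three inequalities. Lemma \ref{simplicial-lem} with $k=1$ gives $||M|| = 24(g-1)(h-1) + ||X||$ together with the two expressions for $2\eu(M)\pm 3\sign(M)$ in terms of $\sum_m\bigl(2\eu(X_m)\pm 3\sign(X_m)\bigr)$, $(g-1)(h-1)$, $j$, $\ell_1$ and $\ell_2$ (here one must be careful to set $k=1$ in the ``$j+k-1$'' term, which then reads simply $j$). Plugging these in and collecting all the terms that do not involve the $X_m$-data on the right-hand side, each right-hand side will contain the combination $-4(g-1)(h-1) + \frac{24(g-1)(h-1)}{1295\pi^2}$; by the very definition (\ref{kappa-1}) of $\kappa(g,h)$, and using $(g-1)(h-1) = (1-g)(1-h)$, this is exactly $-\kappa(g,h)$.

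A direct check then shows that the inequality arising from $2\eu(M)+3\sign(M) > \frac{||M||}{1295\pi^2}$ is precisely (\ref{connected-condition-3}), the one arising from $2\eu(M)-3\sign(M) > \frac{||M||}{1295\pi^2}$ is precisely (\ref{connected-condition-2}), and (\ref{Hit-Tho-2}) is literally (\ref{connected-condition-4}). Finally, Lemma \ref{cosimplicial-lem-2} asserts the existence of infinitely many sufficiently large integers $g,h,\ell_1,\ell_2$ for which (\ref{connected-condition-2}), (\ref{connected-condition-3}) and (\ref{connected-condition-4}) all hold; by the equivalences just established, the same tuples satisfy (\ref{Hit-Tho-1}) and (\ref{Hit-Tho-2}), which is the claim.

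There is essentially no analytic obstacle here — the content is entirely in Lemmas \ref{simplicial-lem} and \ref{cosimplicial-lem-2}. The only place demanding care is the bookkeeping: correctly specializing the ``$j+k-1+\ell_1$'' term of Lemma \ref{simplicial-lem} to $k=1$, handling the case split forced by $|\sign(M)|$, and verifying that the constant $1295\pi^2$ combines so that the spurious terms collapse exactly onto $-\kappa(g,h)$. Once these are checked, the proposition is immediate.
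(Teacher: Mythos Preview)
Your proposal is correct and follows essentially the same approach as the paper: reduce (\ref{Hit-Tho-1}) to the pair of inequalities $2\eu(M)\pm 3\sign(M) > \frac{||M||}{1295\pi^2}$, use Lemma \ref{simplicial-lem} with $k=1$ to rewrite everything in terms of the summand data, identify the resulting three inequalities with (\ref{connected-condition-2}), (\ref{connected-condition-3}), (\ref{connected-condition-4}), and then invoke Lemma \ref{cosimplicial-lem-2}. Your remarks on the bookkeeping (the $j+k-1$ specialization and the appearance of $-\kappa(g,h)$) are exactly the points the paper's computation implicitly checks.
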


\begin{proof}
Notice that (\ref{Hit-Tho-2}) is nothing but (\ref{connected-condition-4}). On the other hand, by Lemma \ref{simplicial-lem}, we have 
\begin{eqnarray*}
2\eu(M)+3\sign(M) &=& \Big(\sum^{j}_{m=1} 2\eu(X_{m})+3\sign(X_{m}) \Big)+4(g-1)(h-1)\\
&-&4(j+\ell_{1})-{\ell}_{2}. 
\end{eqnarray*}
By (\ref{simplical-conn}), we also obtain 
\begin{eqnarray*}
\frac{||M||}{1295{\pi}^{2}} = \frac{24}{1295{\pi}^{2}}(g-1)(h-1) + \frac{1}{1295{\pi}^{2}} \sum^{j}_{m=1}||X_{m}||.    
\end{eqnarray*}
Therefore, the inequality (\ref{connected-condition-3}) is nothing but 
\begin{eqnarray}\label{Hit-Tho-11}
2\eu(M) + 3\sign(X) > \frac{|| M ||}{1295{\pi}^2}. 
\end{eqnarray}
Similarly, since Lemma \ref{simplicial-lem} also tells us that 
\begin{eqnarray*}
2\eu(M)-3\sign(M) &=& \Big( \sum^{j}_{m=1} 2\eu(X_{m})-3\sign(X_{m}) \Big)+4(g-1)(h-1)\\
&-&4(j+\ell_{1})+5{\ell}_{2}, 
\end{eqnarray*}
the inequality (\ref{connected-condition-2}) is nothing but 
\begin{eqnarray}\label{Hit-Tho-12}
2\eu(M) - 3\sign(X) > \frac{|| M ||}{1295{\pi}^2}. 
\end{eqnarray}
By (\ref{Hit-Tho-11}) and (\ref{Hit-Tho-12}), we obtain (\ref{Hit-Tho-1}) as desired. 
\end{proof}

We also have 
\begin{cor}\label{cor-perel}
Let $X_{m}$ be a BF-admissible closed oriented smooth $4$-manifold and consider the a connected sum:
\begin{eqnarray*}
M:=(\#^{j}_{m=1}X_{m}) \# (\Sigma_{h} \times \Sigma_{g}) \# \ell_{1}({S}^{1} \times {S}^{3}) \# \ell_{2} \overline{{\mathbb C}{P}^{2}}, 
\end{eqnarray*}
where $j = 1, 2$, ${\ell}_{1}, {\ell}_{2} \geq 1$ and $g, h \geq 3$ are odd integers such that 
\begin{eqnarray}\label{negative-main-condition}
c^{j}_{g,h}:=\sum^{j}_{m=1}\Big( 2\eu(X_{m})+ 3\sign(X_{m}) \Big) +4(1-h)(1-g) > 0  
\end{eqnarray}
Then, for any real number $k \geq \frac{2}{3}$, $\bar{\lambda}_{k}$ invariant of the connected sum $M$ is given by 
\begin{eqnarray}\label{bound-c-p}
\bar{\lambda}_{k}(M) \leq {-4k{\pi}}\sqrt{2c^{j}_{g,h}} < 0.
\end{eqnarray}

\end{cor}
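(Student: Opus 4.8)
The plan is to recognize $M$ as one of the connected sums covered by Theorem \ref{main-CCC}, by absorbing the product surface $\Sigma_h \times \Sigma_g$ as an extra BF-admissible summand and collecting the remaining pieces into a four-manifold $N$ with vanishing $b^+$. The whole statement is then a bookkeeping corollary of Theorem \ref{main-CCC}.

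First I would set $X_{j+1} := \Sigma_h \times \Sigma_g$, equipped with its product symplectic form. Because $g$ and $h$ are odd with $g, h \geq 3$, Theorem \ref{main-A} guarantees that $X_{j+1}$ is BF-admissible in the sense of Definition \ref{BF-adm-def}; combined with the hypothesis that $X_1, \dots, X_j$ are BF-admissible, this produces $n := j+1 \in \{2,3\}$ BF-admissible summands, which is exactly the range to which Theorem \ref{main-CCC} applies. Next I would put $N := \ell_1 ({S}^{1} \times {S}^{3}) \# \ell_2 \overline{{\mathbb C}{P}^{2}}$, so that $M = (\#^{n}_{m=1} X_m) \# N$; since ${S}^{1} \times {S}^{3}$ has vanishing $b^+$ and $\overline{{\mathbb C}{P}^{2}}$ is negative definite, we have $b^+(N) = 0$, as required.

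It then remains to identify the Chern-number sum occurring in Theorem \ref{main-CCC}. Since $c_1^2(X_m) = 2\eu(X_m) + 3\sign(X_m)$ for $1 \le m \le j$ and $\sign(\Sigma_h \times \Sigma_g) = 0$, an elementary computation of $\eu$ and $\sign$ of the connected sum (the same bookkeeping recorded in Lemma \ref{simplicial-lem}) identifies $\sum_{m=1}^{n} c_1^2(X_m)$ with the quantity $c^{j}_{g,h}$ of (\ref{negative-main-condition}), which is strictly positive by hypothesis. Consequently, Theorem \ref{main-CCC}, applied to this $n$, these summands, this $N$, and any real number $k \geq \tfrac23$, yields
\[
\bar{\lambda}_k(M) \leq -4k\pi\sqrt{2\sum_{m=1}^{n} c_1^2(X_m)} = -4k\pi\sqrt{2\, c^{j}_{g,h}} < 0,
\]
which is precisely the bound (\ref{bound-c-p}).

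I do not expect a genuine obstacle here: the only points demanding care are the verification that $\Sigma_h \times \Sigma_g$ with $g,h$ odd satisfies all three conditions of Definition \ref{BF-adm-def} — which is exactly the content of Theorem \ref{main-A} and is where the oddness of the genera is used — and keeping the normalization of the Euler-characteristic and signature contributions to $c^{j}_{g,h}$ consistent with Lemma \ref{simplicial-lem}. As an alternative that avoids quoting Theorem \ref{main-CCC}, one can run the underlying argument directly: Theorem \ref{new-BF-non-vanishing} shows that $M$ carries a non-trivial stable cohomotopy Seiberg-Witten invariant, hence has non-zero monopole classes and admits no positive-scalar-curvature metric, so that $\mathcal{Y}(M) \le 0$ and $\mathcal{Y}(M) = -\inf_g \bigl(\int_M s_g^2\, d\mu_g\bigr)^{1/2}$; the curvature estimate (\ref{weyl-leb-sca-1}) then forces $\mathcal{Y}(M) \le -4\pi\sqrt{2\, c^{j}_{g,h}}$, and Proposition \ref{lambda-k-inv} converts this into the stated estimate for $\bar{\lambda}_k$ whenever $k \ge \tfrac23$.
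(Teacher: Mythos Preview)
Your proof is correct and follows essentially the same approach as the paper's own argument: the paper's proof simply observes that $\Sigma_h \times \Sigma_g$ is BF-admissible by Theorem~\ref{main-A} and then invokes Theorem~\ref{main-CCC}, which is precisely what you do, only with the bookkeeping (the choice of $N$, the verification $b^+(N)=0$, and the identification $\sum_{m=1}^n c_1^2(X_m)=c^j_{g,h}$) spelled out explicitly.
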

\begin{proof}
By Theorem \ref{main-A}, the product $\Sigma_{h} \times \Sigma_{g}$ of Riemann surface with odd genus is BF-admissible. Therefore, Theorem \ref{main-CCC} implies the desired bound (\ref{bound-c-p}). 
\end{proof}

We are now in a position to prove Theorem \ref{main-AcC} as follows. 
\begin{thm}\label{main-existence-thm}
Let $X_{m}$ be a BF-admissible closed oriented smooth $4$-manifold and consider the following connected sum:
\begin{eqnarray*}
M^{{\ell}_{1}, {\ell}_{2}}_{g,h,j}:=(\#^{j}_{m=1}X_{m}) \# (\Sigma_{h} \times \Sigma_{g}) \# \ell_{1}({S}^{1} \times {S}^{3}) \# \ell_{2} \overline{{\mathbb C}{P}^{2}}, 
\end{eqnarray*}
where $j = 1, 2$, ${\ell}_{1}, {\ell}_{2} \geq 1$ and $g, h \geq 3$ are odd integers. Then, there are infinitely many sufficiently large integers $g, h, {\ell}_{1}, \ell_{2}$ for which $M$ has property $\mathcal R$. 
\end{thm}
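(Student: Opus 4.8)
The plan is to exhibit $M:=M^{\ell_1,\ell_2}_{g,h,j}$ as a connected sum of BF-admissible $4$-manifolds together with a summand of vanishing $b^+$, so that the machinery of this section applies verbatim, and then to verify the two clauses of property $\mathcal R$ (Definition \ref{def-property}) one at a time by quoting the inequalities already packaged in Proposition \ref{prop-FZZ-HTin} and Corollaries \ref{cor-perel} and \ref{speical-ein}. The genuine mathematical content is all upstream; the present proof is an assembly.

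The first step is bookkeeping. Since $g$ and $h$ are odd and $\geq 3$, Theorem \ref{main-A} guarantees that $\Sigma_h\times\Sigma_g$ is BF-admissible; grouping it with the $X_m$ and writing $n:=j+1\in\{2,3\}$ and $N:=\ell_1(S^1\times S^3)\#\ell_2\CPb$, I would record $M=\big(\#_{m=1}^{n}Y_m\big)\#N$ with $Y_1,\dots,Y_n$ BF-admissible and $b^+(N)=0$, which is exactly the shape required by Theorems \ref{main-CCC} and \ref{Ricci-non-sin}. One checks $c_1^2(\Sigma_h\times\Sigma_g)=2\eu(\Sigma_h\times\Sigma_g)=8(g-1)(h-1)>0$, so that $\sum_{m=1}^{n}c_1^2(Y_m)>0$, and likewise the number $c^{j}_{g,h}$ of (\ref{negative-main-condition}) is positive once $g,h$ are large; these positivity conditions are precisely what Corollaries \ref{cor-perel} and \ref{speical-ein} take as hypotheses.

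Next I would invoke Proposition \ref{prop-FZZ-HTin} to fix infinitely many sufficiently large odd integers $g,h\geq 3$ and integers $\ell_1,\ell_2\geq 1$ for which the strict Gromov--Hitchin--Thorpe inequality (\ref{Hit-Tho-1}) and the companion inequality (\ref{Hit-Tho-2}) hold at once. For any such parameters, clause (1) of property $\mathcal R$ holds since $||M||=24(g-1)(h-1)+\sum_{m=1}^{j}||X_m||>0$ by Lemma \ref{simplicial-lem} (taken with $k=1$) and the strict case of (\ref{gh-T}) is exactly (\ref{Hit-Tho-1}); and clause (2) holds for the smooth structure carried by the connected-sum decomposition above, because Corollary \ref{cor-perel} with $k=1$ yields $\bar{\lambda}(M)=\bar{\lambda}_1(M)\leq -4\pi\sqrt{2\,c^{j}_{g,h}}<0$, while Corollary \ref{speical-ein}, whose hypothesis is precisely (\ref{Hit-Tho-2}), rules out a quasi-non-singular solution to the normalized Ricci flow on $M$ for every initial metric. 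Hence $M$ has property $\mathcal R$ for these infinitely many $(g,h,\ell_1,\ell_2)$.

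The step I expect to require the most attention is not in this final assembly but in what makes Proposition \ref{prop-FZZ-HTin} available, namely the arithmetic of Lemma \ref{cosimplicial-lem-2}: the strict Gromov--Hitchin--Thorpe inequality and the Ricci-flow obstruction inequality (\ref{Hit-Tho-2}) pull in opposite directions, so one must use the freedom to enlarge $g,h$ — which inflates $(g-1)(h-1)$ on both sides of each inequality — and $\ell_2$ — which raises $2\eu(M)\mp 3\sign(M)$ without altering $||M||$ — in the coordinated way prescribed there, all the while keeping $c^{j}_{g,h}>0$ and $\sum_m c_1^2(Y_m)>0$ intact so that Corollaries \ref{cor-perel} and \ref{speical-ein} remain applicable. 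Once that simultaneous choice is secured, the conclusion is immediate.
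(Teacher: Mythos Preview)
Your proposal is correct and follows essentially the same route as the paper's own proof: both assemble the conclusion from Lemma \ref{simplicial-lem} (for $\|M\|\neq 0$), Proposition \ref{prop-FZZ-HTin} (to secure infinitely many $g,h,\ell_1,\ell_2$ satisfying (\ref{Hit-Tho-1}) and (\ref{Hit-Tho-2}) simultaneously), Corollary \ref{cor-perel} (for $\bar\lambda(M)<0$ once $c^{j}_{g,h}>0$), and Corollary \ref{speical-ein} (for the non-existence of quasi-non-singular solutions under (\ref{Hit-Tho-2})). Your explicit bookkeeping step, recording $M=(\#_{m=1}^{n}Y_m)\#N$ with $n=j+1$ and $b^+(N)=0$, and your remark that the parity constraint on $g,h$ can be absorbed into the choices made in Lemma \ref{cosimplicial-lem-2}, are welcome clarifications but do not depart from the paper's argument.
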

\begin{proof}
By (\ref{simplical-conn}), we have 
\begin{eqnarray*}
||M|| = 24(g-1)(h-1) + \sum^{j}_{m=1}||X_{m}||.    
\end{eqnarray*}
Therefore we have $||M|| \not=0$ for any $g,h > 1$. On the other hand, by Corollary \ref{cor-perel}, under $c^{j}_{g,h}> 0$, we have $\bar{\lambda}(M) < 0$. Notice that $c^{j}_{g,h}> 0$ is always satisfied for sufficiently large $g, h > 1$ by (\ref{negative-main-condition}). Moreover, Proposition \ref{prop-FZZ-HTin} tells us that there are infinitely many sufficiently large integers $g, h, {\ell}_{1}, \ell_{2}$ for which (\ref{Hit-Tho-1}) and (\ref{Hit-Tho-2}) are satisfied simultaneously. By (\ref{Hit-Tho-1}), $M$ satisfies the strict case of the inequality (\ref{gh-T}). On the other hand, under (\ref{Hit-Tho-2}), i.e., 
\begin{eqnarray*}
4(j+\ell_{1}) + \ell_{2} > \frac{1}{3}\Big(\sum^{j}_{m=1}\Big( 2\eu(X_{m})+ 3\sign(X_{m}) \Big) +4(1-h)(1-g) \Big),  
\end{eqnarray*}
there is no quasi-non-singular solution to the normalized Ricci flow on $M$ for any initial metric by Corollary \ref{speical-ein}. Hence, the desired result follows. 
\end{proof}

\subsection{Proof of Theorem \ref{main-AcCC}}\label{sub-533}

Recall a construction of a certain sequence of homotopy $K3$ surfaces. See also \cite{BPV}. Let $Y_{0}$ be a Kummer surface with an elliptic fibration $Y_{0} \rightarrow {\mathbb C}{P}^{1}$. Let $Y_{\ell}$ be obtained from $Y_{0}$ by performing a logarithmic transformation of order $2 m + 1$ on a non-singular fiber of $Y_{0}$. Then, $Y_{m}$ are simply connected spin manifolds with $b^{+}(Y_{m}) = 3$ and $b^{-}(Y_{m}) = 19$. By the Freedman classification \cite{freedman}, $Y_{m}$ must be homeomorphic to a $K3$ surface. And $Y_{m}$ is a K{\"{a}}hler surface with $b^{+}(Y_{m}) > 1$ and hence a result of Witten \cite{w} tells us that $\pm {c}_{1}(Y_{m})$ are monopole classes of $Y_{m}$ for each $m$. Notice also that $Y_{m}$ is BF-admissible. By using $Y_{m}$, we are able to prove Theorem \ref{main-AcCC}:
\begin{thm}
Let $X$ be a BF-admissible closed oriented smooth $4$-manifold and consider the following connected sum:
\begin{eqnarray}\label{AcC-connected}
M^{{\ell}_{1}, {\ell}_{2}}_{g,h}:=X \# K3 \# (\Sigma_{h} \times \Sigma_{g}) \# \ell_{1}({S}^{1} \times {S}^{3}) \# \ell_{2} \overline{{\mathbb C}{P}^{2}}, 
\end{eqnarray}
where ${\ell}_{1}, {\ell}_{2} \geq 1$ and $g, h \geq 3$ are odd integers. Then, there are infinitely many sufficiently large integers $g, h, {\ell}_{1}, \ell_{2}$ for which $M^{{\ell}_{1}, {\ell}_{2}}_{g,h}$ has $\mathcal{R}-\infty$-property. 
\end{thm}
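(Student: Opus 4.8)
The plan is to reduce the statement to an application of Theorem \ref{main-AcC} (equivalently Theorem \ref{main-existence-thm}), since the manifold in \eqref{AcC-connected} is precisely a connected sum of the type treated there, once we absorb the $K3$ summand appropriately. The one genuinely new point is that we must produce \emph{infinitely many} smooth structures on $M^{\ell_1,\ell_2}_{g,h}$ for which $\bar\lambda<0$ and no quasi-non-singular solution to the normalized Ricci flow exists; this is where the sequence $Y_m$ of homotopy $K3$ surfaces enters.

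First I would set, for each $m\geq 0$,
\begin{eqnarray*}
M_m := X \# Y_m \# (\Sigma_h \times \Sigma_g) \# \ell_1(S^1 \times S^3) \# \ell_2 \overline{{\mathbb C}{P}^2}.
\end{eqnarray*}
Since each $Y_m$ is homeomorphic to $K3$ by Freedman's theorem, all the $M_m$ are homeomorphic to $M^{\ell_1,\ell_2}_{g,h}$; in particular their Euler characteristics, signatures, and Gromov simplicial volumes agree, so condition 1 in Definition \ref{def-property} (the strict Gromov--Hitchin--Thorpe inequality together with $\|M_m\|\neq 0$) holds for all $m$ simultaneously, precisely as established for $M^{\ell_1,\ell_2}_{g,h}$ in Proposition \ref{prop-FZZ-HTin} via Lemma \ref{simplicial-lem} and Lemma \ref{cosimplicial-lem-2}. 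So it remains to run the $\bar\lambda<0$ and no-quasi-non-singular-solution argument for each $M_m$, and then to argue that infinitely many of the $M_m$ are pairwise non-diffeomorphic.

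Next I would apply the machinery of Section \ref{sec-5} with $X_1 = X$ and $X_2 = Y_m$ (both BF-admissible: $X$ by hypothesis, $Y_m$ as noted after the construction) and with $N := (\Sigma_h\times\Sigma_g)\#\ell_1(S^1\times S^3)\#\ell_2\overline{{\mathbb C}{P}^2}$ --- but observe $b^+(N)\neq 0$, so instead one uses the grouping already built into Corollary \ref{cor-perel} and Corollary \ref{speical-ein}, treating $\Sigma_h\times\Sigma_g$ as one of the BF-admissible summands (it is, by Theorem \ref{main-A}) and $\ell_1(S^1\times S^3)\#\ell_2\overline{{\mathbb C}{P}^2}$ as the $b^+=0$ piece. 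Since $c^2_1(Y_m) = 2\eu(Y_m)+3\sign(Y_m) = 0$, the quantity $c^j_{g,h}$ controlling the bounds \eqref{bound-c-p} and \eqref{asm} is the same for every $M_m$, so the same choices of large $g,h,\ell_1,\ell_2$ furnished by Lemma \ref{cosimplicial-lem-2} make $\bar\lambda(M_m)<0$ (Corollary \ref{cor-perel}) and kill all quasi-non-singular solutions (Corollary \ref{speical-ein}, via \eqref{Hit-Tho-2}) uniformly in $m$. Thus every $M_m$ satisfies condition 2 of Definition \ref{def-property}.

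The main obstacle is showing that infinitely many $M_m$ are pairwise non-diffeomorphic, so that they genuinely furnish infinitely many distinct smooth structures and $M^{\ell_1,\ell_2}_{g,h}$ has the $\mathcal R{-}\infty$-property rather than merely property $\mathcal R$. Here I would invoke the standard device: the logarithmic transform of order $2m+1$ multiplies the set of Seiberg--Witten basic classes (hence the monopole classes, and their associated numerical invariants such as the maximal $c_1^2$ or the structure of the wall-crossing) in a way that distinguishes the $Y_m$ as smooth $4$-manifolds, and the connected-sum formula for stable cohomotopy Seiberg--Witten invariants (Theorem \ref{new-BF-non-vanishing}) together with the finiteness of the monopole class set (Proposition \ref{mono-bounded}) transfers this to $M_m$: different $m$ give connected sums whose monopole class sets, as computed from the Bauer--Furuta gluing, have different cardinalities or different associated maximal self-intersection, so only finitely many $M_m$ can be mutually diffeomorphic. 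Passing to an infinite subsequence and relabelling completes the proof. (This is exactly the argument pattern used in \cite{ishi-1} and \cite{ishi}, and I would cite it accordingly rather than reprove the diffeomorphism-invariance of the relevant count.)
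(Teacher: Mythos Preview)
Your approach is essentially the same as the paper's: replace $K3$ by the homotopy $K3$ surfaces $Y_m$, observe that each resulting $M_m$ is homeomorphic to $M^{\ell_1,\ell_2}_{g,h}$, apply Theorem \ref{main-existence-thm} (via Corollaries \ref{cor-perel} and \ref{speical-ein}) uniformly in $m$ since $c_1^2(Y_m)=0$, and then distinguish infinitely many smooth structures using monopole classes together with Proposition \ref{mono-bounded}.

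The only place your write-up is less sharp than the paper is the last step. Rather than comparing ``cardinalities or maximal self-intersection'' of monopole class sets, the paper argues by direct contradiction: if the sequence $\{M_m\}$ realized only finitely many diffeomorphism types, then some fixed smooth manifold would carry all of the monopole classes $\pm c_1(X)\pm c_1(Y_m)+\sum_i \pm E_i$ for infinitely many $m$; since $c_1(Y_m)$ is unbounded as $m\to\infty$, this violates the finiteness of $\mathfrak C$ in Proposition \ref{mono-bounded}. Your sketch contains exactly the ingredients for this, so tightening the wording to this unboundedness-vs-finiteness contradiction would make it match the paper precisely.
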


\begin{proof}
First of all, notice that $X$ has at least one monopole class $c_{1}(X)$ because $X$ is BF-admissible (see Definition \ref{BF-adm-def}) and hence $X$ has non-trivial stable cohomotopy Seiberg-Witten invariants. Then, consider the following connected sum which is homeomorphic to (\ref{AcC-connected}) for any $m$:
\begin{eqnarray*}
Z^{{\ell}_{1}, {\ell}_{2}}_{g, h} (m):= X \# Y_{m} \# (\Sigma_{h} \times \Sigma_{g}) \# \ell_{1}({S}^{1} \times {S}^{3}) \# \ell_{2} \overline{{\mathbb C}{P}^{2}}. 
\end{eqnarray*}
For each $g,h,{\ell}_{1}, {\ell}_{2}$, notice that the connected sum $Z^{{\ell}_{1}, {\ell}_{2}}_{g, h} (m)$ has non-trivial stable cohomotopy Seiberg-Witten invariants by Theorem \ref{new-BF-non-vanishing}. In particular, $X$ has monopole classes which are give by
\begin{eqnarray}\label{connected-f}
\pm {c}_{1}(X) \pm {c}_{1}(Y_{m}) + \sum^{b_{2}(N)}_{i=1} \pm{E}_{i},
\end{eqnarray}
where we set $N := \ell_{1}({S}^{1} \times {S}^{3}) \# \ell_{2} \overline{{\mathbb C}{P}^{2}}$ and $E_{1}, E_{2}, \cdots, E_{k}$ is a set of generators for $H^2(N, {\mathbb Z})$/torsion relative to which the intersection form is diagonal and the $\pm$ signs are arbitrary and independent of one another. \par
Then, for each $g,h,{\ell}_{1}, {\ell}_{2}$, we show that 
\begin{eqnarray}\label{infinite-diffeo}
\{Z^{{\ell}_{1}, {\ell}_{2}}_{g, h}(m) \}_{m \in {\mathbb N}}
\end{eqnarray}
contains infinitely many diffeo types. In fact, suppose that the sequence (\ref{infinite-diffeo}) contains only finitely many diffeomorphism types. Namely, suppose that there exists a positive integer $m_{0}$ such that $Z^{{\ell}_{1}, {\ell}_{2}}_{g, h}(m_{0})$ is diffeomorphic to $Z^{{\ell}_{1}, {\ell}_{2}}_{g, h}(m)$ for any integer ${m} \geq {m}_{0}$. Then, by taking ${m} \rightarrow \infty$, we see that the set of monopole classes of $4$-manifold $Z^{{\ell}_{1}, {\ell}_{2}}_{g, h}(m_{0})$ is unbounded by (\ref{connected-f}). However, this is a contradiction because the set of monopole classes of any given smooth $4$-manifold with $b^{+} > 1$ must be finite by Proposition \ref{mono-bounded}. Therefore, the sequence (\ref{infinite-diffeo}) must contain infinitely many diffeomorphism types. For each $m$, since there are infinitely many sufficiently large integers $g, h, {\ell}_{1}, \ell_{2}$ for which $Z^{{\ell}_{1}, {\ell}_{2}}_{g, h}(m)$ has property $\mathcal R$ by Theorem \ref{main-existence-thm}, we are able to conclude that there are infinitely many sufficiently large integers $g, h, {\ell}_{1}, \ell_{2}$ for which $M^{{\ell}_{1}, {\ell}_{2}}_{g,h}$ has $\mathcal{R}-\infty$-property as desired. 
\end{proof}

\subsection{Einstein case}\label{sub-55}

A Riemannian metric $g$ is called Einstein if its Ricci curvature, considered as a function on the unit tangent bundle, is constant. It is known that any closed oriented Einstein $4$-manifold $X$ satisfies
\begin{eqnarray}\label{HTI}
2\eu(X) - 3|\sign(X)| \geq 0. 
\end{eqnarray}
This inequality is called the Hitchin-Thorpe inequality \cite{hit, thor, be}. In particular, Hitchin \cite{hit} proved that any closed oriented Einstein $4$-manifold satisfying $2\eu(X) = 3|\sign(X)|$ is finitely covered by either $K3$ surface or the 4-torus. Gromov \cite{gromov} improved the Hitchin-Thorpe inequality (\ref{HTI}) by using simplical volume. In fact, Gromov proves that any closed oriented Einstein $4$-manifold $X$ must satisfy 
$$
2\eu(X) - 3|\sign(X)| \geq \frac{1}{1295{\pi}^2}||X||. 
$$
Notice that this is nothing but the inequality (\ref{gh-T}) in Conjecture \ref{conj-1}. It is also known that the simplicial volume term of this inequality can replace by a more larger term as follows: 
\begin{eqnarray}\label{GTHK-in}
2\eu(X) - 3|\sign(X)| \geq \frac{1}{81\pi^2}\|X\|. 
\end{eqnarray}
In \cite{ishi-1}, the first examples of $4$-manifolds satisfying the strict case of the inequality (\ref{GTHK-in}) but not admitting any Einstein metrics. The strategy of the proofs of Theorems \ref{main-AcC} and \ref{main-AcCC} can also be used to prove existence theorem of such $4$-manifolds without Einstein metrics. To state the results, let us introduce the following definition which is similar to Definition \ref{def-property}. 
\begin{defn}\label{def-property-E}
Let $X$ be a closed oriented topological $4$-manifold. We say $X$ has property $\mathcal E$ if $X$ satisfies the following properties. 
\begin{enumerate}
\item $X$ has  $||X|| \not=0$ and satisfies the strict case of the inequality (\ref{GTHK-in}):
\begin{eqnarray*}
2\eu(X) - 3|\sign(X)| > \frac{1}{81{\pi}^2}||X||. 
\end{eqnarray*}
\item $X$ admits at least one smooth structure for which no Einstein metric exists. 
\end{enumerate}
Similarly, we say $X$ has $\mathcal{E}-\infty$ property if $X$ satisfies the above condition 1 and moreover admits infinitely many smooth structures for which no Einstein metric exists. 
\end{defn}

The obstruction to the existence of Einstein metrics proved in \cite{ishi-1} (see Theorem I in \cite{ishi-1}) and the strategy similar with that of proofs of Lemma \ref{cosimplicial-lem-2} and Proposition \ref{prop-FZZ-HTin} immediately imply an Einstein version of Theorem \ref{main-AcC}, where notice that, instead of (\ref{kappa-1}), we need to consider the following quantity:
\begin{eqnarray*}
{\kappa}^{'}(g,h):=  4(1-h)(1-g) - \frac{24(1-h)(1-g)}{81{\pi}^2} > 0. 
\end{eqnarray*}
\begin{main}\label{main-EcC}
Let $X_{m}$ be a BF-admissible closed oriented smooth $4$-manifold and consider the following connected sum:
\begin{eqnarray*}
M^{{\ell}_{1}, {\ell}_{2}}_{g, h, j}:=(\#^{j}_{m=1}X_{m}) \# (\Sigma_{h} \times \Sigma_{g}) \# \ell_{1}({S}^{1} \times {S}^{3}) \# \ell_{2} \overline{{\mathbb C}{P}^{2}}, 
\end{eqnarray*}
where $j = 1, 2$, ${\ell}_{1}, {\ell}_{2} \geq 1$ and $g, h \geq 3$ are odd integers. Then, there are infinitely many sufficiently large integers $g, h, {\ell}_{1}, \ell_{2}$ for which $M^{{\ell}_{1}, {\ell}_{2}}_{g, h, j}$ has property $\mathcal E$. 
\end{main}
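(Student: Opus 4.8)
The plan is to mimic, almost verbatim, the proof of Theorem \ref{main-AcC} (= Theorem \ref{main-existence-thm}), replacing at each step the curvature-obstruction coming from the non-singular Ricci flow with the Einstein obstruction of \cite{ishi-1} (Theorem I there), and replacing the constant $1295\pi^2$ with $81\pi^2$ wherever Gromov's inequality (\ref{GTHK-in}) enters in place of (\ref{gh-T}). First I would record the analogue of Lemma \ref{cosimplicial-lem-2}: for $M^{\ell_1,\ell_2}_{g,h,j}$, using the same connected-sum formulas for $\|M\|$ and for $2\eu(M)\pm 3\sign(M)$ from Lemma \ref{simplicial-lem}, one seeks infinitely many large $g,h,\ell_1,\ell_2$ simultaneously satisfying: (a) the strict Gromov inequality $2\eu(M)-3|\sign(M)| > \|M\|/(81\pi^2)$, which splits into the two one-sided inequalities exactly as in Proposition \ref{prop-FZZ-HTin} but now with $\kappa'(g,h)$ of the display preceding this theorem in the role of $\kappa(g,h)$; and (b) the Einstein-obstruction inequality, which is the precise numerical hypothesis of Theorem I of \cite{ishi-1} applied to $M=(\#_{m=1}^{j}X_m)\#N$ with $N:=(\Sigma_h\times\Sigma_g)\#\ell_1(S^1\times S^3)\#\ell_2\CPb$ (or, following the bookkeeping of Corollary \ref{speical-ein}, with $\Sigma_h\times\Sigma_g$ absorbed as an additional BF-admissible summand and $N:=\ell_1(S^1\times S^3)\#\ell_2\CPb$). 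As in the proof of Lemma \ref{cosimplicial-lem-2}, the inequality forcing ``no PSC direction / large $c_1^2$'' is arranged by first choosing $g$ or $h$ large so that the relevant difference $A-B$ of the two bounds becomes a large positive integer — here $A-B$ has the shape $\tfrac23 c_j + \bigl(\tfrac83 - \tfrac{24}{81\pi^2}\bigr)(1-h)(1-g) - \tfrac{\|X\|}{81\pi^2}$, and the coefficient $\tfrac83 - \tfrac{24}{81\pi^2}>0$, so this works — and then picking $\ell_1,\ell_2$ in the resulting window, with $\ell_2$ taken large at the end to also satisfy the ``minus'' side of (\ref{GTHK-in}).

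Having secured this combinatorial lemma, the theorem follows in three short moves, exactly parallel to the proof of Theorem \ref{main-existence-thm}. From (\ref{simplical-conn}) one has $\|M^{\ell_1,\ell_2}_{g,h,j}\| = 24(g-1)(h-1) + \sum_{m=1}^{j}\|X_m\| \neq 0$ for all $g,h>1$, giving the first half of condition 1 of Definition \ref{def-property-E}; the strict Gromov inequality is the part (a) just arranged. Second, $M$ is a connected sum of BF-admissible $4$-manifolds with a $b^+=0$ piece (namely $N$), so Theorem \ref{new-BF-non-vanishing} gives non-trivial stable cohomotopy Seiberg–Witten invariants, hence monopole classes, hence $\mathcal Y(M)\le 0$ and (via Proposition \ref{lambda-k-inv} / Theorem \ref{main-CCC}) $\bar\lambda(M)<0$ whenever the relevant $c_1^2$-sum is positive — which again holds for $g,h$ large; I would only use the monopole-class consequence needed for LeBrun's Weyl-curvature estimate feeding into Theorem I of \cite{ishi-1}. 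Third, the Einstein obstruction of \cite{ishi-1}, under exactly the numerical hypothesis (b), yields that no smooth structure in question admits an Einstein metric; since the construction already furnishes such smooth structures (e.g. take each $X_m$ among the BF-admissible manifolds of Theorems \ref{main-A}, \ref{main-B}), condition 2 of Definition \ref{def-property-E} holds. Combining, $M^{\ell_1,\ell_2}_{g,h,j}$ has property $\mathcal E$, and since the lemma produces infinitely many such $(g,h,\ell_1,\ell_2)$, we are done.

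The main obstacle — and the only place any real care is needed — is checking that the numerical hypothesis of Theorem I of \cite{ishi-1} (the Einstein non-existence criterion) is genuinely implied by the two-sided window $A' > 4(j+\ell_1)+\ell_2 > B'$ that we can arrange, i.e. that the Einstein obstruction's required inequality is of the same homogeneous form in $(\eu,\sign,\|\cdot\|)$ as the Ricci-flow obstruction (\ref{asm}) used in Corollary \ref{speical-ein}, up to the replacement of the universal constant. In \cite{ishi-1} the Einstein obstruction is derived from the very same curvature bound (\ref{weyl-leb-sca-2}) and the Gauss–Bonnet/signature identity (\ref{4-im}) that drive Theorem \ref{Ricci-non-sin}, so the inequality to be verified is, after the dust settles, $4n - (2\eu(N)+3\sign(N)) > \tfrac13\sum_{m=1}^{n} c_1^2(X_m)$ — literally (\ref{asm}) — and the positivity of $\tfrac83 - \tfrac{24}{81\pi^2}$ is exactly what makes the window nonempty. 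Once this is confirmed, every remaining step is a routine transcription of Subsection \ref{sub-53} with $1295\pi^2 \rightsquigarrow 81\pi^2$ and ``no quasi-non-singular Ricci flow solution'' $\rightsquigarrow$ ``no Einstein metric''.
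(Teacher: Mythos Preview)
Your proposal is correct and follows essentially the same route the paper indicates: the paper does not give a full proof of Theorem \ref{main-EcC} but simply says to rerun the arguments of Lemma \ref{cosimplicial-lem-2} and Proposition \ref{prop-FZZ-HTin} with $\kappa'(g,h)=4(1-h)(1-g)-\frac{24(1-h)(1-g)}{81\pi^{2}}$ in place of $\kappa(g,h)$ and with Theorem I of \cite{ishi-1} (the Einstein obstruction) in place of Theorem \ref{Ricci-non-sin}, which is exactly what you do. One small remark: property $\mathcal E$ in Definition \ref{def-property-E} does not require $\bar\lambda(M)<0$, so your second paragraph's detour through Theorem \ref{main-CCC} is unnecessary here --- all that is needed from the Seiberg--Witten side is the monopole-class input to the LeBrun estimate (\ref{weyl-leb-sca-2}), as you yourself note.
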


Similarly, we also have an Einstein version of Theorem \ref{main-AcCC}:
\begin{main}\label{main-EcCC}
Let $X$ be a BF-admissible closed oriented smooth $4$-manifold and consider the following connected sum:
\begin{eqnarray*}
M^{{\ell}_{1}, {\ell}_{2}}_{g, h}:=X \# K3 \# (\Sigma_{h} \times \Sigma_{g}) \# \ell_{1}({S}^{1} \times {S}^{3}) \# \ell_{2} \overline{{\mathbb C}{P}^{2}}, 
\end{eqnarray*}
where $j = 1, 2$, ${\ell}_{1}, {\ell}_{2} \geq 1$ and $g, h \geq 3$ are odd integers. Then, there are infinitely many sufficiently large integers $g, h, {\ell}_{1}, \ell_{2}$ for which $M^{{\ell}_{1}, {\ell}_{2}}_{g, h}$ has $\mathcal{E}-\infty$ property. 
\end{main}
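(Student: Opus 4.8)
The plan is to run the argument for Theorem \ref{main-AcCC} essentially verbatim, with ``Einstein metric'' replacing ``quasi-non-singular solution to the normalized Ricci flow'' throughout, property $\mathcal{E}$ replacing property $\mathcal{R}$, Theorem \ref{main-EcC} replacing Theorem \ref{main-existence-thm}, and the Einstein obstruction of \cite{ishi-1} (Theorem I there) replacing Theorem \ref{Ricci-non-sin}. First I would recall the sequence $\{Y_m\}_{m\in\mathbb N}$ of homotopy $K3$ surfaces obtained from a Kummer surface by a logarithmic transformation of order $2m+1$ along a non-singular fibre: each $Y_m$ is a simply connected K\"ahler surface with $b^+(Y_m)=3$, $b^-(Y_m)=19$, hence homeomorphic to $K3$ by Freedman and BF-admissible, with $\pm c_1(Y_m)\in{\frak C}(Y_m)$ by Witten's theorem, and with the canonical classes $c_1(Y_m)$ forming an \emph{unbounded} family in $H^2$ (the multiple-fibre formula makes $c_1(Y_m)$ a growing multiple of the class of the multiplicity-$(2m+1)$ fibre) --- exactly the mechanism used in the proof of Theorem \ref{main-AcCC}. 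Then I would set
\[
Z^{\ell_1,\ell_2}_{g,h}(m):=X\# Y_m\#(\Sigma_h\times\Sigma_g)\#\ell_1(S^1\times S^3)\#\ell_2\,\overline{{\mathbb C}{P}^{2}},
\]
which is homeomorphic to $M^{\ell_1,\ell_2}_{g,h}$ for every $m$, since $\eu$, $\sign$, $\pi_1$ and the $w_2$-type do not depend on $m$.

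Next, with $g,h,\ell_1,\ell_2$ fixed, I would show that $\{Z^{\ell_1,\ell_2}_{g,h}(m)\}_{m\in\mathbb N}$ contains infinitely many diffeomorphism types. Since $X$, $Y_m$ and (by Theorem \ref{main-A}) $\Sigma_h\times\Sigma_g$ are BF-admissible and $N:=\ell_1(S^1\times S^3)\#\ell_2\overline{{\mathbb C}{P}^{2}}$ has $b^+(N)=0$, Theorem \ref{new-BF-non-vanishing} together with the monopole-class analysis behind Theorem \ref{mono-key-bounds} shows that $Z^{\ell_1,\ell_2}_{g,h}(m)$ has non-trivial stable cohomotopy Seiberg--Witten invariant and that $\pm c_1(X)\pm c_1(Y_m)+\sum_{i=1}^{b_2(N)}\pm E_i\in{\frak C}\bigl(Z^{\ell_1,\ell_2}_{g,h}(m)\bigr)$ for arbitrary independent signs. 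If only finitely many diffeomorphism types occurred, some $Z^{\ell_1,\ell_2}_{g,h}(m_0)$ would be diffeomorphic to $Z^{\ell_1,\ell_2}_{g,h}(m)$ for infinitely many $m$, forcing ${\frak C}\bigl(Z^{\ell_1,\ell_2}_{g,h}(m_0)\bigr)$ to be infinite --- contradicting the finiteness in Proposition \ref{mono-bounded}. Hence there are infinitely many diffeomorphism types, all carried by the single homeomorphism type $M^{\ell_1,\ell_2}_{g,h}$.

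Finally I would apply Theorem \ref{main-EcC} with $j=2$, $X_1=X$, $X_2=Y_m$. Because $\eu(Y_m)=24$, $\sign(Y_m)=-16$ and $||Y_m||=0$ are independent of $m$, the inequalities cutting out the admissible tuples $(g,h,\ell_1,\ell_2)$ --- the $\mathcal{E}$-analogues of (\ref{connected-condition-2})--(\ref{connected-condition-4}), built with $\kappa'(g,h)$ in place of $\kappa(g,h)$ --- are the same for every $m$. So a single infinite family of large tuples $(g,h,\ell_1,\ell_2)$ works for all $m$ at once: each $Z^{\ell_1,\ell_2}_{g,h}(m)$ then has $||Z^{\ell_1,\ell_2}_{g,h}(m)||\neq0$, satisfies $2\eu-3|\sign|>||\cdot||/(81\pi^2)$, and carries no Einstein metric --- the last by Theorem I of \cite{ishi-1}, which is obtained through the curvature bound (\ref{weyl-leb-sca-2})/(\ref{u-11}) and the Gauss--Bonnet identity (\ref{4-im}) with the trace-free Ricci curvature set to zero, exactly as property $\mathcal{E}$ is extracted in Theorem \ref{main-EcC}. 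Combining with the previous paragraph, for each such tuple $M^{\ell_1,\ell_2}_{g,h}$ admits infinitely many pairwise non-diffeomorphic smooth structures, none of them Einstein, while satisfying the strict inequality (\ref{GTHK-in}); that is, $M^{\ell_1,\ell_2}_{g,h}$ has $\mathcal{E}-\infty$ property.

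I expect the main obstacle to be bookkeeping rather than anything conceptual: one must verify that the Einstein obstruction genuinely survives the two changes from the pure $K3$ case --- the extra $b^+=0$ summand $N$ and the homotopy-$K3$ factor $Y_m$ replacing $K3$ --- that is, that $\sum_m c_1^2(X_m)$ enters (\ref{u-11}) with exactly the coefficient forcing a contradiction with the strict form of (\ref{GTHK-in}), and that the arithmetic of $\kappa'(g,h)$ (which differs from $\kappa(g,h)$ only by the replacement of $1295$ with $81$) still leaves an $A>4(j+\ell_1)+\ell_2>B$ window with $A-B\to\infty$ as $g$ or $h$ grows. Both points go through as in Lemma \ref{cosimplicial-lem-2}, Proposition \ref{prop-FZZ-HTin} and the proof of Theorem \ref{main-EcC}; the only genuinely geometric ingredient is the unboundedness of $\{c_1(Y_m)\}$, which is classical.
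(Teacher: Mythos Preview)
Your proposal is correct and follows exactly the approach the paper itself indicates: the paper does not give a detailed proof of Theorem \ref{main-EcCC} but simply states that it is the ``Einstein version of Theorem \ref{main-AcCC}'', obtained by replacing Theorem \ref{Ricci-non-sin} with the Einstein obstruction (Theorem I of \cite{ishi-1}) and $\kappa(g,h)$ with $\kappa'(g,h)$, and otherwise rerunning the homotopy-$K3$ / monopole-class-finiteness argument verbatim. Your write-up in fact supplies more detail than the paper does, but the strategy is identical.
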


By Theorems \ref{main-A}, \ref{main-B}, \ref{main-EcC}, and \ref{main-EcCC}, we are able to obtain new examples of non-Einstein 4-manifolds with non-trivial simplicial volume.

\subsection{A generalization of the FZZ conjecture and related results}\label{sub-54}

The FZZ Conjecture claims that if there is a quasi-non-singular solution to the normalized Ricci flow on a closed oriented smooth Riemannian $4$-manifold $X$ with $||X|| \not=0$ and $\bar{\lambda}(X) < 0$, then the following holds:
\begin{eqnarray}\label{GTHK-in-3}
2\eu(X) - 3|\sign(X)| \geq \frac{1}{1295{\pi}^2}||X||. 
\end{eqnarray}
A typical example of non-singular solution of the normalized Ricci flow is an Einstein metric. In fact, any Einstein metric is a fixed point of the normalized Ricci flow. The motivation of Conjecture \ref{conj-1} is coming from the result on Einstein $4$-manifolds. As was already mentioned in the previous section, any Einstein $4$-manifold $X$ must satisfy the inequality (\ref{GTHK-in}). \par
On the other hand, let us recall the definition of the volume entropy (or asymptotic volume) of a Riemannian manifold. Let $X$ be a closed oriented Riemannian manifold with smooth metric $g$, and let $\Tilde{M}$ be its universal cover with the induced metric $\Tilde{g}$. For each $\Tilde{x} \in \Tilde{M}$, let $V(\Tilde{x}, R)$ be the volume of the ball with the center $\Tilde{x}$ and radius $R$. We set 
\begin{eqnarray*}
{\mu}(X, g):=\lim_{R \rightarrow +\infty}\frac{1}{R}\log V(\Tilde{x}, R). 
\end{eqnarray*}
Thanks to work of Manning \cite{man}, it turns out that this limit exists and is independent of the choice of $\Tilde{x}$. It is known \cite{mil} that ${\mu}(X, g) > 0$ if and only if the fundamental group $\pi_{1}({X})$ of $X$ has exceptional growth. We call ${\lambda}(X, g)$ the volume entropy of the metric $g$ and define the volume entropy of $X$ to be
\begin{eqnarray*}
{\mu}(X):=\inf_{g \in {\cal R}^{1}_{X}}{\lambda}(X, g), 
\end{eqnarray*}
where ${\cal R}^{1}_{X}$ means the set of all Riemannian metrics $g$ with unit volume $vol_{g}=1$. This invariant sometimes vanishes even when ${\lambda}(X, g) > 0$ for every $g$. There is a close relationship between simplicial volume and volume entropy of a compact manifold $M$ of dimension $n$ as follows.
$$
\frac{n^{n/2}}{n !}|| M || \geq {\mu}(M)^{n}. 
$$
Now, it is known that any closed Einstein $4$-manifold $X$ must satisfy the following bound (see Introduction of \cite{BIS}):
\begin{eqnarray}\label{gro-vol}
2\eu(X) - 3|\sign(X)| \geq \frac{1}{54{\pi}^2}{\mu}(X)^{4}.  
\end{eqnarray}
The inequality (\ref{GTHK-in-3}) can be derived from this inequality. Hence, the inequality (\ref{gro-vol}) is more stronger than the inequality (\ref{GTHK-in-3}). \par
Based on this result on Einstein case, it is so natural to propose the following conjecture which includes Conjecture \ref{conj-1} as a special case.  
\begin{conj}\label{conj-2}
Let $X$ be a closed oriented smooth Riemannian $4$-manifold with $\mu(X) \not=0$ and $\bar{\lambda}(X) < 0$. Suppose that there is a quasi-non-singular solution to the normalized Ricci flow on $X$. Then the following holds:
\begin{eqnarray}\label{gh-T-2-mu}
2\eu(X) - 3|\sign(X)| \geq \frac{1}{54{\pi}^2}{\mu}(X)^{4}. 
\end{eqnarray}
 \end{conj}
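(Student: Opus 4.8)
The final statement is Conjecture \ref{conj-2}, which is an open conjecture rather than a theorem, so there is no proof to reconstruct. I will instead outline the evidence and partial results one would marshal toward it, and explain what the genuine obstacle is.

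\bigskip

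\noindent\textbf{Proposal toward Conjecture \ref{conj-2}.}
The plan is to mimic the structure by which the Einstein bound (\ref{gro-vol}) is proved and then to import the Ricci-flow machinery of Subsection \ref{sub-51} to replace "Einstein" by "admits a quasi-non-singular solution to the normalized Ricci flow". First I would recall the two ingredients behind (\ref{gro-vol}): the Chern--Gauss--Bonnet integrand rewriting (\ref{4-im}), and LeBrun's estimate bounding $\int_X(\tfrac{s_g^2}{24}+2|W^+_g|^2)\,d\mu_g$ from below by a volume-entropy term via the $\bar{\lambda}$-type analysis (this is the content of the $\mu(X)^4$ refinement stated around (\ref{gro-vol}), coming from \cite{BIS}). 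The key point is that for an Einstein metric the trace-free Ricci curvature $\stackrel{\circ}{r}_g$ vanishes identically, so (\ref{4-im}) gives $2\eu(X)+3\sign(X)=\tfrac{1}{4\pi^2}\int_X(2|W^+_g|^2+\tfrac{s_g^2}{24})\,d\mu_g$ exactly, and combining with the lower bound yields the inequality. The strategy for the conjecture is to show that, along a quasi-non-singular solution $\{g(t)\}$, the averaged trace-free Ricci term $\int_\ell^{\ell+1}\!\int_X|\stackrel{\circ}{r}_{g(t)}|^2\,d\mu_{g(t)}\,dt\to 0$ as in (\ref{fzz-ricci-0}), so that (\ref{4-im}) becomes asymptotically an \emph{equality} between the topological quantity and a time-average of $\tfrac{1}{4\pi^2}\int_X(2|W^+_{g(t)}|^2+\tfrac{s_{g(t)}^2}{24})\,d\mu_{g(t)}$, exactly as in the proof of Theorem \ref{Ricci-non-sin}.

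\medskip

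\noindent The steps, in order, would be: (i) From $\mu(X)\neq 0$ conclude $\pi_1(X)$ has exponential growth, so $X$ is aspherical-ish enough that $\|X\|\neq 0$ and in particular $X$ admits no metric of positive scalar curvature; hence $\mathcal{Y}(X)\le 0$ and, by Proposition \ref{lambda-k-inv}, $\bar{\lambda}(X)=\mathcal{Y}(X)$, tying the hypothesis $\bar{\lambda}(X)<0$ to a negative Yamabe invariant. (ii) Run Lemma \ref{mini-scal-lem} with $k=1$ to get a uniform negative upper bound on $\hat{s}_{g(t)}=\min_X s_{g(t)}$, then invoke Lemma \ref{FZZ-prop} (with $C$ from quasi-non-singularity and $c$ from step (ii)) to obtain $\int_0^\infty\!\int_X|\stackrel{\circ}{r}_{g(t)}|^2\,d\mu_{g(t)}\,dt<\infty$ and hence the vanishing (\ref{fzz-ricci-0}) along a sequence $\ell\to\infty$. (iii) Integrate (\ref{4-im}) over $[\ell,\ell+1]$ and pass to the limit to get
\[
2\eu(X)+3\sign(X)=\lim_{\ell\to\infty}\frac{1}{4\pi^2}\int_\ell^{\ell+1}\!\!\int_X\Big(2|W^+_{g(t)}|^2+\frac{s_{g(t)}^2}{24}\Big)\,d\mu_{g(t)}\,dt.
\]
(iv) Establish a volume-entropy lower bound for the \emph{right-hand} integrand that holds for \emph{every} metric $g$ on $X$, of the form $\tfrac{1}{4\pi^2}\int_X(2|W^+_g|^2+\tfrac{s_g^2}{24})\,d\mu_g\ge \tfrac{1}{54\pi^2}\mu(X)^4$; this is the metric-independent estimate used in \cite{BIS} for the Einstein case, and it should apply verbatim since it does not use Einstein. (v) Apply the same argument with the orientation reversed to control $-3\sign(X)$ as well, yielding $2\eu(X)-3|\sign(X)|\ge\tfrac{1}{54\pi^2}\mu(X)^4$.

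\medskip

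\noindent The main obstacle is step (iv) together with the subtlety that the lower bound in \cite{BIS} (and in LeBrun's work) is a bound for \emph{integrals against a fixed metric} but is typically derived using a \emph{Seiberg--Witten monopole class}; one must check that $\mu(X)\neq 0$ and $\bar{\lambda}(X)<0$ (equivalently $\mathcal Y(X)\le 0$) suffice to guarantee the relevant monopole-class input, or else replace that input by a purely geometric argument using the volume entropy directly (as Besson--Courtois--Gallot type estimates do). A second, more serious obstacle is that Lemma \ref{FZZ-prop}'s conclusion (\ref{fzz-ricci-0}) is only a statement about \emph{time-averages} on unit intervals, not about $\int_X|\stackrel{\circ}{r}_{g(t)}|^2$ for a single $t$; this is exactly why the equality in step (iii) must be taken in an averaged/limiting sense, and it is fine here, but it means the argument genuinely needs the quasi-non-singular hypothesis and cannot be weakened. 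The hardest conceptual gap --- and the reason this remains a conjecture --- is that there is no known mechanism forcing $W^+_{g(t)}$ to behave well along a merely \emph{quasi}-non-singular flow (where only $|s_{g(t)}|$, not $|Rm_{g(t)}|$, is bounded); the Weyl curvature could in principle concentrate, and controlling $\int_X|W^+_{g(t)}|^2$ uniformly in $t$ is precisely what a full proof would require. I expect that establishing this $W^+$ control, or circumventing it, is the crux.
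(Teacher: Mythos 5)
The statement you were asked about is Conjecture \ref{conj-2}, which the paper explicitly describes as ``completely open'' and for which it offers no proof; the paper's actual contribution in Subsection \ref{sub-54} is to show that the \emph{converse} of this conjecture fails for large families of $4$-manifolds (Theorems \ref{main-EE-mu} and \ref{main-EEE-mu}). You correctly recognized this, and your proposed line of attack --- using Lemma \ref{mini-scal-lem}, Lemma \ref{FZZ-prop} and the averaged Gauss--Bonnet identity (\ref{4-im}) to replace the Einstein condition by the asymptotic vanishing of the time-averaged trace-free Ricci term, exactly as in the proof of Theorem \ref{Ricci-non-sin} --- is the natural one and matches the heuristic the paper itself gives for why the conjecture is plausible.

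One concrete correction to your step (iv), which is where the real difficulty sits. The bound (\ref{gro-vol}) for Einstein $4$-manifolds is \emph{not} obtained from a metric-independent estimate of the form $\frac{1}{4\pi^2}\int_X(2|W^+_g|^2+\frac{s_g^2}{24})\,d\mu_g\geq\frac{1}{54\pi^2}\mu(X)^4$ that ``does not use Einstein.'' It is obtained by applying Bishop's volume comparison to the Einstein metric itself: if ${Ric}_g=\lambda g$ with $\lambda<0$ and $vol_g=1$, then $\mu(X,g)^2\leq 3|\lambda|$, hence $\int_X s_g^2\,d\mu_g=16\lambda^2\geq\frac{16}{9}\mu(X)^4$, and since $\stackrel{\circ}{r}_g=0$ one gets $2\eu(X)+3\sign(X)\geq\frac{1}{96\pi^2}\int_X s_g^2\,d\mu_g\geq\frac{1}{54\pi^2}\mu(X)^4$ (note $96\cdot\frac{9}{16}=54$). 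This chain uses the Ricci lower bound twice --- once for Bishop and once to convert $\lambda$ into $s$ --- and both links break for a general metric along a merely quasi-non-singular flow. So the crux is not only the control of $W^+_{g(t)}$ that you identify at the end, but already the step of relating the volume entropy of $X$ to the scalar curvature of the evolving metrics without any Ricci lower bound. Since the paper contains no proof of the conjecture, there is nothing further to compare your proposal against.
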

Though this conjecture is completely open, we are able to show that the converse of this conjecture also does not hold in general. To state the main results in this direction, let us introduce
\begin{defn}\label{def-property-mu}
Let $X$ be a closed oriented topological $4$-manifold. We say $X$ has property $\mu$ if $X$ satisfies the following properties. 
\begin{enumerate}
\item $X$ has  ${\mu}(X) \not=0$ and satisfies the strict case of the inequality (\ref{gh-T-2-mu}):
\begin{eqnarray*}
2\eu(X) - 3|\sign(X)| > \frac{1}{54{\pi}^2}{\mu}(X)^{4}. 
\end{eqnarray*}
\item $X$ admits at least one smooth structure for which no for which Perelman's $\bar{\lambda}$ invariant is negative and there is no quasi-non-singular solution to the normalized Ricci flow for any initial metric. 
\end{enumerate}
Similarly, we say $X$ has \textit{$\mu - \infty$ property} if $X$ satisfies the above condition 1 and moreover admits infinitely many smooth structures for which Perelman's $\bar{\lambda}$ invariants are negative and there is no quasi-non-singular solution to the normalized Ricci flow for any initial metric. 
\end{defn}

In \cite{gom}, Gompf showed that, for arbitrary integers $\alpha \geq 2$ and $\beta \geq 0$, one can construct a simply connected symplectic spin $4$-manifold $X_{\alpha, \beta}$ satisfying
\begin{eqnarray}\label{go-1}
\Big( \eu(X_{\alpha, \beta}), \sign(X_{\alpha, \beta}) \Big)= \Big( 24\alpha+4\beta, -16\alpha \Big). 
\end{eqnarray}
 Notice also that this implies
\begin{eqnarray}\label{go-2}
b^+(X_{\alpha, \beta}) &=& 4\alpha+2\beta-1,
\end{eqnarray}
\begin{eqnarray}\label{go-3}
2\eu(X_{\alpha, \beta}) + 3\sign(X_{\alpha, \beta}) &=& 8\beta, 
\end{eqnarray}
\begin{eqnarray}\label{go-4}
2\eu(X_{\alpha, \beta}) - 3\sign(X_{\alpha, \beta}) &=& 8(12\alpha+\beta).
\end{eqnarray}
In what follows, we shall call $X_{\alpha, \beta}$ the Gompf manifold of degree $(\alpha, \beta)$.  Since $b^+(X_{\alpha, \beta}) = 4\alpha+2\beta-1$ by (\ref{go-2}), we have $b^+(X_{\alpha, \beta}) \equiv 3 \ (\bmod \ 4)$ if $4\alpha+2\beta-1 \equiv 3 \ (\bmod \ 4)$ is satisfied. The Gompf manifold $X_{\alpha, \beta}$ is simply connected, we get $b_{1}(X_{\alpha, \beta})=0$. In particular, $X_{\alpha, \beta}$ is BF-admissible in the case where $4\alpha+2\beta-1 \equiv 3 \ (\bmod \ 4)$. \par
Then we have
\begin{lem}\label{mu-lem-2}
Let $X$ be a closed oriented smooth $4$-manifold and consider the following connected sum
\begin{eqnarray*}
M:=X \# X_{\alpha, \beta} \# (\Sigma_{h} \times \Sigma_{g}) \# \ell_{1}({S}^{1} \times {S}^{3}) \# \ell_{2} \overline{{\mathbb C}{P}^{2}}.
\end{eqnarray*}
Then, there are infinitely many integers $\alpha, \beta, g, h, {\ell}_{1}, {\ell}_{2}$ for which the following conditions are satisfied simultaneously: 
\begin{eqnarray}\label{connected-condition-1-mu}
4\alpha+2\beta-1 \equiv 3 \ (\bmod \ 4),  
\end{eqnarray}
\begin{eqnarray}\label{connected-condition-2-mu}
2\eu(X) - 3\sign(X) + 8(12\alpha+\beta)   >  (\frac{128}{27} - 4)(g-1)(h-1)  + 4(j + {\ell}_{1}) -5 \ell_{2},  
\end{eqnarray}
\begin{eqnarray}\label{connected-condition-3-mu}
 2\eu(X) + 3\sign(X) + 8 \beta  > (\frac{128}{27} - 4)(g-1)(h-1) +  4(j + {\ell}_{1}) + \ell_{2},  
\end{eqnarray}
\begin{eqnarray}\label{connected-condition-4-mu}
4(j+\ell_{1}) + \ell_{2} > \frac{1}{3}\Big( 2\eu(X)+ 3\sign(X) +  8 \beta  + 4(1-h)(1-g) \Big).   
\end{eqnarray}
\end{lem}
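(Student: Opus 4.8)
The plan is to reduce Lemma \ref{mu-lem-2} to a purely arithmetic statement and then exhibit infinitely many integer solutions by fixing most parameters and letting one or two run to infinity. First I would record the relevant numerical data: by (\ref{go-3}) and (\ref{go-4}) we have $2\eu(X_{\alpha,\beta})+3\sign(X_{\alpha,\beta}) = 8\beta$ and $2\eu(X_{\alpha,\beta})-3\sign(X_{\alpha,\beta}) = 8(12\alpha+\beta)$, while for $\Sigma_h\times\Sigma_g$ one has $2\eu-3\sign = 2\eu+3\sign = 2\eu(\Sigma_h\times\Sigma_g) = 8(g-1)(h-1)$ (both signs agree since the signature vanishes), and for $N:=\ell_1(S^1\times S^3)\#\ell_2\overline{\mathbb{C}P}{}^2$ one has $2\eu(N)+3\sign(N) = 4-4\ell_1-\ell_2$ and $2\eu(N)-3\sign(N) = 4-4\ell_1+5\ell_2$. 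Feeding these into the connected-sum additivity formula $2\eu(\#M_i)\pm 3\sign(\#M_i) = \sum(2\eu(M_i)\pm 3\sign(M_i)) - 4(\#\text{summands}-1)$ — exactly as in Lemma \ref{simplicial-lem} — turns each of (\ref{connected-condition-2-mu})--(\ref{connected-condition-4-mu}) into an explicit linear inequality in $\alpha,\beta,g,h,\ell_1,\ell_2$; here I should double-check the coefficient $\tfrac{128}{27}-4$ against the identity $\mu(M)^4 \le \tfrac{4^{4/2}}{4!}\|M\| = \tfrac{2}{3}\|M\|$ combined with $\|\Sigma_h\times\Sigma_g\| = 24(g-1)(h-1)$, which is why the factor $\tfrac{24}{54} = \tfrac{4}{9}$ appears and, through $2\eu-3|\sign| > \tfrac{1}{54\pi^2}\mu^4$, produces the rational coefficient written in the statement.

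Next I would treat the four conditions one at a time. The congruence (\ref{connected-condition-1-mu}) reads $2\alpha \equiv 2 \pmod 4$, i.e.\ $\alpha$ odd, which leaves infinitely many admissible $\alpha$; no interaction with the inequalities. For the three inequalities I would introduce the abbreviations $P := 2\eu(X)+3\sign(X)$, $Q := 2\eu(X)-3\sign(X)$ (fixed once $X$ is fixed), $c := (g-1)(h-1)$, and note that $(\ref{connected-condition-2-mu})$, $(\ref{connected-condition-3-mu})$, $(\ref{connected-condition-4-mu})$ become respectively $Q + 8(12\alpha+\beta) > (\tfrac{128}{27}-4)c + 4(j+\ell_1) - 5\ell_2$, $P + 8\beta > (\tfrac{128}{27}-4)c + 4(j+\ell_1) + \ell_2$, and $4(j+\ell_1)+\ell_2 > \tfrac13(P + 8\beta + 4c)$. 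The first of these is made trivial by taking $\ell_2$ large (the $-5\ell_2$ on the right and the $12\alpha$ on the left both help), so the real constraint is the pair (\ref{connected-condition-3-mu})--(\ref{connected-condition-4-mu}), which sandwiches $4(j+\ell_1)+\ell_2$ between $B := \tfrac13(P+8\beta+4c)$ and $A := P + 8\beta - (\tfrac{128}{27}-4)c - 4(j+\ell_1)$; wait — I should instead keep $4(j+\ell_1)+\ell_2$ symmetric and write the window as $B < 4(j+\ell_1)+\ell_2 < A'$ where $A' := P + 8\beta + (\tfrac{128}{27}-4)(\text{sign correction})$. The key computation is $A - B$: since $4 - \tfrac{128}{27} = \tfrac{108-128}{27} = -\tfrac{20}{27} < 0$, the coefficient of $c$ in $A$ is $-(\tfrac{128}{27}-4) = \tfrac{20}{27} > 0$ while in $B$ it is $\tfrac43$, so $A - B$ picks up a term $(\tfrac{20}{27} - \tfrac43)c$... this is where I must be careful about which Gromov constant enters each side, so I would recompute using the correct inequality $\mu(M)^4 \le \tfrac23\|M\|$ so that the multiple of $c$ surviving in $A-B$ comes out \emph{positive} — the structure of the analogous Lemma \ref{cosimplicial-lem-2}, where the quantity $A-B$ was shown to be a large positive integer for large $g,h$, guarantees this is the intended outcome.

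Once $A - B$ is seen to grow linearly in $c = (g-1)(h-1)$ (with a positive leading coefficient) as $g,h\to\infty$, the window $(B, A)$ has length tending to $+\infty$, so for every sufficiently large odd pair $(g,h)$ there is an integer of the form $4(j+\ell_1)+\ell_2$ lying strictly inside it; concretely I would fix $\ell_1 = 1$ and choose $\ell_2$ to be the unique residue making $4(j+\ell_1)+\ell_2$ land in the interval, noting $\ell_2 \ge 1$ is automatic once the interval lies far to the right, which it does because $B \to \infty$. Then I would go back and pick $\ell_2$ even larger if necessary to also satisfy the (now slack) inequality (\ref{connected-condition-2-mu}), which only requires $\ell_2$ bounded below, and finally choose $\alpha$ odd and large, and $\beta$ large, to absorb any remaining positivity requirements — since there are infinitely many odd $(g,h)$, infinitely many valid $\ell_2$ for each, and infinitely many odd $\alpha$, the solution set is infinite. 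The main obstacle I anticipate is purely bookkeeping: getting the Gromov constants ($54\pi^2$ versus the $\tfrac23 = \tfrac{4^{4/2}}{4!}$ from the volume-entropy/simplicial-volume comparison) and the connected-sum correction terms $-4(\#\text{summands}-1)$ consistently right so that the decisive quantity $A - B$ genuinely has a positive coefficient of $(g-1)(h-1)$; everything downstream of that sign is immediate, mirroring the proof of Lemma \ref{cosimplicial-lem-2} almost verbatim.
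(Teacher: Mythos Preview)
Your overall architecture is right --- reduce to a window $B < 4(j+\ell_1)+\ell_2 < A$ coming from (\ref{connected-condition-3-mu}) and (\ref{connected-condition-4-mu}), show the window width $A-B$ tends to $+\infty$, and pick $\ell_1,\ell_2$ inside --- but the driving parameter you choose does not work. With $c:=(g-1)(h-1)$ and $P:=2\eu(X)+3\sign(X)$, (\ref{connected-condition-3-mu}) gives $A = P+8\beta - (\tfrac{128}{27}-4)c$ and (\ref{connected-condition-4-mu}) gives $B=\tfrac13(P+8\beta+4c)$, so
\[
A-B \;=\; \tfrac{2}{3}P + \tfrac{16}{3}\beta \;-\;\Bigl(\tfrac{128}{27}-4+\tfrac{4}{3}\Bigr)c \;=\; \tfrac{2}{3}P + \tfrac{16}{3}\beta - \tfrac{56}{27}\,c.
\]
The coefficient of $c$ is $-\tfrac{56}{27}<0$, not positive: your sign slip is in writing $-(\tfrac{128}{27}-4)=\tfrac{20}{27}$ when in fact $-(\tfrac{128}{27}-4)=4-\tfrac{128}{27}=-\tfrac{20}{27}$. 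Hence sending $g,h\to\infty$ \emph{closes} the window rather than opening it, and the analogy with Lemma~\ref{cosimplicial-lem-2} breaks down precisely because there the simplicial-volume constant $\tfrac{24}{1295\pi^2}$ is tiny, whereas here the volume-entropy bound produces $\tfrac{128}{27}>4$. (Incidentally, the constant $\tfrac{128}{27}$ does not come from $\mu^4\le\tfrac23\|M\|$ but from the sharper estimate $\mu(\Sigma_h\times\Sigma_g)^4\le 256\pi^2(g-1)(h-1)$ quoted in the paper.)

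The paper's proof uses $\beta$ as the parameter that forces $A-B\to+\infty$: the coefficient of $\beta$ in $A-B$ is $\tfrac{16}{3}>0$, so for any fixed $g,h$ one takes $\beta$ large (with $\alpha$ chosen so that (\ref{connected-condition-1-mu}) holds), finds $\ell_1,\ell_2$ in the resulting wide window, and then observes that (\ref{connected-condition-2-mu}) is automatic since its left side also grows with $\beta$ (and with $\alpha$). Your plan to ``pick $\ell_2$ even larger'' afterwards would violate (\ref{connected-condition-3-mu}), and ``choosing $\beta$ large at the end'' is circular since $A$ and $B$ both depend on $\beta$; the order matters, and $\beta$ must be the primary free parameter.
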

\begin{proof}
First of all, notice that the inequality (\ref{connected-condition-2-mu}) is always satisfied by taking sufficiently large $\beta$ for any fixed $ \alpha, {\ell}_{1}, {\ell}_{2}, g, h, j$. And notice also that there are infinitely many integers $\alpha, \beta,$ for which (\ref{connected-condition-1-mu}) is satisfied. \par
On the other hand, the inequality (\ref{connected-condition-3-mu}) is equivalent to 
\begin{eqnarray}\label{2-connected-condition-3-mu}
c+8\beta - (\frac{128}{27} - 4)(g-1)(h-1)  > 4(j + {\ell}_{1}) + \ell_{2}. 
\end{eqnarray}
where $c:=2\eu(X) + 3\sign(X)$.
Therefore, by (\ref{connected-condition-4-mu}) and (\ref{2-connected-condition-3-mu}), it is enough to prove that there exist infinitely many positive integers $\alpha, \beta, {\ell}_{1}, {\ell}_{2}, g, h$ satisfying
\begin{eqnarray}\label{3-connected-condition-3-mu}
D > 4(j + {\ell}_{1}) + \ell_{2} > E,  
\end{eqnarray}
where we set as
$$
D := c + 8\beta - (\frac{128}{27} - 4)(g-1)(h-1) , \ E:= \frac{1}{3}\Big(c + 8\beta + 4(1-h)(1-g) \Big). 
$$
Notice that both $D$ and $E$ can become sufficiently large positive integers by taking sufficiently large $\beta$. We also have 
\begin{eqnarray*}
D - E = \frac{2}{3}{c} + \frac{16}{3}{\beta} - (\frac{128}{27} + \frac{4}{3}- 4)(g-1)(h-1). 
\end{eqnarray*}
From this, we see that $D - E$ can become a large positive integer by taking large $\beta$. Since there are infinitely many such a $\beta$, we are able to conclude that there are also infinitely many $\ell_{1}, \ell_{2}$ satisfying $D > 4(2 + {\ell}_{1}) + \ell_{2} > E$. From these observations, we are able to obtain the desired result.
\end{proof}

Let us also recall the following definition due to Gromov \cite{G-fil}.
\begin{defn}
Let $X$ be a connected closed manifold of dimension $n$. $X$ is called essential if there exists a map $X \rightarrow K$ to an aspherical complex $K$ that does not contract to the $(n-1)$-skeleton of $K$. 
\end{defn}

It is known that every simply connected manifold is nonessential. Furthermore, a product of arbitrary manifolds with simply connected manifolds is also nonessential. And it is also known that any nonessential manifold has zero volume entropy. \\
Then we have 
\begin{thm}[Theorem 1.1 in \cite{BIS}]\label{mu-BIS}
Let $X$ and $Y$ be two connected closed oriented manifolds. If $Y$ is nonessential, then $\mu(X\# Y) =\mu(X)$.
\end{thm}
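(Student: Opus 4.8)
The plan is to establish the two inequalities $\mu(X\#Y)\ge\mu(X)$ and $\mu(X\#Y)\le\mu(X)$ separately; only the second will use that $Y$ is nonessential. Write $n=\dim X=\dim Y$, $\Gamma=\pi_1(X\#Y)=\pi_1(X)*\pi_1(Y)$, and recall that $\mu(Z)=\inf_g\mu(Z,g)\,\mathrm{vol}(Z,g)^{1/n}$ is scale invariant. The common geometric backbone is the standard ``tree of pieces'' picture of $\widetilde{X\#Y}$ modelled on the Bass--Serre tree of the free product: writing $X_\circ=X\setminus(\text{open ball})$ and $Y_\circ=Y\setminus(\text{open ball})$, the preimage of $X_\circ$ (resp.\ $Y_\circ$) in $\widetilde{X\#Y}$ is a $\Gamma$-orbit of isometrically embedded copies of the universal cover $\widetilde{X_\circ}$ (resp.\ $\widetilde{Y_\circ}$), glued along the lifted connected-sum necks, with combinatorics governed by $\Gamma$. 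Here $\pi_1(X_\circ)=\pi_1(X)$ and $\pi_1(Y_\circ)=\pi_1(Y)$ (dimension $\ge3$), and both embed into $\Gamma$ as the free factors.

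For $\mu(X\#Y)\ge\mu(X)$, let $\bar g$ be any metric on $X\#Y$. Since $\pi_1(X)$ is a free factor of $\Gamma$, each component of the preimage of $X_\circ$ in $\widetilde{X\#Y}$ is an isometric copy of $\widetilde{X_\circ}$. Cap off the $S^{n-1}$-boundary of $(X_\circ,\bar g|_{X_\circ})$ by a standard cap of volume $<\varepsilon$, obtaining a metric $g_X$ on $X$ with $\mathrm{vol}(g_X)\le\mathrm{vol}(\bar g)+\varepsilon$. Then $\widetilde X$ (with the lift of $g_X$) is obtained from this copy of $\widetilde{X_\circ}$ by gluing in a $\pi_1(X)$-equivariant family of small caps, whence
\[
\mathrm{vol}_{g_X}\!\big(B_{\widetilde X}(R)\big)\ \le\ \mathrm{vol}_{\bar g}\!\big(B_{\widetilde{X\#Y}}(R)\big)\ +\ \varepsilon\cdot\#\{\text{caps meeting }B(R)\},
\]
and the number of caps meeting $B(R)$ is at most the number of $\pi_1(X)$-translates of a fixed point lying in $B_{\widetilde{X\#Y}}(R+c)$, hence a bounded multiple of $\mathrm{vol}_{\bar g}(B_{\widetilde{X\#Y}}(R+c))$. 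Applying $\tfrac1R\log(\cdot)$ and $R\to\infty$ gives $\mu(X,g_X)\le\mu(X\#Y,\bar g)$; combining with the volume bound, letting $\bar g$ tend to an optimal metric and $\varepsilon\to0$, yields $\mu(X)\le\mu(X\#Y)$. (Nonessentiality is not needed: a connected sum can only enlarge the universal cover.)

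The reverse inequality is the substantive point. Given $\varepsilon>0$, fix a unit-volume metric $g$ on $X$ with $\mu(X,g)<\mu(X)+\varepsilon$. Nonessentiality of $Y$ is used through the vanishing of its systolic volume: for every $\eta>0$ there is a metric $h_\eta$ on $Y$ with $\mathrm{sys}(Y,h_\eta)\ge1/\eta$ and $\mathrm{vol}(Y,h_\eta)\le\eta$ (equivalently, $Y$ ``collapses'' onto an $(n-1)$-complex), and such an $h_\eta$ can be arranged so that its universal cover has controlled volume growth --- this is exactly the quantitative form of ``nonessential $\Rightarrow\mathrm{MinEnt}=0$''. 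Take $\bar g_\eta=g\mathbin{\#}h_\eta$ along a very short neck, so $\mathrm{vol}(\bar g_\eta)=1+O(\eta)$. Decomposing $\widetilde{X\#Y}$ along its tree of pieces, bound $\mathrm{vol}_{\bar g_\eta}(B_{\widetilde{X\#Y}}(R))$ by summing the volumes of the $\widetilde{X_\circ}$-pieces (bounded diameter, volume $\le1$) and the $\widetilde{Y_\circ}$-pieces (volume $\le\eta$) it meets: a piece at combinatorial depth $2k$ is reached after $k$ crossings of copies of $\widetilde{Y_\circ}$, each crossing costing at least $\mathrm{sys}(Y,h_\eta)\ge1/\eta$, while the number of distinct exits available in any piece within a budget $t$ is at most $C\,e^{(\mu(X)+\varepsilon)t}$ in the $X$-pieces (Milnor's comparison for the cocompact $\pi_1(X)$-action) and, in the $Y$-pieces, bounded by the small volume/growth of $(Y,h_\eta)$. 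A generating-function count over the tree --- i.e.\ the exponential growth rate of $\pi_1(X)*\pi_1(Y)$ with this metric, in which the $\pi_1(Y)$-factor has been stretched to systole $\ge1/\eta$ --- then shows this rate tends to $\mu(X,g)$ as $\eta\to0$, the $Y$-pieces and the extra free-product branching contributing a correction that vanishes with $\eta$. Hence $\mu(X\#Y)\le\mu(X\#Y,\bar g_\eta)\,\mathrm{vol}(\bar g_\eta)^{1/n}\to\mu(X,g)\le\mu(X)+\varepsilon$ as $\eta\to0$; letting $\varepsilon\to0$ gives $\mu(X\#Y)\le\mu(X)$.

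I expect the last estimate to be the main obstacle: $\Gamma=\pi_1(X)*\pi_1(Y)$ is genuinely larger than $\pi_1(X)$, typically of exponential growth even when $\pi_1(X)$ is polynomial, so the crude count of pieces in $\widetilde{X\#Y}$ overshoots, and one must exploit the full strength of nonessentiality --- arbitrarily large systole with arbitrarily small volume, and the resulting essentially lower-dimensional behaviour of the $\pi_1(Y)$-part --- to push the Bass--Serre branching down to the rate coming from $X$ alone. A cleaner but less self-contained route would instead invoke Babenko's theorem that the minimal volume entropy is a homotopy invariant depending only on the image of the fundamental class in $H_n(\pi_1;\mathbb Z)$, together with $H_n(\pi_1(X)*\pi_1(Y);\mathbb Z)=H_n(\pi_1(X);\mathbb Z)\oplus H_n(\pi_1(Y);\mathbb Z)$ for $n\ge2$ and the fact that $Y$ nonessential forces its fundamental class to vanish in $H_n(\pi_1(Y);\mathbb Z)$, so that $c_*[X\#Y]$ sits in the first summand and coincides with $c_*[X]$.
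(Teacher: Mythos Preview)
This theorem is not proved in the present paper; it is quoted as Theorem~1.1 of \cite{BIS} and used as a black box in the proof of Proposition~\ref{prop-FZZ-HTin-mu}. There is therefore no proof here to compare your attempt against.

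On its own merits: your direct geometric argument is a plausible outline, but --- as you yourself acknowledge --- the hard direction $\mu(X\#Y)\le\mu(X)$ does not close. The Bass--Serre tree for $\pi_1(X)*\pi_1(Y)$ has genuine exponential branching whenever $\pi_1(Y)\neq 1$, and ``pushing the branching down'' by taking $\mathrm{sys}(Y,h_\eta)\to\infty$ with $\mathrm{vol}(Y,h_\eta)\to 0$ is precisely the point that needs a real argument; your generating-function remark is a heuristic, not a proof, and even granting systolic freedom for nonessential $Y$, you would still need uniform control on the volume growth of $(\widetilde Y,\widetilde{h_\eta})$ as $\eta\to 0$, which you have not established. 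The first inequality also needs more care: the intrinsic distance in a copy of $\widetilde{X_\circ}\subset\widetilde{X\#Y}$ need not coincide with the ambient distance (geodesics may shortcut through the $Y$-pieces), so your displayed volume comparison is not immediate.

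The alternative you sketch in your last paragraph --- Babenko's theorem, in the form extended by Brunnbauer, that minimal volume entropy depends only on the image of the fundamental class under the classifying map to $K(\pi_1,1)$, combined with the splitting $H_n(\pi_1(X)*\pi_1(Y))\cong H_n(\pi_1(X))\oplus H_n(\pi_1(Y))$ for $n\ge 2$ and the vanishing of the $\pi_1(Y)$-component when $Y$ is nonessential --- is essentially the argument carried out in \cite{BIS}. That is the route to make rigorous; the direct geometric approach, as you suspected, does not go through without it.
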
 

By Lemma \ref{mu-lem-2} and Theorem \ref{mu-BIS}, we get 

\begin{prop}\label{prop-FZZ-HTin-mu}
Let $X_{m}$ be a nonessential closed oriented smooth $4$-manifold and consider the following connected sum
\begin{eqnarray*}
M:=X \# X_{\alpha, \beta} \# (\Sigma_{h} \times \Sigma_{g}) \# \ell_{1}({S}^{1} \times {S}^{3}) \# \ell_{2} \overline{{\mathbb C}{P}^{2}}.
\end{eqnarray*}
Then, there are infinitely many integers $\alpha, \beta, g, h, {\ell}_{1}, {\ell}_{2}$ for which the following conditions are satisfied simultaneously: 
\begin{eqnarray}\label{connected-condition-1-mu-mu}
4\alpha+2\beta-1 \equiv 3 \ (\bmod \ 4),  
\end{eqnarray}
\begin{eqnarray}\label{Hit-Tho-1-mu}
2\eu(M) - 3|\sign(X)| > \frac{1}{54{\pi}^2}{\mu}(X)^{4} \not= 0,  
\end{eqnarray}
\begin{eqnarray}\label{Hit-Tho-2-mu}
4(j+\ell_{1}) + \ell_{2} > \frac{1}{3}\Big(2\eu(X)+ 3\sign(X) + 8 \beta +4(1-h)(1-g) \Big).  
\end{eqnarray}
\end{prop}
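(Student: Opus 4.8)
The plan is to deduce the three displayed conditions from Lemma \ref{mu-lem-2} together with the volume entropy additivity of Theorem \ref{mu-BIS}, in essentially the same way that Proposition \ref{prop-FZZ-HTin} was deduced from Lemma \ref{cosimplicial-lem-2} and Lemma \ref{simplicial-lem}. First I would apply Lemma \ref{mu-lem-2} to the manifold $M = X \# X_{\alpha,\beta} \# (\Sigma_h \times \Sigma_g) \# \ell_1(S^1\times S^3) \# \ell_2\overline{\mathbb{C}P^2}$, which produces infinitely many tuples $(\alpha,\beta,g,h,\ell_1,\ell_2)$ satisfying (\ref{connected-condition-1-mu})--(\ref{connected-condition-4-mu}). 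Condition (\ref{connected-condition-1-mu-mu}) is then literally (\ref{connected-condition-1-mu}), and condition (\ref{Hit-Tho-2-mu}) is literally (\ref{connected-condition-4-mu}), using the values $2\eu(X_{\alpha,\beta}) + 3\sign(X_{\alpha,\beta}) = 8\beta$ from (\ref{go-3}); so only (\ref{Hit-Tho-1-mu}) requires genuine work.

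For (\ref{Hit-Tho-1-mu}), the first step is to compute $\mu(M)$. Since $X_{\alpha,\beta}$ is simply connected it is nonessential, and $\Sigma_h \times \Sigma_g$, $S^1\times S^3$, and $\overline{\mathbb{C}P^2}$ are each nonessential as well (the surface product has nonzero but vanishing-in-the-infimum entropy — more simply, in the connected-sum statement one uses that all these pieces are nonessential by the remarks preceding Theorem \ref{mu-BIS}, or one absorbs them one at a time). Applying Theorem \ref{mu-BIS} repeatedly, I would conclude $\mu(M) = \mu(X)$, so that $\mu(M) \neq 0$ precisely when $\mu(X) \neq 0$; note that the hypothesis forces $X$ to be an essential $4$-manifold with $\mu(X)\neq 0$, which is implicitly part of the setup (mirroring the $\|X\|\neq 0$ assumption in Proposition \ref{prop-FZZ-HTin}). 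Then the right-hand side of (\ref{Hit-Tho-1-mu}) equals $\frac{1}{54\pi^2}\mu(X)^4$, which is a fixed positive constant independent of $g,h,\ell_1,\ell_2,\alpha,\beta$.

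For the left-hand side of (\ref{Hit-Tho-1-mu}), I would expand $2\eu(M) - 3\sign(M)$ and $2\eu(M) + 3\sign(M)$ using the connected-sum formulas (as in Lemma \ref{simplicial-lem}, with $k=1$ and with $X_{\alpha,\beta}$ playing the role of one summand), substituting the Gompf values (\ref{go-1})--(\ref{go-4}); in particular $2\eu(X_{\alpha,\beta}) \pm 3\sign(X_{\alpha,\beta})$ contributes $8\beta$ and $8(12\alpha+\beta)$ respectively, both of which can be made arbitrarily large. Translating (\ref{connected-condition-2-mu}) gives $2\eu(M) - 3\sign(M) > \frac{1}{54\pi^2}\mu(X)^4$ after noticing that $\frac{128}{27} = \frac{n^{n/2}}{n!}\cdot(\text{const})$ with $n=4$ encodes the constant relating simplicial volume and volume entropy via $\frac{4^2}{4!}\|\Sigma_h\times\Sigma_g\| \geq \mu(\Sigma_h\times\Sigma_g)^4$, i.e. $\frac{128}{27}$ is exactly $\frac{2}{3}\cdot\frac{24}{12}\cdot(g-1)(h-1)$-type bookkeeping; similarly (\ref{connected-condition-3-mu}) gives $2\eu(M) + 3\sign(M) > \frac{1}{54\pi^2}\mu(X)^4$. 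Taking the minimum of the two sign choices yields $2\eu(M) - 3|\sign(M)| > \frac{1}{54\pi^2}\mu(X)^4$, which is (\ref{Hit-Tho-1-mu}).

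The main obstacle I anticipate is pinning down the correct constant $\frac{128}{27}$ appearing in Lemma \ref{mu-lem-2}: it must be exactly the coefficient needed so that the quantity $(\tfrac{128}{27}-4)(g-1)(h-1)$ absorbs the gap between $\mu(M)^4 = \mu(X)^4$ (which does \emph{not} grow with $g,h$, since the surface factor is nonessential) and the naive simplicial-volume bound $\frac{24}{54\pi^2}(g-1)(h-1)$ that one would have written in the $\|\cdot\|$-version. Concretely, because $\mu(\Sigma_h\times\Sigma_g)=0$ as a summand contribution while $2\eu - 3|\sign|$ still gains $4(g-1)(h-1)$, the surface factor only \emph{helps} the inequality, and the role of the $\frac{128}{27}$ term is purely to keep the simultaneous system (\ref{connected-condition-2-mu})--(\ref{connected-condition-4-mu}) consistent — verifying $D - E \to \infty$ as done at the end of Lemma \ref{mu-lem-2}. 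Once that algebra is in hand, the deduction of (\ref{Hit-Tho-1-mu})--(\ref{Hit-Tho-2-mu}) is immediate, exactly parallel to Proposition \ref{prop-FZZ-HTin}.
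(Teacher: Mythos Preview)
Your argument has a genuine error at the core step: you have misidentified which summand of $M$ is essential. The product $\Sigma_h \times \Sigma_g$ with $g,h \geq 2$ is aspherical (its universal cover is $\mathbb{H}^2 \times \mathbb{H}^2$), hence essential via the identity map; it is certainly \emph{not} nonessential. Conversely, the hypothesis of the proposition (despite the typo $X_m$ for $X$) is precisely that $X$ is \emph{nonessential}, not essential as you assert. So when you apply Theorem \ref{mu-BIS} repeatedly, the surviving summand is $\Sigma_h \times \Sigma_g$, and the correct conclusion is
\[
\mu(M) = \mu(\Sigma_h \times \Sigma_g),
\]
not $\mu(M)=\mu(X)$. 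In particular the right-hand side of (\ref{Hit-Tho-1-mu}) is \emph{not} a fixed constant independent of $g,h$; it grows with $g,h$. (The ``$\mu(X)$'' appearing in the displayed statement is a typo for $\mu(M)$, as the paper's own proof makes clear.)

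This also explains the constant $\tfrac{128}{27}$ in Lemma \ref{mu-lem-2}, which you were unable to pin down. One has the two-sided estimate
\[
16(g-1)(h-1) \ \leq\ \mu(\Sigma_h \times \Sigma_g)^4 \ \leq\ 256\pi^2(g-1)(h-1),
\]
and dividing the upper bound by $54\pi^2$ gives exactly $\tfrac{128}{27}(g-1)(h-1)$. Thus (\ref{connected-condition-3-mu}) and (\ref{connected-condition-2-mu}) are designed so that, after the connected-sum bookkeeping from Lemma \ref{simplicial-lem}, they read $2\eu(M)\pm 3\sign(M) > \tfrac{128}{27}(g-1)(h-1) \geq \tfrac{1}{54\pi^2}\mu(M)^4$; combining the two sign choices yields (\ref{Hit-Tho-1-mu}). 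The nonvanishing $\mu(M)\neq 0$ then comes from the \emph{lower} bound above, valid for all $g,h\geq 2$. Once you reverse the roles of $X$ and $\Sigma_h\times\Sigma_g$ in your entropy computation, the rest of your outline goes through and matches the paper's argument.
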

\begin{proof}
First of all, notice that $\overline{{\mathbb C}{P}^{2}}$ and ${S}^{1} \times {S}^{3}$ is nonessential (see also \cite{BIS}). $X$ is also nonessential by the assumption. Therefore, by Theorem \ref{mu-BIS}, we have 
$$
\mu(M) = \mu(\Sigma_{h} \times \Sigma_{g}).
$$
Moreover, Corollary 2.2 in \cite{BIS} tells us that we also have
$$
16(g-1)(h-1) \leq \mu(\Sigma_{h} \times \Sigma_{g})^{4} \leq 256{\pi}^{2}(g-1)(h-1). 
$$ 
Therefore, we obtain
\begin{eqnarray}\label{bound-mu-1}
\frac{16}{54{\pi}^{2}}(g-1)(h-1) \leq \frac{1}{54{\pi}^{2}}\mu(M)^{4} \leq \frac{127}{27}(g-1)(h-1) 
\end{eqnarray}
This particularly tells us that $\mu(M)^{4} \not=0$ whenever $g,h \geq 2$. \par
On the other hand, notice that (\ref{Hit-Tho-2-mu}) is nothing but (\ref{connected-condition-4-mu}). Moreover, by Lemma \ref{simplicial-lem}, we have 
\begin{eqnarray*}
2\eu(M)+3\sign(M) &=& 2\eu(X)+3\sign(X) + 8 \beta +4(g-1)(h-1)\\
&-&4(j+\ell_{1})-{\ell}_{2}. 
\end{eqnarray*}
Therefore, the inequality (\ref{connected-condition-3-mu}) is nothing but 
\begin{eqnarray}\label{Hit-Tho-11-mu}
2\eu(M) + 3\sign(X) > \frac{1}{54{\pi}^2}{\mu}(X)^{4}.
\end{eqnarray}
Similarly, since Lemma \ref{simplicial-lem} also tells us that 
\begin{eqnarray*}
2\eu(M)-3\sign(M) &=& 2\eu(X)-3\sign(X) + 8 (12\alpha + \beta) + 4(g-1)(h-1)\\
&-&4(j+\ell_{1})+5{\ell}_{2}, 
\end{eqnarray*}
the inequality (\ref{connected-condition-2-mu}) is equivalent to 
\begin{eqnarray}\label{Hit-Tho-12-mu}
2\eu(M) - 3\sign(X) >  \frac{1}{54{\pi}^2}{\mu}(X)^{4}. 
\end{eqnarray}
By (\ref{Hit-Tho-11-mu}) and (\ref{Hit-Tho-12-mu}), we obtain (\ref{Hit-Tho-1-mu}). 
\end{proof}

Finally, we obtain 
\begin{main}\label{main-EE-mu}
Let $X$ be a BF-admissible, nonessential closed oriented smooth $4$-manifold, $X_{\alpha, \beta}$ is the Gompf manifold with degree $(\alpha, \beta)$ and consider the following connected sum:
\begin{eqnarray*}
M:=X \# X_{\alpha, \beta} \# (\Sigma_{h} \times \Sigma_{g}) \# \ell_{1}({S}^{1} \times {S}^{3}) \# \ell_{2} \overline{{\mathbb C}{P}^{2}}
\end{eqnarray*}
where ${\ell}_{1}, {\ell}_{2} \geq 1$ and $g, h \geq 3$ are odd integers. And $\alpha \geq 2$ and $\beta \geq 0$. Then, there are infinitely many integers $\alpha, \beta, g, h, {\ell}_{1}, {\ell}_{2}$ for which $M$ has property $\mu$ in the sense of Definition \ref{def-property-mu}. 
\end{main}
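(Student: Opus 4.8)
The strategy is to run the argument of Theorem \ref{main-existence-thm} with Gromov's simplicial volume everywhere replaced by the volume entropy $\mu$, feeding in the entropy analogues already assembled in the excerpt — Theorem \ref{mu-BIS} and Proposition \ref{prop-FZZ-HTin-mu} — in place of their simplicial-volume counterparts. The first thing to record is that, apart from the low-$b^+$ piece, $M$ is a connected sum of three BF-admissible summands: $X$ is BF-admissible by hypothesis; the Gompf manifold $X_{\alpha,\beta}$ is BF-admissible as soon as $4\alpha+2\beta-1\equiv 3\ (\bmod\ 4)$, as noted before Lemma \ref{mu-lem-2}; and $\Sigma_h\times\Sigma_g$ is BF-admissible for odd $g,h\geq 3$ by Theorem \ref{main-A}. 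Writing $N:=\ell_1(S^1\times S^3)\#\ell_2\CPb$ we have $b^+(N)=0$, so $M=(X\# X_{\alpha,\beta}\#(\Sigma_h\times\Sigma_g))\# N$ is exactly the type of connected sum to which Theorem \ref{main-CCC}, Theorem \ref{Ricci-non-sin}, and Corollary \ref{speical-ein} apply with $n=3$, provided $\sum_m c_1^2(X_m)>0$. Since $c_1^2(X_{\alpha,\beta})=8\beta\geq 0$ by (\ref{go-3}) and $c_1^2(\Sigma_h\times\Sigma_g)=2\eu(\Sigma_h\times\Sigma_g)=8(g-1)(h-1)\to\infty$, this positivity is automatic once $g,h$ are large, so I would simply impose that $c_1^2(X)+8\beta+8(g-1)(h-1)>0$.

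Next I would verify condition $1$ of property $\mu$. As $X$ is nonessential by hypothesis, $X_{\alpha,\beta}$ is nonessential (it is simply connected), and $S^1\times S^3$ and $\CPb$ are nonessential, iterating Theorem \ref{mu-BIS} gives $\mu(M)=\mu(\Sigma_h\times\Sigma_g)$; by the sandwich estimate $16(g-1)(h-1)\le\mu(\Sigma_h\times\Sigma_g)^4\le 256\pi^2(g-1)(h-1)$ (Corollary $2.2$ of \cite{BIS}, recalled in Proposition \ref{prop-FZZ-HTin-mu}) this is nonzero for $g,h\geq 2$. The required strict inequality $2\eu(M)-3|\sign(M)|>\frac{1}{54\pi^2}\mu(M)^4$ is precisely (\ref{Hit-Tho-1-mu}) of Proposition \ref{prop-FZZ-HTin-mu}, which holds for infinitely many admissible tuples $(\alpha,\beta,g,h,\ell_1,\ell_2)$.

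For condition $2$ I would first apply Theorem \ref{main-CCC} with $k=1$: since the three summands are BF-admissible, $b^+(N)=0$, and $\sum_m c_1^2(X_m)>0$, it yields $\bar\lambda(M)=\bar\lambda_1(M)\le -4\pi\sqrt{2\sum_m c_1^2(X_m)}<0$. For the nonexistence of a quasi-non-singular solution of the normalized Ricci flow on $M$ for any initial metric, I would invoke Theorem \ref{Ricci-non-sin} (equivalently Corollary \ref{speical-ein}), which applies once the obstruction inequality (\ref{Hit-Tho-2-mu})$=$(\ref{connected-condition-4-mu}) holds; Proposition \ref{prop-FZZ-HTin-mu} supplies this simultaneously with the inequality of condition $1$ and with the congruence $4\alpha+2\beta-1\equiv 3\ (\bmod\ 4)$. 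Combining the two paragraphs, $M$ has property $\mu$, and since Proposition \ref{prop-FZZ-HTin-mu} produces infinitely many such tuples, the theorem follows.

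The only delicate point — and where essentially all the content lies — is the \emph{simultaneous} solvability, with infinitely many solutions, of the four competing constraints: the BF-admissibility congruence on $(\alpha,\beta)$, the positivity $\sum_m c_1^2(X_m)>0$, the strict Hitchin--Thorpe--Bishop inequality (bounding $\mu(M)^4\sim(g-1)(h-1)$ from above against $2\eu(M)-3|\sign(M)|$), and the Ricci-flow obstruction inequality. This balancing is carried out in Lemma \ref{mu-lem-2} and Proposition \ref{prop-FZZ-HTin-mu}; the mechanism is that sending $\beta\to\infty$ raises $2\eu(M)\pm 3\sign(M)$ via $c_1^2(X_{\alpha,\beta})=8\beta$ strictly faster than the volume-entropy term grows, $\ell_2$ can be enlarged to absorb the slack in (\ref{connected-condition-2-mu}), and $g,h$ are taken odd and large. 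Once Proposition \ref{prop-FZZ-HTin-mu} is in hand the remainder is bookkeeping, so I anticipate no further obstacle.
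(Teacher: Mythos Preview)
Your proposal is correct and follows essentially the same approach as the paper: invoke Proposition \ref{prop-FZZ-HTin-mu} to obtain infinitely many tuples satisfying the congruence (\ref{connected-condition-1-mu-mu}), the entropy Hitchin--Thorpe inequality (\ref{Hit-Tho-1-mu}), and the obstruction inequality (\ref{Hit-Tho-2-mu}); then use BF-admissibility of the three summands together with Corollary \ref{speical-ein} for non-existence of quasi-non-singular solutions and Theorem \ref{main-CCC} (the paper cites its repackaging as Corollary \ref{cor-perel}) for $\bar\lambda(M)<0$. Your write-up is in fact more explicit than the paper's about the positivity hypothesis $\sum_m c_1^2(X_m)>0$ and about why $\mu(M)\neq 0$, but the logical skeleton is identical.
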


\begin{proof}
By Proposition \ref{prop-FZZ-HTin-mu}, there are infinitely many integers $\alpha, \beta, g, h, {\ell}_{1}, {\ell}_{2}$ for which (\ref{connected-condition-1-mu-mu}), (\ref{Hit-Tho-1-mu}) and (\ref{Hit-Tho-2-mu}) hold. Notice that $X_{\alpha, \beta}$ is BF-admissible under (\ref{connected-condition-1-mu-mu}). Since $X$ is also BF-admissible, under (\ref{Hit-Tho-2-mu}), there is no quasi-non-singular solution to the normalized Ricci flow on $M$ for any initial metric by Corollary \ref{speical-ein}. Moreover, we also obtain $\bar{\lambda}(M) < 0$ by Corollary \ref{cor-perel}.
\end{proof}

By considering the sequence of homotopy $K3$ surface used to prove Theorem \ref{main-AcCC}, we also get immediately the following result.
\begin{main}\label{main-EEE-mu}
Consider the following connected sum
\begin{eqnarray*}
M:=K3 \# X_{\alpha, \beta} \# (\Sigma_{h} \times \Sigma_{g}) \# \ell_{1}({S}^{1} \times {S}^{3}) \# \ell_{2} \overline{{\mathbb C}{P}^{2}}
\end{eqnarray*}
where ${\ell}_{1}, {\ell}_{2} \geq 1$ and $g, h \geq 3$ are odd integers. And $\alpha \geq 2$ and $\beta \geq 0$. Then, there are infinitely many integers $\alpha, \beta, g, h, {\ell}_{1}, {\ell}_{2}$ for which $M$ has $\mu -\infty$ property in the sense of Definition \ref{def-property-mu}. 
\end{main}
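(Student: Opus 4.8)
The plan is to obtain this as a corollary of Theorem~\ref{main-EE-mu} combined with the sequence of homotopy $K3$ surfaces $Y_{m}$ used in the proof of Theorem~\ref{main-AcCC}. First I would record that $K3$ is BF-admissible in the sense of Definition~\ref{BF-adm-def}: it is a simply connected spin K\"{a}hler surface with $b^{+}=3\equiv 3\pmod 4$, the canonical spin${}^{c}$ structure has Seiberg--Witten invariant $\pm 1$ and $c_{1}^{2}=0=2\eu(K3)+3\sign(K3)$, and condition~3 of Definition~\ref{BF-adm-def} is vacuous since $b_{1}=0$; moreover $K3$ is nonessential, being simply connected. Applying Theorem~\ref{main-EE-mu} with $X=K3$ therefore exhibits infinitely many tuples $(\alpha,\beta,g,h,\ell_{1},\ell_{2})$ for which $M$ has property $\mu$; in particular $M$ satisfies condition~1 of Definition~\ref{def-property-mu}, that is $\mu(M)\neq 0$ and $2\eu(M)-3|\sign(M)|>\tfrac{1}{54\pi^{2}}\mu(M)^{4}$, a purely topological statement since $\mu(M)=\mu(\Sigma_{h}\times\Sigma_{g})$ depends only on $g,h$ and $\eu,\sign$ are homotopy invariants. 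What remains is to upgrade ``at least one'' to ``infinitely many'' in the number of smooth structures on $M$ with negative $\bar{\lambda}$ and no quasi-non-singular normalized Ricci flow.

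Fix a tuple $(\alpha,\beta,g,h,\ell_{1},\ell_{2})$ valid for $X=K3$ and set
\[
Z(m):=Y_{m}\,\#\,X_{\alpha,\beta}\,\#\,(\Sigma_{h}\times\Sigma_{g})\,\#\,\ell_{1}(S^{1}\times S^{3})\,\#\,\ell_{2}\CPb,\qquad m\in\mathbb{N}.
\]
Since each $Y_{m}$ is homeomorphic to $K3$, each $Z(m)$ is homeomorphic to $M$. Each $Y_{m}$ is a simply connected spin K\"{a}hler surface with the same Euler characteristic and signature as $K3$, hence is itself BF-admissible and nonessential; and the hypotheses of Theorem~\ref{main-EE-mu}, for a fixed tuple, involve $X$ only through $\eu(X)$, $\sign(X)$ and its nonessentiality. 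Consequently Theorem~\ref{main-EE-mu} applies to $X=Y_{m}$ with the same tuple, so each $Z(m)$ has property $\mu$; in particular $\bar{\lambda}(Z(m))<0$ and $Z(m)$ admits no quasi-non-singular solution to the normalized Ricci flow for any initial metric. (Equivalently, one may invoke Corollary~\ref{cor-perel} and Corollary~\ref{speical-ein} directly, applied to the three BF-admissible summands $Y_{m},X_{\alpha,\beta},\Sigma_{h}\times\Sigma_{g}$ and to $N=\ell_{1}(S^{1}\times S^{3})\,\#\,\ell_{2}\CPb$ with $b^{+}(N)=0$, using the inequalities supplied by Proposition~\ref{prop-FZZ-HTin-mu}.)

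It then remains to see that $\{Z(m)\}_{m\in\mathbb{N}}$ contains infinitely many diffeomorphism types, and here I would argue exactly as in the proof of Theorem~\ref{main-AcCC}: since $Y_{m},X_{\alpha,\beta},\Sigma_{h}\times\Sigma_{g}$ are three BF-admissible pieces and $b^{+}(N)=0$, Theorem~\ref{new-BF-non-vanishing} forces the monopole set $\mathfrak{C}(Z(m))$ to contain all classes $\pm c_{1}(Y_{m})\pm c_{1}(X_{\alpha,\beta})\pm c_{1}(\Sigma_{h}\times\Sigma_{g})+\sum_{i}\pm E_{i}$ with $E_{i}$ a diagonalizing basis of $H^{2}(N;\mathbb{Z})/\mathrm{tors}$; were only finitely many diffeomorphism types to occur, composing with fixed diffeomorphisms onto some $Z(m_{0})$ and letting $m\to\infty$ would make $\mathfrak{C}(Z(m_{0}))$ unbounded, contradicting Proposition~\ref{mono-bounded}. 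Passing to an infinite pairwise non-diffeomorphic subfamily of $\{Z(m)\}$ sitting on the single topological manifold $M$ and combining with condition~1 gives the $\mu-\infty$ property. The argument is essentially a packaging of results already in hand; the only points I would treat with care are (i) that replacing $K3$ by $Y_{m}$ perturbs none of the numerical data entering Proposition~\ref{prop-FZZ-HTin-mu}, which is immediate because $Y_{m}$ and $K3$ share $\eu$, $\sign$, simplicial volume and volume entropy, and (ii) the unboundedness of the canonical classes $c_{1}(Y_{m})$ of the order-$(2m+1)$ logarithmic transforms, which is the single genuine geometric input and is standard. I do not expect a real obstacle beyond (ii).
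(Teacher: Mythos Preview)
Your proposal is correct and follows essentially the same approach as the paper, which disposes of the result in a single sentence: ``By considering the sequence of homotopy $K3$ surfaces used to prove Theorem~\ref{main-AcCC}, we also get immediately the following result.'' You have simply spelled out what that sentence means---applying Theorem~\ref{main-EE-mu} with $X=K3$ (resp.\ $X=Y_m$), noting that the numerical hypotheses depend only on $\eu(X)$, $\sign(X)$ and nonessentiality, and then running the monopole-class finiteness argument from the proof of Theorem~\ref{main-AcCC} to separate diffeomorphism types.
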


\vspace{0.6in}

\vfill

{\footnotesize 
\noindent
{R. \.Inan\c{c} Baykur}\\
{Department of Mathematics,  
Brandeis University, USA}\\
{\sc e-mail}: baykur@brandeis.edu \\

{\footnotesize 
\noindent
{Masashi Ishida}\\
{Department of Mathematics,  
Sophia University, Japan }\\
{\sc e-mail}: ishida-m@hoffman.cc.sophia.ac.jp\\


\begin{thebibliography}{10}

\bibitem{ABBKP} A.~Akhmedov, S. Baldridge, R.\,I. Baykur, P.~Kirk, and D.~Park, {\em ``Simply connected minimal symplectic $4$-manifolds with signature less than $-1$'',}  J. Eur. Math. Soc. (JEMS)  {\bf 12} (2010),  133--161.
\bibitem{ABP} A.~Akhmedov, R.\,I.~Baykur, and D.~Park, {\em ``Constructing infinitely many smooth structures on small $4$-manifolds'',} J. Topol. {\bf 2} (2008), 409--428.
\bibitem{ADK} D.~Auroux, S.K. Donaldson, and L.~Katzarkov, {\em ``Luttinger surgery along Lagrangian tori and non-isotopy for singular symplectic plane curves''}, Math. Ann. {\bf 326} (2003), 185--203. 
\bibitem{A-P-inv} D.~Akhmedov and B.D.~Park, {\em ``Exotic smooth structures on small $4$-manifolds with odd signatures''}, Invent. Math. {\bf 181} (2010), 577--603.
\bibitem{A-ishi-leb-3} K.~Akutagawa, M.~Ishida and C.~LeBrun, {\em ``Perelman's invariant, Ricci flow, and the Yamabe invariants of smooth manifolds''}, Arch. Math. {\bf 88} (2007), 71--76. 

\bibitem{BPV} W.~P. Barth,  A.~M. Peters, and A. Van~de Ven, {\em ``Compact complex surfaces''}, second ed., Springer-Verlag (2004).
\bibitem{b-1} S.~Bauer, {\em ``Stable cohomotopy refinement of Seiberg-Witten invariants: II'',} Invent. Math. {\bf 155} (2004), 21--40.
\bibitem{b} S.~Bauer,  {\em ``Refined Seiberg-Witten invariants'',} Different faces of geometry, 1--46, Int. Math. Ser. (N. Y.), 3, Kluwer/Plenum, New York, (2004).
\bibitem{b-f} S.~Bauer and M.~Furuta, {\em ``Stable cohomotopy refinement of Seiberg-Witten invariants: I'',} Invent. Math. {\bf 155} (2004), 1--19.
\bibitem{be} A.~Besse, {\em ``Einstein manifolds'',} Springer-Verlag (1987).
\bibitem{BIS} M. Brunnbauer, M. Ishida, and P. Su\'arez-Serrato, {\em ``An essential relation between Einstein metrics, volume entropy, and exotic smooth structures''}, Math. Res. Lett. \textbf{16} (2009), 503--514.
\bibitem{Bucher-Karlsson(2007)} M. Bucher-Karlsson, {\em ``The simplicial volume of closed manifolds covered by $\mathbb{H}^2 \times \mathbb{H}^2$''}, J. Topol. {\bf 1} (2008), 584--602.

\bibitem{c-c} H.-D.~Cao and B.~Chow, {\em ``Recent developments on the Ricci flow''}, Bull. Amer. Math. Soc. (N.S) {\bf 36} (1995), 59-74.
\bibitem{cao-X} X.-D.~Cao, {\em ``Eigenvalues of $(-\Delta + \frac{R}{2})$ on manifolds with nonnegative curvature operator''}, Math. Ann. {\bf 337} (2007), 435--441. 
\bibitem{c-z} B.-L.~Chen and X.-P.~Zhu, {\em ``Ricci flow with surgery on four-manifolds with positive isotropic curvature''}, J. Differential Geom. {\bf 74} (2006), 177--264. 

\bibitem{fz-1} F.~Fang, Y.~Zhang and Z.~Zhang, {\em ``Non-singular solutions to the normalized Ricci flow equation''},  Math. Ann. {\bf 340} (2008), 647--674.
\bibitem{freedman} M. H.~ Freedman, {\em ``The topology of four-dimensional manifolds''}, J. Differential Geom. \textbf{17} (1982), 357--453.
\bibitem{furuta} M. Furuta, {\em ``Monopole equation and the $\frac{11}{8}$-conjecture''}, Math. Res. Lett. {\bf 8} (2001), 293--301.

\bibitem{geiges} H. Geiges, {\em ``Symplectic structures on $T^2$-bundles over $T^2$''}, Duke Math. J. {\bf 67} (1992), 539--555.
\bibitem{gom} R.E.~Gompf, {\em ``A new construction of symplectic $4$-manifolds''}, Ann. of Math. {\bf 142} (1995), 527--595.
\bibitem{gromov} M.~Gromov, {\em ``Volume and bounded cohomology''}, Publ. Math. I.H.E.S. {\bf 56} (1982), 5--99.
\bibitem{G-fil} M.~Gromov, {\em ``Filling Riemannian manifolds''}, J. Differential Geom. {\bf 18}, (1983), 1--147.
\bibitem{G-L} M.~Gromov and H.~B.~Lawson, {\em ``The classification of simply connected manifolds of positive scalar curvature''}, Ann. of Math. {\bf 111}, (1980) 423--434. 

\bibitem{h-k} I.~Hambleton and M.~Kreck, {\em ``Cancellation, elliptic surfaces and the topology of certain four-manifolds''}, J. Reine Angew. Math. {\bf 444} (1993), 79--100. 
\bibitem{h-t} I.~Hambleton and P.~Teichner, {\em ``A non-extended hermitian form over $\mathbb Z [\mathbb Z]$''}, Manuscripta. Math.  {\bf 93} (1997), 435--442. 
\bibitem{ha-0} R.~Hamilton, {\em ``Three manifolds with positive Ricci curvature''}, J. Differential Geom. {\bf 17} (1982), 255--306.  
\bibitem{ha-2} R.~Hamilton, {\em ``Four-manifolds with positive curvature operator''}, J. Differential Geom. {\bf 24} (1986), 153--179. 
\bibitem{ha-1}R.~Hamilton, {\em ``Non-singular solutions of the Ricci flow on three-manifolds''}, Comm. Anal. Geom. {\bf 7} (1999), 695--729.
\bibitem{h-ko} M. J. D.~Hamilton and D.~Kotschick, {\em ``Minimality and irreducibility of symplectic four-manifolds''}, Int. Math. Res. Not. 2006, Art. ID 35032.
\bibitem{hit} N.J.~Hitchin, {\em ``Compact four-dimensional Einstein manifolds'',} J. Differential Geom. {\bf 9} (1974), 435--441. 
\bibitem{hl} C-I. Ho and T-J. Li, {\em ``Luttinger surgery and Kodaira dimension''}, preprint.

\bibitem{ishi} M.~Ishida, {\em ``The normalized Ricci flow on four-manifolds and exotic smooth structures''}, arXiv:0807.2169 (2008).
\bibitem{ishi-leb-1} M.~Ishida and C.~LeBrun, {\em ``Spin manifolds, Einstein metrics, and Differential topology''}, Math. Res. Lett. {\bf 9} (2002), 229--240.
\bibitem{ishi-leb-2} M.~Ishida and C.~LeBrun, {\em ``Curvature, connected sums, and Seiberg-Witten theory''}, Comm. Anal. Geom. {\bf 11} (2003), 809--836.
\bibitem{ishi-1} M.~Ishida and H.~Sasahira, {\em ``Stable cohomotopy Seiberg-Witten invariants of connected sums of four-manifolds with positive first Betti number''}, arXiv:0804.3452 (2008).

\bibitem{lott} B.~Kleiner and J.~Lott, {\em ``Notes on Perelman's papers''},  Geom. Topol. {\bf 12} (2008), 2587--2855.
\bibitem{kob} O.~Kobayashi, {Scalar curvature of a metric of unit volume}, Math. Ann. {\bf 279} (1987), 253--265.
\bibitem{kro} P.B.~Kronheimer, {\em ``Minimal genus in ${S}^1 \times M$''}, Invent. Math. {\bf 135} (1999), 45--61.
\bibitem{kru-Lee} V.~Krushkal and R.~Lee, {\em ``Surgery on closed $4$-manifolds with free fundamental group''}, Math. Proc. Camb. Phil. Soc. {\bf 133} (2002), 305--310.

\bibitem{leb-44} {C. LeBrun}, {\em ``Four-manifolds without Einstein metrics''}, Math. Res. Lett. {\bf 3} (1996), 133--147. 
\bibitem{leb-3} {C. LeBrun}, {\em ``Kodaira dimension and the Yamabe problem''}. Comm. An. Geom. {\bf 7} (1999), 133--156.
\bibitem{leb-11} {C. LeBrun}, {\em ``Ricci curvature, minimal volumes, and Seiberg-Witten theory''}. Invent. Math. {\bf 145} (2001), 279--316. 
\bibitem{leb-17} {C. LeBrun}, {\em ``Four-manifolds, curvature bounds, and convex geometry''}, in Riemannian Topology and Geometric Structures on Manifolds, K. Galicki and S.R. Simanca, editors, Birkhauser, (2009).
\bibitem{li} {J.-F.~Li}, {\em ``Eigenvalues and energy functionals with monotonicity formulae under Ricci flow''}, Math. Ann. {\bf 338} (2007),  927--946.
\bibitem{Lut} {K.M.~Luttinger}, {\em ``Lagrangian tori in ${\mathbb R}^{4}$''}, J. Differential Geom. {\bf 52} (1999), 203--222.

\bibitem{man} {A. Manning,} {\em ``Topological entropy for geodesic flows''}, Ann, of Math. {\bf 110} (1979), 567--573.
\bibitem{mil} {J. Milnor,} {\em ``A note on curvature and the fundamental group''}, J. Differential Geom. {\bf 2} (1968), 1--7.

\bibitem{o-s-w} T.~Oliynyk, V.~Suneeta and E.~Woolgar, {\em ``Irreversibility of world-sheet Renormalization group flow'',} arXiv: hep-th/0410001 v3 (2005).

\bibitem{p-1} {G.~Perelman}, {\em ``The entropy formula for the Ricci flow and its geometric applications''}, arXiv: math.DG/0211159 (2002). 
\bibitem{p-2} {G.~Perelman}, {\em ``Ricci flow with surgery on three-manifolds''}, arXiv: math.DG/0303109 (2003).
\bibitem{p-3} {G.~Perelman}, {\em ``Finite extinction time for the solutions to the Ricci flow on certain three-manifolds''}, arXiv: math.DG/0307245 (2003).

\bibitem{RT} R.~Torres, {\em ``On the geography and botany of irreducible $4$-manifolds with abelian fundamental group,''} arXiv:0903.5503 (2009).

\bibitem{sch-2} {R.~Schoen}, {\em ``Variational theory for the total scalar curvature functional for Riemannian metrics and related topics''}, Lec. Notes Math. {\bf 1365} (1987), 120--154. 
\bibitem{S-Y} {R.~Schoen and S.-T.~Yau,} {\em ``On the structure of manifolds with positive scalar curvature''}, Manuscripta. Math. {\bf 28} (1979), 159--183.

\bibitem{t-1} {C.H.~Taubes}, {\em ``The {S}eiberg-{W}itten invariants and symplectic forms''}, Math. Res. Lett., {\bf 1} (1994), 809--822.
\bibitem{ta} {C.H.~Taubes}, {\em ``The {S}eiberg-{W}itten invariants and Gromov invariants''}, Math. Res. Lett. {\bf 2} (1995), 221--238. 
\bibitem{thor} J.A.~Thorpe, {\em ``Some remarks on the Gauss-Bonnet formula'',} J. Math. Mech. {\bf 18} (1969), 779--786. 

\bibitem{Usher} M. Usher, {\em ``Minimality and symplectic sums''}, Int. Math. Res. Not. 2006, Art. ID 49857, 17 pp. 

\bibitem{w} {E.~Witten,} {\em ``Monopoles and four-manifolds''}, Math. Res. Lett. {\bf 1} (1994), 809--822 
\bibitem{wu} {W.-T. Wu,} {\em ``Sur la structure presque complexe d'une vari{\'{e}}t{\'{e}} diff{\'{e}}rentiable r{\'{e}}elle de dimension 4''},  C. R. Acad. Sci. Paris, {\bf 227} (1948), 1076--1078.

\end{thebibliography}
\end{document}